\documentclass[12pt]{amsart}
\usepackage{upkg}

\title{Coxeter and Schubert combinatorics of $\mu$--Involutions}

\author{Jack Chen-An Chou}
\address{Jack Chen-An Chou, Department of Mathematics, University of Florida, Gainesville, FL 32611.}
\email{c.chou@ufl.edu}

\author{Zachary Hamaker}
\address{Zachary Hamaker, Department of Mathematics, University of Florida, Gainesville, FL 32611.}
\email{zhamaker@ufl.edu}

\begin{document}

\begin{abstract}
    The variety of complete quadrics is the wonderful compactification of $GL_n/O_n$ and admits a cell decomposition into Borel orbits indexed by combinatorial objects called $\mu$--involutions.
    We study Coxeter–theoretic properties of $\mu$--involutions with results including a combinatorial description for their atoms, an exchange lemma, and transposition-like operators that characterize their Bruhat order.
    The corresponding orbit closures can be realized inside the flag variety.
    In this setting, we study the cohomology representatives of these orbits, which are, up to a scalar, the $\mu$--involution Schubert polynomials.
    We expand $\mu$--involution Schubert polynomials as a multiplicity-free sum of $\nu$--involution Schubert polynomials when $\nu$ refines $\mu$ and provide recurrences analogous to Monk's rule for Schubert polynomials.
\end{abstract}

\maketitle

\section{Introduction}

The complete flag variety $\text{FL}_n = GL_n/B$ has a natural stratification given by Borel orbit closures, indexed by permutations, whose classes form a basis for its cohomology ring.
This ring is the modern setting for Schubert calculus, an area of algebraic geometry that studies enumerative aspects of intersection theory.
Schubert polynomials, introduced by Lascoux and Sch{\"u}tzenberger~\cites{ls82a}, represent the cohomology classes of these orbit closures and provide a combinatorial model for this theory.
Their rich structure can be seen through the lens of geometry, representation theory, and combinatorics.

Significant effort has been devoted to extending Schubert calculus beyond the setting of the Schubert stratification for $\text{Fl}_n$.
One important such extension is the symmetric space given by orthogonal group orbit closures on $\text{Fl}_n$, which are indexed by involutions in the symmetric group.
These were initially studied by Richardson and Springer~\cites{richardson1990bruhat,Richardson1993}, with polynomial representatives introduced by Wyser and Yong in~\cite{wyser2017polynomials}.
These representatives, divided by a predictable power of 2, are the \emph{involution Schubert polynomials}.
A general theorem of Brion~\cite{brion1998behaviour} shows involution Schubert polynomials are multiplicity-free sums of Schubert polynomials.
The terms in this sum, which we call \emph{atoms}, have a combinatorial description due to Can, Joyce and Wyser~\cite{CJW} (see also~\cite{HMP2}).
Hamaker, Marberg, and Pawlowski~\cite{hamaker2018transition} demonstrate a recurrence relation on involution Schubert polynomials analogous to Monk's rule.

Orthogonal orbits of $\text{Fl}_n$ can be viewed as Borel orbits on $GL_n/O_n$.
This latter space is not compact, but has a wonderful compactification as the variety of complete quadrics $\text{Q}_n$~\cite{de2006complete}.
In this paper, we study the Borel orbits of $\text{Q}_n$ and their realizations as subvarieties of $\text{Fl}_n$.
Passing from $GL_n/O_n$ to $\text{Q}_n$ introduces boundary strata corresponding to partial degenerations of a quadric, and these strata are indexed by compositions of $n$.
In the stratum indexed by $\mu = (\mu_1,\dots,\mu_k)$ the Borel orbits correspond to  $H_\mu$ orbits of $\text{Fl}_n$ where $H_\mu$ is the semidirect product of $O_{\mu_1} \times \dots \times O_{\mu_k}$ and the parabolic subgroup $P_\mu \subseteq GL_n$.
These orbits are indexed by \emph{$\mu$--involutions}, which are permutations $\pi$ expressible as a concatenation of blocks $B_1, \dots, B_k$ with $|B_i| = \mu_i$ such that each $B_i$ has the relative order of an involution.
Since $H_\mu$ interpolates between $O_n$ when $\mu = (n)$ and $B$ when $\mu = (1^n)$, $\mu$--involutions interpolate between involutions and permutations.

Orbit closure containment defines a Bruhat order on $\mu$--involutions as $\mu$ varies, which is a special case of a more general family of orders studied in~\cite{timashev1994generalization}.
For fixed $\mu$, there is also a weak order studied in~\cite{can2013weak}, which determines analogues of reduced expressions for $\mu$--involutions.
In~\cite{banerjee2016combinatorial}, the authors show Bruhat order on $\mu$--involutions is determined by subword containment (see Theorems~\ref{t:mu-bruhat-def} and~\ref{t:mu-nu-cover}).
Using this realization, we provide a novel characterization of Bruhat order on $\mu$--involutions using transposition-like operators $t^\mu_{ab}$ (see Section~\ref{ss:transposition} and specifically Theorem~\ref{t:mu-atom-transposition}).

The Brion results extend to this setting, so the classes of $H_\mu$ orbit closures in $\text{Fl}_n$ can be represented as sums of Schubert polynomials.
Again rescaling by a predictable power of 2, for each $\mu$--involution $\pi$ we obtain the \emph{$\mu$--involution Schubert polynomial} $\fS^\mu_\pi$ as a multiplicity-free sum of Schubert polynomials.
The set of atoms in this expansion is denoted $\mA_\mu(\pi)$.
These polynomials can also be defined using divided difference operators applied to $\fS^\mu_{w_0}$, which is computed geometrically as a product of monomials and binomials in~\cite{can2015wonderful} or as a sum over $\mA_\mu(w_0)$ in~\cite{can2013weak}.
See~\cite{Joy} for a summary of these results.

Our first main result is a combinatorial description of $\mA_\mu(\pi)$, generalizing the description of $\mA_\mu(w_0)$ from~\cite{can2013weak}.
The definition of atoms for involutions can be extended to blocks, with $\mA(B)$ the set of block atoms for $B$ (see Section~\ref{ss:mu-atoms} for details).
The description is more easily stated in terms of inverse atoms, the set of which is $\mA_\mu^{-1}(\pi)$.

\begin{thm}[= Theorem~\ref{thm: mu involution atoms}]
\label{t:main-1}
    For $\pi$ a $\mu$--involution with blocks $B_1, \dots,B_k$,
    \begin{equation*}
        \mA_\mu^{-1}(\pi) = \{Q_1 Q_2 \dots Q_k: Q_i \in \mA^{-1}(B_i) \text{ for } i\in [k]\}.
    \end{equation*}
    where $Q_1Q_2 \dots Q_k$ is the concatenation of $Q_1,Q_2,\dots,Q_k$.
\end{thm}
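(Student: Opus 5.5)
We plan to prove the identity by induction on the weak order for $\mu$--involutions, using the divided difference description of $\fS^\mu_\pi$. The atoms $\mA_\mu(\pi)$ are determined by the recursion coming from Brion's theorem: if $s_i$ is a simple transposition with $\pi \lessdot s_i \circ \pi$ in the $\mu$--weak order (the operation of~\cite{can2013weak}), then $\partial_i \fS^\mu_{s_i\circ\pi} = \fS^\mu_\pi$. Since $\fS^\mu_\pi$ is a multiplicity-free sum of Schubert polynomials and $\partial_i\fS_w = \fS_{ws_i}$ or $0$, this translates into a recursion on atoms. In inverse form it reads
\[
\mA^{-1}_\mu(\pi) = \{\, s_i w : w \in \mA^{-1}_\mu(s_i\circ\pi),\ \ell(s_iw)<\ell(w)\,\}.
\]
We will check the displayed formula at the top of the weak order and show that the right-hand side of Theorem~\ref{t:main-1} obeys the same descending recursion. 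Running from the top down is cleaner because $\mA_\mu(w_0)$ is known from~\cite{can2013weak}.

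\emph{Base case.} For the maximal $\mu$--involution (with $\mu$ fixed), we compare the description of $\mA_\mu(w_0)$ in~\cite{can2013weak} with the concatenation formula. Each block of the maximal element is the reverse-order block, so its block atoms are exactly the involution atoms of the longest element. The statement there should reduce directly to concatenating inverse atoms of $w_0$ for each block, after checking that the block value sets match.

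\emph{Inductive step.} We split into two cases according to where $i$ and $i+1$ sit.

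\textbf{Case 1: $i$ and $i+1$ in the same block $B_j$.} The operation $s_i\circ\pi$ acts only on $B_j$, and as an action on a standardized involution it is either conjugation or multiplication. The block-atom recursion is inherited from the involution case of~\cite{CJW,HMP2}: $\mA^{-1}(B_j)$ satisfies the same descent recursion within the positions of $B_j$. Left multiplication by $s_i$ on the concatenation $Q_1\cdots Q_k$ then affects only $Q_j$, which gives the claim.

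\textbf{Case 2: $i$ and $i+1$ in different blocks $B_j$ and $B_{j+1}$.} Here $s_i\circ\pi$ is just $s_i\pi$, which swaps the values $i,i+1$ lying in two different blocks. We must show that $s_iQ_j$ and $s_iQ_{j+1}$ combine correctly. The relevant observation is that block atoms depend only on the relative order within a block, so swapping values across blocks transports $\mA^{-1}(B_j)$ to $\mA^{-1}(s_iB_j)$. The length condition $\ell(s_iw)<\ell(w)$ becomes a condition on the relative positions of $i$ and $i+1$ in the word. Because $i$ lies in block $j$ and $i+1$ lies in block $j+1$ (or the reverse), this condition holds uniformly for every element of the concatenation set. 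It also holds exactly when $\pi\lessdot s_i\pi$, so the recursion matches.

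We need to be careful here about which convention (values versus positions) the blocks use. We will choose the convention so that the inverse atoms are concatenations of words, which is presumably why the theorem is stated for $\mA^{-1}$.

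\emph{Main obstacle.} The main obstacle is the recursion step itself: verifying that the descent condition holds for all or none of the concatenated atoms simultaneously. That is what keeps the sum multiplicity-free and exact rather than merely giving a containment. In Case 1 we must also confirm that the block version of the involution atom recursion holds when the block values are not $\{1,\dots,\mu_j\}$, which follows by standardization.

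Finally, we must ensure every $\mu$--involution is reached from the top element by such descending steps. We will take this from the weak order results of~\cite{can2013weak}, which state that every $\mu$--involution lies below the maximal one.
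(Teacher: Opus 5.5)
Your proof is correct, but it takes a different route from the paper's. The paper argues directly, without induction. It factors a $\mu$--atom using the parabolic decomposition of Proposition~\ref{p:parabolic} into two parts. The $S_\mu$ part, starting from $e$, can only act inside blocks. It therefore produces an intermediate $\tau$ whose blocks occupy intervals of values and whose inverse atoms are concatenations $Q_1'\cdots Q_k'$ of block inverse atoms. The minimal coset representative then only moves values between blocks, so it preserves each block's standardization and relabels $Q_1'\cdots Q_k'$ into $Q_1\cdots Q_k$.

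You instead induct downward from $w_0$ along the weak order, using the recursion $\mA_\mu^{-1}(\pi)=\{s_iw: w\in \mA_\mu^{-1}(\pi\muprod s_i),\ \ell(s_iw)<\ell(w)\}$ whenever $\pi\muprod s_i\neq\pi$. The within-block case reduces by standardization to the same recursion for involution atoms. The between-block case is the bijection $Q_j\mapsto s_iQ_j$, $Q_{j'}\mapsto s_iQ_{j'}$.

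What each route buys:
\begin{itemize}
\item Your route makes every step a local check and gives both containments at once. The paper leaves its key claims as assertions, namely that each letter of the coset part acts between blocks, and the reverse containment.
\item The paper's route needs no base case and exhibits the structure of an inverse $\mu$--atom explicitly. Yours imports exactly the $w_0$ case from \cite{can2013weak}, which is the case the theorem generalizes.
\end{itemize}

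Three things to tidy.

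First, drop Brion and $\partial_i$. With the paper's definition of $\fS^\mu_\pi$ as an atom sum, the identity $\partial_i\fS^\mu_{\pi\muprod s_i}=\fS^\mu_\pi$ is equivalent to your recursion, so deriving one from the other is circular. Prove the recursion from the monoid action instead:
\begin{itemize}
\item If $u\in\mA_\mu(\pi)$, then $us_i>u$ (otherwise $\pi\muprod s_i=\pi$) and $us_i\in\mA_\mu(\pi\muprod s_i)$.
\item For the converse, take $w\in\mA_\mu(\pi\muprod s_i)$ with $ws_i<w$. You need that $\pi$ is the only $\pi'\neq\pi\muprod s_i$ with $\pi'\muprod s_i=\pi\muprod s_i$; check this in both cases of the definition of $\muprod$.
\item You also need that a nontrivial step raises $\ell_\mu$ by exactly one. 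One way: $\ell_\mu(\pi)$ equals the sum of the involution lengths of the $\std(B_j)$, plus the number of value pairs $a<b$ with $b$ in an earlier block than $a$.
\end{itemize}

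Second, in Case 2 the two blocks need not be adjacent. Also, only the configuration with $i$ in an earlier block than $i+1$ in $\pi$ actually occurs.

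Third, the `all or none' property you call the main obstacle holds only in Case 2. In Case 1 the descent is not uniform. For the block step from $132$ to $321$ via $s_1$, only $231$ in $\mA^{-1}(321)=\{231,312\}$ has $2$ before $1$. This does not break your proof, because in Case 1 you rely on the involution recursion, not on uniformity.
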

Our proof of Theorem~\ref{t:main-1} adapts the proof from~\cite{can2013weak} to this more general setting.
As an application of this theorem, we expand $\mu$--involution Schubert polynomials as a multiplicity-free sum of $\nu$--involution Schubert polynomials when $\nu$ refines $\mu$ (see Theorem~\ref{t:mu-nu-expansion}).
This result generalizes Theorem~\ref{t:main-1} and allows for an improved description of Bruhat order for $\mu$--involutions when $\mu$ varies (see Corollary~\ref{c: intersect = contain}).

The other main result of this paper is a family of transition equations for $\mu$--involution Schubert polynomials, which is closely related to Monk's rule for Schubert polynomials.
The formula relies on the transposition operators $t_{ab}^\mu$ we define that describe the strong Bruhat cover relations of $\mu$--involutions.

\begin{thm}[= Theorem~\ref{t:mu-transition}]
\label{t:main-2}
    Let $\tau$ be a $\mu$ involution and $(i \geq j)$ a cycle in one of its blocks,
    \begin{align*}
        2^{- \delta(i,j)} (x_i+x_j)\fS_\tau^\mu =   \sum_{\sigma \in \Phi(\tau;i,j)}\fS_\sigma^\mu - \sum_{\sigma \in \Psi(\tau;i,j)}\fS_\sigma^\mu\, ,
    \end{align*}
    where 
    \begin{align*}
        \Phi(\tau;i,j) = \{t^\mu_{ab}(\tau) \neq \tau\, : \,  a \in \{i,j\}\},\quad \Psi(\tau;i,j) = \{t^\mu_{ab}(\tau) \neq \tau \, : \,  b \in \{i,j\}\}.
    \end{align*}
\end{thm}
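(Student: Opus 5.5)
We plan to deduce Theorem~\ref{t:main-2} from the involution transition formula of Hamaker, Marberg and Pawlowski~\cite{hamaker2018transition}, applied blockwise and combined with the atom decomposition of Theorem~\ref{t:main-1}. The guiding idea is that the cycle $(i\ge j)$ lives inside a single block $B_r$ of $\tau$. The variables $x_i,x_j$ then only interact with the positions occupied by that block and with positions outside it through the parabolic structure. We write $\fS^\mu_\tau=\sum_{w\in\mA_\mu(\tau)}\fS_w$. By Theorem~\ref{t:main-1}, the inverse atoms are exactly the concatenations $Q_1\cdots Q_k$ with $Q_s\in\mA^{-1}(B_s)$.

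The first step is to multiply each term $\fS_w$ by $(x_i+x_j)$ and expand using Monk's rule $x_p\fS_w=\sum_{b>p}\fS_{wt_{pb}}-\sum_{a<p}\fS_{wt_{ap}}$, where only covers $\ell(wt)=\ell(w)+1$ appear. Summed over atoms, this gives a signed sum of Schubert polynomials indexed by covers of atoms. The second step is to regroup this sum. In the involution case, \cite{hamaker2018transition} shows that after cancellation the surviving terms reorganize into atom sets $\mA(\sigma)$ for $\sigma$ ranging over the involution analogues of $\Phi$ and $\Psi$. The factor $2^{-\delta(i,j)}$ corrects for the doubled contribution when $i=j$ is a fixed point.

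We expect the bulk of the cancellation to happen within the block $B_r$, by literally transporting the HMP argument through the concatenation structure: inverse atoms factor, and right multiplication by $t_{pq}$ acts on positions. The new phenomenon is the terms where $t_{pq}$ swaps a value of block $B_r$ with a value in another block $B_s$. These should correspond exactly to the operators $t^\mu_{ab}$ that move between blocks, as defined in Section~\ref{ss:transposition} and characterized in Theorem~\ref{t:mu-atom-transposition}. For those terms we would show two things: each cover of an atom of $\tau$ that leaves the block product is an atom of a unique $t^\mu_{ab}(\tau)$, and every atom of $t^\mu_{ab}(\tau)$ arises exactly once.

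The main obstacle is this bijective bookkeeping. We must show that the multiset of signed covers $\{wt_{pq}\}$, after cancellation, equals $\bigsqcup_{\sigma\in\Phi}\mA_\mu(\sigma)$ minus $\bigsqcup_{\sigma\in\Psi}\mA_\mu(\sigma)$, with no leftover terms and multiplicity one. An alternative that may be cleaner is induction via divided differences. The base case is $\tau=w_0$ (or the maximal $\mu$-involution), using the explicit product formula of~\cite{can2015wonderful}. One would then descend along weak order, commuting $\partial_s$ past $(x_i+x_j)$ via $\partial_s(fg)=\partial_s(f)g+(s f)\partial_s g$ and checking that the index sets $\Phi,\Psi$ transform compatibly under the weak-order action. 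I would try the atom/Monk approach first, using Theorem~\ref{t:mu-atom-transposition} to identify the cross-block terms, and fall back on the divided-difference induction if the cancellation becomes unwieldy.
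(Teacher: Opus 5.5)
Your first route is the paper's route: expand $(x_i+x_j)\fS_v$ for each $v\in\mA_\mu(\tau)$ by Theorem~\ref{t:monk} into a positive part $A$ (covers $vt_{ab}$, $a<b$, $a\in\{i,j\}$) and a negative part $C$ (covers $vt_{ab}$, $a<b$, $b\in\{i,j\}$). Let $A'$ and $C'$ be the unions of $\mA_\mu(\sigma)$ over $\sigma\in\Phi(\tau;i,j)$ and $\sigma\in\Psi(\tau;i,j)$; by Theorem~\ref{t:mu-atom-transposition} these sit inside $A$ and $C$, and the remaining terms must cancel. The step you defer as ``bijective bookkeeping'' is the whole content of the proof, and ``transporting the HMP argument'' does not say which terms cancel against which.

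The missing ingredient is the exchange property for $\mu$--atoms, Corollary~\ref{c:strong-mu-exchange}. Suppose $v\in\mA_\mu(\tau)$ and $v\lessdot w=vt_{ab}$ with $w$ \emph{not} a $\mu$--atom. Then:
\begin{itemize}
\item[(a)] $a$ and $b$ lie in the same block of $\tau$ (Lemma~\ref{l:nearly-reduced-same-block}). Hence every cross-block cover of a $\mu$--atom is automatically an atom of $t^\mu_{ab}(\tau)$. This is the claim your third paragraph asserts but does not justify.
\item[(b)] There is a unique $u\in\mA_\mu(\tau)$, $u\neq v$, and $a'<b'$ with $w=ut_{a'b'}$ and $b'\in\{a,y_\tau(a)\}$. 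This comes from applying Theorem~\ref{t:atoms-exchange} inside that one block and reassembling with Theorem~\ref{t:main-1}.
\end{itemize}

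Taking $a\in\{i,j\}$ in (b) gives $b'\in\{i,j\}$. So every non-atom term of $A$ is also a non-atom term of $C$ and conversely, i.e.\ $A\setminus A'=C\setminus C'$. The spurious terms cancel \emph{between} the positive and negative sums, not within either one. This is exactly why the theorem needs the symmetric factor $x_i+x_j$ over a whole cycle: the exchange can move the transposition from $i$ to its partner $y_\tau(i)$. For a single $x_i$ with $i$ not a fixed point, the cancellation fails in general (see Example~\ref{e: transition}).

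To make the bookkeeping rigorous you also need two further points:
\begin{itemize}
\item $A$ and $C$ have no repeated elements. The paper derives this from the fact that $i$ precedes $j$ in every inverse atom.
\item You must replace $\tau$ by $\tau'\in\mI_{\mu'}$, with $n+1$ as a singleton block. Monk's rule produces covers $vt_{a,n+1}$, and these are indexed only by $t^\mu_{a,n+1}(\tau')$.
\end{itemize}
With these pieces the atom/Monk approach goes through, and the divided-difference fallback is unnecessary. As you stated it, that fallback also gives no control over how $\Phi$ and $\Psi$ change under $\partial_s$.
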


A key technical tool in our proof of Theorem~\ref{t:main-2} is a novel exchange lemma like result for reduced expressions of $\mu$--involutions (see Theorem~\ref{t:mu-exchange}) generalizing an analogous result for involutions from~\cite{hamaker2018transition}.

\section*{Acknowledgements}
We thank Mahir Can, Yibo Gao, Allen Knutson, Eric Marberg, Alexander Postnikov, Linus Setiabrata, Jiayi Wen, Dora Woodruff, and Alexander Yong for helpful comments.
Both authors were partially supported by NSF Grant DMS-2054423.


\section{Background}

We first introduce some notation. Let $[n] = \{1,2,\dots,n\}$. For the rest of the paper, we will use $w,u,v$ to denote permutations, $y,z$ to denote involutions, $\pi, \tau, \sigma$ to denote $\mu$-involutions, and $\mu, \nu$ to denote compositions. Recall $\mu = (\mu_1,\dots,\mu_k)$ is a \newword{composition} of $n$ with \newword{length} $\ell(\mu) = k$, denoted $\mu \vDash n$, if $\sum_{i=1}^k \mu_i = n$ and $\mu_i \in [n]$ for all $i$.
For $\mu,\nu \vDash n$, we say $\mu$ \newword{refines} $\nu$ if
\[
\left\{\sum_{i=1}^j \mu_i: j \in [\ell(\mu)]\right\} \supseteq \left\{ \sum_{i=1}^j \nu_i : j \in [\ell(\nu)]\right\}.
\]
For example, $(3,2,3)$ refines $(5,3)$, $(3,5)$ and $(8)$.

\subsection{Permutations}
\label{ss:perm}
For $A \subseteq \NN$ finite, let $S_A$ be the set of permutations of $A$ and $S_n = S_{[n]}$.
We typically write permutations in one-line notation--- for $Q \in S_A$ write $Q = Q(1)\dots Q(k)$ with $Q(i) \in A$ distinct for each $i \in [k]$.
The \newword{standardization} of $Q$ is the permutation $\std(Q) \in S_{|A|}$ whose one-line notation has the same relative order as $Q$.
For $Q \in S_A$ and $a \in A$, let $Q^{-1}(a)$ be the index $i$ so that $Q(i) = a$.
Then the \newword{relative value} of $a$ in $Q$ is $r_Q(a) = \std(Q)(Q^{-1}(a))$.
For example, with $Q = 63274$ we have $\std(Q) = 42153$ so $Q^{-1}(4) = 5$ and $r_Q(4) = 3$.

When $n$ is clear, let $e = 12\dots n$ and $w_0 = n \dots 21$ be the identity and reverse permutations in $S_n$.
Let $t_{ij}$ denote the transposition $(i,j)$ and $s_i = (i,i{+}1)$.
Recall for $w \in S_n$ that $wt_{ij}$ is the same as $w$ except that $w(i)$ and $w(j)$ are swapped, while $t_{ij}w$ and $w$ are the same except the positions containing $i$ and $j$ are swapped.
For example, with $w = 4231$ we have $wt_{13} = 3241$ and $t_{13}w = 4213$.

Viewing permutations as functions, $S_n$ is a Coxeter group with presentation
\[
S_n = \langle s_1,\dots,s_{n-1} \mid s_i^2 = 1, s_i s_j = s_j s_i\ \mathrm{for}\ |i-j|>1,\ s_is_{i+1}s_i = s_{i+1}s_is_{i+1} \ \mathrm{for}\ i \in [n-2]\rangle.
\]
For $w \in S_n$, an expression $w = s_{a_1} \dots s_{a_p}$ is \newword{reduced} if it is of minimal length with associated \newword{reduced word} $\bfa = (a_1,\dots,a_p)$.
The \newword{length} $\ell(w)$ is then the length of a reduced word for $w$.
Let $\mR(w)$ be the set of reduced words for $w$.
Expressions for $w$ satisfy:
\begin{lem}[Deletion Property, \cite{bjorner2005combinatorics}*{Prop.~1.4.7}]
    \label{l:perm-exchange}
    For $w = s_{a_1} \dots s_{a_p} \in S_n$ so that the word $(a_1,\dots,a_p)$ is not reduced, there exists $k,m \in [p]$ so that $w  = s_{a_1} \dots \widehat{s_{a_k}} \dots \widehat{s_{a_m}} \dots  s_{a_p}$.
\end{lem}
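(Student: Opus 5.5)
The plan is to derive the Deletion Property from the Exchange Property for $S_n$. I will first establish that property by proving its equivalent form, the \emph{strong exchange condition}. Recall that the reflections of $S_n$ are the transpositions $t_{ij}$. The key lemma I would prove is the following: for $w \in S_n$ and any reflection $t = t_{ij}$ with $i<j$, we have $\ell(wt) < \ell(w)$ if and only if $w(i) > w(j)$. Here $\ell(w)$ is identified with the inversion number $\mathrm{inv}(w) = \#\{(a<b) : w(a) > w(b)\}$. The identification $\ell = \mathrm{inv}$ is routine. Right multiplication by $s_k$ swaps the entries in positions $k$ and $k+1$, so it changes $\mathrm{inv}$ by exactly $\pm 1$. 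This gives $\ell \ge \mathrm{inv}$. Sorting by adjacent swaps, as in bubble sort, realizes $\mathrm{inv}(w)$ steps, so $\ell = \mathrm{inv}$. The lemma then follows by counting inversions. Swapping $w(i) > w(j)$ removes the inversion $(i,j)$. For each position $k$ strictly between $i$ and $j$, the pairs $(i,k)$ and $(k,j)$ together lose at least as many inversions as they gain. So $\mathrm{inv}$ strictly decreases.

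Next, I deduce the exchange property. Let $w = s_{a_1}\cdots s_{a_p}$ with $\ell(ws_a) < \ell(w)$. Set $t_k = s_{a_p} s_{a_{p-1}} \cdots s_{a_{k+1}} s_{a_k} s_{a_{k+1}} \cdots s_{a_p}$; each $t_k$ is a transposition. Multiplying out gives
\[
s_{a_1}\cdots \widehat{s_{a_k}} \cdots s_{a_p} = w\, t_k .
\]
The claim is that $t := s_a$ equals some $t_k$; this then gives $ws_a = s_{a_1}\cdots \widehat{s_{a_k}}\cdots s_{a_p}$. I would prove the claim through the standard ``descent tracking'' argument. Consider the partial products $w_m = s_{a_1}\cdots s_{a_m}$ for $m = 0,\dots,p$. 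Since $w_0 = e$ has no inversions, the pair of positions swapped by $t$ cannot be inverted in $w_0$. By the lemma, this pair is inverted in $w = w_p$. Conjugating $t$ back along the word tracks this pair of positions, and at some step $k$ it must be exactly the pair swapped by $s_{a_k}$. This means $t = t_k$.

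Finally, I deduce the Deletion Property. Suppose $(a_1,\dots,a_p)$ is not reduced. Choose the smallest $m$ such that $(a_1,\dots,a_m)$ is not reduced. Then $u = s_{a_1}\cdots s_{a_{m-1}}$ is written reducedly, and $\ell(u s_{a_m}) < \ell(u)$, since the length changes by exactly $\pm1$. Apply the exchange property to $u$ with the reduced word $(a_1,\dots,a_{m-1})$ and $s = s_{a_m}$. This gives some $k < m$ with
\[
u s_{a_m} = s_{a_1}\cdots\widehat{s_{a_k}}\cdots s_{a_{m-1}}.
\]
Multiplying both sides on the right by $s_{a_m}$ gives $u = s_{a_1}\cdots\widehat{s_{a_k}}\cdots s_{a_{m-1}} s_{a_m}$. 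Comparing with $u = s_{a_1}\cdots s_{a_{m-1}}$ shows that deleting the letters $k$ and $m$ does not change the product $s_{a_1}\cdots s_{a_m}$. Appending $s_{a_{m+1}}\cdots s_{a_p}$ to both sides gives $w = s_{a_1}\cdots\widehat{s_{a_k}}\cdots\widehat{s_{a_m}}\cdots s_{a_p}$, which is the Deletion Property.

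The main obstacle is making the tracking step of the exchange argument precise. I would handle it with the inversion-set formulation. Write $N(w) = \{ t \text{ a transposition} : \ell(wt) < \ell(w) \}$. The standard identity
\[
N(w s_a) = s_a N(w) s_a \,\triangle\, \{s_a\}
\]
can be verified directly from the lemma. Iterating this identity shows that $N(w) = \{t_1, \dots, t_p\}$ as a multiset taken mod 2. Therefore any $t \in N(w)$ equals some $t_k$, which is exactly the claim needed in the exchange step.
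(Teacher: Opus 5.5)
Your proposal is correct and is essentially the argument behind the result the paper cites (the paper gives no proof of its own). Bj\"orner--Brenti obtain Deletion from the Exchange Property by passing to the first prefix whose length drops, as you do. Their Exchange Property comes from the reflection-parity statement, which in $S_n$ is your identity $N(w)=\{t_1,\dots,t_p\}$ mod $2$ derived from $N(ws)=sN(w)s\,\triangle\,\{s\}$. Your only real departure is specializing to $S_n$: the concrete identification of $N(w)$ with inverted position pairs replaces their abstract argument that the parity is independent of the chosen word. That identification uses your inversion-counting lemma in both directions; the converse direction you did not write out follows by applying the lemma to $wt$.
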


We make extensive use of two partial orders on $S_n$.
The first is \newword{(strong) Bruhat order} $(S_n,\leq)$, where $v \leq w$ if every $\bfa \in \mR(w)$ has a subword in $\mR(v)$.
The second is \newword{weak (Bruhat) order} $(S_n,\leq_W)$ where $v \leq w$ if some $\bfa \in \mR(w)$ has a prefix in $\mR(v)$.
Both partial orders are graded with rank function $\ell$.
The cover relations for these partial orders are $v \lessdot w$ if $w = v t_{ij}$ and $\ell(w) = \ell(v) + 1$ and $v \lessdot_W w$ if $w = v s_i > w$ for some $i \in [n{-}1]$.

A \newword{parabolic subgroup} of $S_n$ is one generated by a subset of the simple transpositions.
In particular, for $\mu = (\mu_1,\dots,\mu_k) \vDash n$ we have the parabolic subgroup $S_\mu = S_{\mu_1} \times \dots \times S_{\mu_k}$; here we identify $S_{\mu_i}$ with $\langle s_{m_i + 1}, \dots, s_{m_i +\mu_{i}-1}\rangle$ where $m_i = \mu_1 + \dots + \mu_{i-1}$ (so $m_1 = 0)$.
The \newword{parabolic quotient} $S^\mu$ is the set of minimal length coset representatives for $S_\mu$.
We have:
\begin{prop}[{\cite{bjorner2005combinatorics}*{Prop.~2.4.4}}]
\label{p:parabolic}
    For $\mu \vDash n$ and $w \in S_n$ there exist unique $u \in S_\mu$, $v \in S^\mu$ so that $w = uv$.
    Furthermore $\ell(w) = \ell(u) + \ell(v)$.
\end{prop}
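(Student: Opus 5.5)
The plan is to argue directly in one-line notation rather than through general Coxeter machinery, since both $S_\mu$ and $S^\mu$ have transparent combinatorial descriptions. For $i \in [\ell(\mu)]$ let $I_i = \{m_i+1,\dots,m_i+\mu_i\}$ be the $i$th block of values. Since $S_\mu$ is generated by the $s_j$ with $j, j{+}1$ in a common $I_i$, left multiplication $w \mapsto uw$ by $u \in S_\mu$ permutes the entries of $w$ within each value block $I_i$. It fixes the set of positions occupied by each block. The first step is to identify $S^\mu$. Using the standard fact that $\ell(s_j v) > \ell(v)$ exactly when $j$ appears to the left of $j{+}1$ in the one-line notation of $v$, the minimal length representatives of the cosets $S_\mu w$ should be exactly the permutations $v$ whose entries from each block $I_i$ appear in increasing order from left to right. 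I would prove this directly: each coset $S_\mu w$ contains exactly one such $v$, obtained from $w$ by rewriting the entries lying in $I_i$, in the positions they occupy, as $m_i+1,\dots,m_i+\mu_i$ in increasing order. The length computation below then shows this $v$ is the unique element of minimal length in its coset.

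Existence and uniqueness follow from this. Given $w$, define $v$ by the rewriting just described and set $u = w v^{-1}$. Then $u$ sends the entries of $v$ to the entries of $w$ position by position, and so maps each $I_i$ to itself; hence $u \in S_\mu$. For uniqueness, suppose $w = uv$ with $u \in S_\mu$ and $v$ increasing on each value block. Then $v$ occupies the same positions with each block as $w$ does, and is increasing there, so $v$ is forced. Consequently $u = wv^{-1}$ is forced as well.

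For the length identity I count inversions. Take positions $p<q$, and split by whether $v(p)$ and $v(q)$ lie in the same block. If they lie in different blocks $I_i \neq I_{i'}$, then since $u$ preserves blocks and the blocks are intervals ordered by index, $w(p) > w(q)$ iff $v(p) > v(q)$. Inversions of this type are therefore shared by $w$ and $v$, and $v$ has no others, because within a block $v$ is increasing. If $v(p),v(q) \in I_i$, then $v(p) < v(q)$, and $(p,q)$ is an inversion of $w = uv$ iff $u(v(p)) > u(v(q))$. This sets up a bijection between such pairs and the inversions of $u$, whose inverted values always lie in a common block. Summing the two cases gives $\ell(w) = \ell(u) + \ell(v)$. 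In particular $\ell(v) \le \ell(w)$ for every $w$ in the coset, with equality only at $u = e$, which confirms that $v \in S^\mu$.

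The main obstacle is bookkeeping of conventions rather than any real difficulty. One must check that with $u$ on the left, $S^\mu$ is the set of representatives of right cosets $S_\mu w$ and that $S_\mu$ acts on values. Getting this backwards would swap positions and values throughout the argument.
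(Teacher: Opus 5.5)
Your argument is correct, but it takes a different route from the paper. The paper gives no proof of its own and simply cites \cite{bjorner2005combinatorics}. There the factorization is proved for an arbitrary Coxeter group $W$ and parabolic subgroup $W_J$ by general Coxeter-group arguments: one takes a minimal-length element of a coset and uses the exchange property to get length additivity, with no concrete model of the group. You instead stay inside $S_n$ and make everything explicit. You identify $S^\mu$ as the permutations whose values from each block $\{m_i+1,\dots,m_i+\mu_i\}$ appear in increasing order. You obtain $v$ from $w$ by sorting each block's values into the positions they occupy, and set $u=wv^{-1}$. You then get $\ell(w)=\ell(u)+\ell(v)$ by splitting the inversions of $w$ into between-block pairs, which are exactly the inversions of $v$, and within-block pairs, which are in bijection with the inversions of $u$. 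The general argument buys uniformity over all Coxeter groups. Yours is self-contained and elementary, and as a byproduct gives a concrete one-line description of $S^\mu$ and the within-block/between-block split of inversions, which is the useful picture when working with blocks of $\mu$--involutions. Your closing remark on conventions is also apt. The paper's definition of $S^\mu$ does not specify a side, and only the right-coset reading (minimal representatives of $S_\mu w$) makes the factorization $w=uv$ with $u\in S_\mu$ on the left hold. This is the mirror image under $w\mapsto w^{-1}$ of the more common formulation $w=w^Jw_J$.
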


The \newword{0--Hecke monoid} is the unique monoid $(S_n,\circ)$ satisfying
\[
w \circ s_i = \begin{cases}
    w s_i & w s_i > w,\\
    w & ws_i < w.
\end{cases}
\]
This monoid is obtained from the Coxeter presentation of $S_n$ by replacing the relation $s_i^2 = 1$ with $s_i^2 = s_i$.
We say $\bfa = (a_1,\dots,a_p)$ is a \newword{word} for $w$ if $w = s_{a_1} \circ \dots \circ s_{a_p}$, and let $\mH_k(w)$ be the set of all words of length $k$ for $w$.
Note $\mR(w) = \mH_{\ell(w)}(w)$.

\subsection{Involutions}

Let $\mI_n = \{y \in S_n: y^{-1} = y\}$ be the set of involutions in $S_n$.
For $y \in \mI_n$, the \newword{cycles} of $y$ are elements in the set of ordered pairs
\begin{align*}
    \Cyc(y) = \{(j,i) \in [n] \times [n]\, |\, y(i) = j\, \text{ and }i \leq j\}.
\end{align*}
Note that fixed points correspond to terms $(i,i) \in \Cyc(y)$.
We say the cycles $(i,j),(k,l) \in \Cyc(y)$ \newword{cross} if $j < l < i < k$ or $l < j < i < k$ and \newword{nest} if $j < l \leq k <i$ or $l < j \leq i < k$.

For $w \in S_n$ and $y \in \mI_n$, the 0--Hecke monoid acts on $\mI_n$ by $y \iprod w = w^{-1} \circ y \circ w$~\cite{richardson1990bruhat}.
Bruhat order restricts to a graded poset $(\mI_n,\leq^\mI)$ with rank function 
\[
\ellhat(y) = \frac{1}{2}(\ell(y)+n - |\Cyc(y)|).
\]
The \newword{weak order for involutions} $(\mI_n,\leq^\mI_W)$ is the partial order with $y 
\leq^\mI_W z$ if $z = y \iprod w$ for some $w \in S_n$.
In~\cite{incitti2004bruhat}, Incitti characterized the cover relations of $\leq^\mI$ combinatorially in terms of operators $t^\mI_{ij}$, which mimic the action of transpositions on permutations.
For $y \in \mI_n$ with $(b,a),(d,c) \in \Cyc(y)$ and $i,j \in \{a,b,c,d\}$, let $t^\mI_{ij}(y)$ differ from $y$ on the set $\{a,b,c,d\}$ by modifying $\Cyc(y)$ at these values as depicted in Figure~\ref{fig:I-bruhat-cover} and fixing $y$ otherwise.
Then $y \lessdot^\mI z$ if and only if $\ellhat(z) = \ellhat(y) +1$ and $z = t^\mI_{ij}(y)$ for some $i<j$.
Note our conventions follow~\cite{hamaker2018transition} and  differ slightly from~\cite{incitti2004bruhat}.

\begin{figure}
    \centering
    \[
\ba
t^\mI_{ij} \Bigl(\arcstart
{
*{.}     & *{.}
}
\arcstop \Bigr)
&=
\arcstart
{
*{.}   \arc{.6}{r}  & *{.}
}
\arcstop
&&\text{for }(i,j) =(a,c)
\\
t^\mI_{ij}  \Bigl( \arcstart
{
*{.}    \arc{.6}{r}  & *{.} & *{.}
}
\arcstop\Bigr)
&=
 \arcstart
{
*{.}    \arc{.8}{rr}  & *{.} & *{.}
}
\arcstop
&&\text{for }(i,j) \in \{ (b,c), (a,c)\}
\\
t^\mI_{ij}  \Bigl( \arcstart
{
*{.}     & *{.}  \arc{.6}{r} & *{.}
}
\arcstop\Bigr)
&=
 \arcstart
{
*{.}    \arc{.8}{rr}  & *{.} & *{.}
}
\arcstop
&&\text{for }(i,j) \in \{ (a,d), (a,c)\}
\\
t^\mI_{ij}  \Bigl( \arcstart
{
*{.}  \arc{.6}{r}   & *{.}    & *{.} \arc{.6}{r}& *{.}
}
\arcstop\Bigr)
&=
\arcstart
{
*{.}  \arc{.8}{rr}   & *{.} \arc{.8}{rr}   & *{.} & *{.}
}
\arcstop
&&\text{for }(i,j) =(b,c)
\\
t^\mI_{ij}  \Bigl( \arcstart
{
*{.}  \arc{.6}{r}   & *{.}    & *{.} \arc{.6}{r}& *{.}
}
\arcstop\Bigr)
&=
\arcstart
{
*{.}  \arc{.8}{rrr}   & *{.}    & *{.} & *{.}
}
\arcstop
&&\text{for }(i,j) \in \{ (a,c),(b,d),(a,d)\}
\\
t^\mI_{ij} \Bigl( \arcstart
{
*{.}  \arc{.8}{rr}   & *{.} \arc{.8}{rr}   & *{.} & *{.}
}
\arcstop\Bigr)
&=
\arcstart
{
*{.}  \arc{.8}{rrr}   & *{.}  \arc{.4}{r}  & *{.} & *{.}
}
\arcstop
&&\text{for }(i,j) \in \{ (a,d), (b,d), (a,d)\}
\ea
\]
    \caption{The action of $t^\mI_{ij}$ on $(b,a),(d,c) \in \Cyc(y)$ with $a < c$.
    When $a = b$ or $c=d$, we only use the earlier letter.}
    \label{fig:I-bruhat-cover}
\end{figure}

An \newword{involution word} for $y \in \mI_n$ is $\bfa = (a_1,\dots,a_p)$ so that $y = s_{a_1} \iprod s_{a_2} \iprod \dots \iprod s_{a_p}$ (note the action is not a product, so must be performed from left to right).
An involution word is \newword{reduced} if it is minimal length, or equivalently if $p = \ellhat(y)$.
Let $\hat{\mR}(y)$ be the set of reduced involution words.
The \newword{atoms} for $y \in \mI_n$ are the permutations in the set
\[
\mA(y) = \{w \in S_n: w^{-1} \circ w = y,\ \ell(w) = \ellhat(y)\}.
\]
Since $\iprod$ is a 0--Hecke monoid action, we see for $\bfa \in \hat{\mR}(y)$ that $\bfa \in \mR(w)$ for some $w \in \mA(y)$, and furthermore that 
\[
\hat{\mR}(y) = \bigcup_{w \in \mA(y)} \mR(w).
\]
There is a surprisingly simple combinatorial description of atoms.

\begin{thm}[{\cite{HMP2}*{Cor 5.13}; see also \cite{CJW}*{Thm 2.5}}]
    \label{thm: involution atoms}
    For $y \in \mathcal{I}_n$, $w \in \mathcal{A}(y)$ iff:
    \begin{enumerate}
        \item If $(i,j) \in \mathrm{Cyc}(y)$, then $w(i) \geq w(j)$.
        \item If $(i,j) \in \mathrm{Cyc}(y)$, then there does not exist $i < k < j$ such that $w(i) > w(k) > w(j)$.
        \item If $(i,j), (k,l) \in \Cyc(y)$ with $i < k$ and $j < l$, then 
        $w(k) \geq w(l) > w(i) \geq w(j)$.
    \end{enumerate}
\end{thm}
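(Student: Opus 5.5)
We prove this by induction on $\ellhat(y)$, comparing the recursive structure of $\mA(y)$ with that of the set $\mathcal{B}(y)$ of permutations satisfying conditions (1)--(3). The point is to show that both sets obey the same recursion under right multiplication by simple transpositions.

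\emph{Base case.} For $y = e$ we have $\mA(e) = \{e\}$. In $\mathcal{B}(e)$ every cycle is a fixed point $(i,i)$. So condition (3) applied to $(i,i),(k,k)$ with $i<k$ forces $w(k) > w(i)$. Hence $w$ is increasing, $w = e$, and $\mathcal{B}(e) = \{e\}$.

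\emph{Recursion for atoms.} Suppose $w = vs_i$ with $\ell(w) = \ell(v)+1$. Associativity of the $0$--Hecke product gives
\[
w^{-1}\circ w = s_i \circ (v^{-1}\circ v)\circ s_i = (v^{-1}\circ v)\iprod s_i.
\]
Also $\ellhat$ increases by at most one under $\iprod s_i$. Therefore $w \in \mA(y)$ exactly when $v \in \mA(z)$ for some $z$ with $z \iprod s_i = y$ and $\ellhat(y) = \ellhat(z)+1$. Here $z$ is $s_iys_i$ if that differs from $y$, and $ys_i$ otherwise. This is the usual identity $\hat{\mR}(y) = \bigcup_{w\in\mA(y)}\mR(w)$ read one letter at a time. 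It therefore suffices to prove two statements.
\begin{enumerate}
\item[(a)] If $w \in \mathcal{B}(y)$, $y \ne e$, and $i$ is a right descent of $w$ ($w(i) > w(i+1)$), then $ws_i \in \mathcal{B}(z)$ for the appropriate $z$ with $z\iprod s_i = y$ and $\ellhat(z) = \ellhat(y)-1$.
\item[(b)] If $v \in \mathcal{B}(z)$, $v(i)<v(i+1)$, and $y = z\iprod s_i$ satisfies $\ellhat(y) = \ellhat(z)+1$, then $vs_i \in \mathcal{B}(y)$.
\end{enumerate}
For (a), every non-identity $w$ has a descent, and any descent $i$ of $w\in\mathcal{B}(y)$ should automatically be a descent of $y$ in the involution sense. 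So (a) and (b) together with induction give $\mathcal{B}(y) = \mA(y)$.

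\emph{Case analysis.} Both (a) and (b) are proved by a case analysis on how $i$ and $i+1$ sit in $\Cyc(y)$. The possibilities are:
\begin{itemize}
\item $(i+1,i)$ is itself a cycle;
\item $i$ and $i+1$ are both fixed points;
\item one is a fixed point and the other lies in a $2$-cycle;
\item they lie in two distinct $2$-cycles that are crossing, nested, or disjoint, with $i,i+1$ occupying adjacent endpoints.
\end{itemize}
In each case we read off $z$ from Figure~\ref{fig:I-bruhat-cover}-style local pictures. Only the values of $w$ at positions $i,i+1$ change, so conditions (1)--(3) need only be rechecked for cycles and triples involving $i$ or $i+1$. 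For instance, if $i,i+1$ are fixed points of $y$, condition (3) forces $w(i)<w(i+1)$, so $i$ cannot be a descent. If $(i+1,i)\in\Cyc(y)$, then $z = ys_i$ has both as fixed points, and the descent condition is exactly what condition (1) gives.

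\emph{Main obstacle.} The hardest part is the two-distinct-$2$-cycle cases of (a). Here one must show two things. First, a descent of $w$ at $i$ forces the configuration in which conjugation by $s_i$ lowers $\ellhat$: for example, it rules out the case where the two cycles would become crossing from non-crossing in the wrong direction. Second, condition (3) is preserved for all other cycles. I would handle this by first using conditions (2) and (3) to pin down the relative order of $w$ on the four endpoints. This leaves only the orderings compatible with $w(i) > w(i+1)$, and each surviving ordering is then checked directly against the local picture of $s_iys_i$.
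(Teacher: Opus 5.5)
The paper gives no proof of this statement; it is quoted from \cite{HMP2} and \cite{CJW}. So there is no in-paper argument to compare with, and your weak-order induction is a self-contained alternative. It is a sound one, though the case check that carries the content is sketched rather than done.

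The reduction is correct. When $\ell(vs_i)=\ell(v)+1$ you have $w^{-1}\circ w=s_i\circ(v^{-1}\circ v)\circ s_i$. Combined with $\hat\ell(v^{-1}\circ v)\le\ell(v)$ (each letter raises $\hat\ell$ by at most one), this gives exactly your recursion. Statements (a), (b) and induction on $\hat\ell(y)$ then yield $\mathcal{B}(y)=\mathcal{A}(y)$.

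The two facts you leave implicit are one-liners. First, $e\notin\mathcal{B}(y)$ for $y\neq e$ by (1). Second, suppose $y(i)<y(i+1)$, and write the distinct cycles through $i$ and $i+1$ as $(p,q)$ and $(r,s)$ with $p\le q$, $r\le s$. Then $p<r$ and $q<s$, so (3) forces $w(i)<w(i+1)$. Hence every descent of $w\in\mathcal{B}(y)$ is a descent of $y$, and the ``appropriate $z$'' in (a) exists.

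Citing the literature buys the paper brevity. Your route buys an elementary proof that uses only the $0$--Hecke recursion. Its cost is the case check, which I verified does go through.

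That check is smaller than your ``main obstacle'' suggests. Suppose $z=s_iys_i\neq y$. Then $\mathrm{Cyc}(z)$ is $\mathrm{Cyc}(y)$ with the labels $i,i+1$ exchanged, and the orientation of each cycle is kept because $\{i,i+1\}$ is not a cycle. Also $v=ws_i$ satisfies $v(s_i(x))=w(x)$. So every instance of (1)--(3) for $(z,v)$ is the relabelled instance for $(y,w)$, except where the hypothesis compares positions $i$ and $i+1$ directly. The exceptions are:
\begin{itemize}
\item condition (3) for the pair of cycles through $i$ and $i+1$;
\item condition (2) for a cycle with one endpoint in $\{i,i+1\}$ and $k$ equal to the other.
\end{itemize}
In particular, nothing about ``condition (3) for all other cycles'' needs checking.

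Five configurations remain: two nested-to-crossing, one crossing-to-side-by-side, and two with a fixed point next to a $2$-cycle endpoint. Each is settled by (1), by (2) at $k\in\{i,i+1\}$, and by $w(i)>w(i+1)$. For example, let $(i,d),(i+1,b)$ be cycles of $y$ with $i+1<b<d$. Condition (2) at $k=i+1$ together with the descent gives $w(i+1)<w(d)$. This is the only inequality of (3) for the cycles $(i,b),(i+1,d)$ of $z$ not already supplied by (1). The case $z=ys_i$ is equally short.

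Finally, fix the convention up front: the conditions must be read with $(i,j)$, $i\le j$, denoting the cycle $\{i,j\}$. With the paper's larger-first $\mathrm{Cyc}$, condition (2) is vacuous and (1) is false; for instance $231\in\mathcal{A}(321)$ but $w(3)<w(1)$. Your sentence that for the cycle $(i+1,i)$ ``the descent condition is exactly what condition (1) gives'' is right only under the $i\le j$ reading.
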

For $y \in \mI_n$, Theorem~\ref{thm: involution atoms} is perhaps more easily understood in terms of the \newword{inverse atoms} $\mathcal{A}^{-1}(y) = \{w^{-1}: w \in \mA(y)\}$.
Here, for $v \in \mA^{-1}(y)$ and $(i,j),(k,l) \in \Cyc(y)$ with $i > j$,
\begin{itemize}
    \item (1) says that $i$ occurs before $j$ in $v$;
    \item from (2), when $(i,j)$ and $(k,l)$ nest with $j < l \leq k < i$, neither $k$ nor $l$ occurs between $i$ and $j$ in $v$---the possible subwords of $v$ they form are $ijkl$, $kijl$ and $klij$;
    \item from (2) and (3), when $(i,j)$ and $(k,l)$ cross with $j < l < i < k$, then $l$ occurs after $i$ (hence also $j$), so they form the subword $ijkl$;
    \item when $(i,j)$ and $(k,l)$ neither nest or cross, say with $i < l$, from (3) $i,j$ both occur before $k,l$ in $v$.
\end{itemize}

There is natural graph structure on inverse atoms encoded in the following theorem.
\begin{thm}[\cite{HMP2}*{Thm.~6.10}]
    \label{t: atom local move}
    For $y$ an involution, view $\mA^{-1}(y)$ as a graph with edges
    \begin{equation}
        \label{eq:atom-transformation}
    u = [\cdots c,a,b \cdots] \sim  v = [\cdots b,c,a \cdots]
    \end{equation}
    for consecutive subsequences where $a<b<c$ and other entries are unchanged.
    Then this graph is connected.
\end{thm}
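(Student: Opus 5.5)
We would prove connectivity by exhibiting a canonical inverse atom $v_0 \in \mA^{-1}(y)$ and showing that every $v \in \mA^{-1}(y)$ is joined to $v_0$ by a path of moves~\eqref{eq:atom-transformation}. For $v_0$ we take the word obtained by listing the cycles $(i,j)\in\Cyc(y)$, $i \ge j$, in increasing order of their smaller entry $j$. Each cycle is written as the two letters $i\,j$, or as the single letter $i$ if $i=j$.

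\textbf{Step 1: $v_0$ is an inverse atom.} We check the bulleted reformulation of Theorem~\ref{thm: involution atoms} for each pair of cycles $(i,j),(k,l)$ with $j<l$.
\begin{itemize}
\item If they are disjoint with $i<l$, then $v_0$ places $ij$ before $kl$.
\item If they cross with $j<l<i<k$, then $v_0$ contains the required subword $ijkl$.
\item If they nest with $j<l\le k<i$, then $v_0$ contains $ijkl$, which is one of the three allowed subwords.
\end{itemize}
Hence $v_0\in\mA^{-1}(y)$.

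\textbf{Step 2: define a potential.} From the bullets, the only freedom an inverse atom $v$ has relative to $v_0$ is for nested pairs: with outer cycle $(i,j)$ and inner cycle $(k,l)$, $j<l\le k<i$, the subword may be $kijl$ or $klij$ instead of $ijkl$. Let $N(v)$ count the letters of inner cycles that precede the outer letter $i$ in such nested pairs. Then $N(v)=0$ iff $v=v_0$, because all other relative orders are forced.

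\textbf{Step 3: reduce the potential.} We read the moves in the direction $[\cdots b,c,a\cdots]\to[\cdots c,a,b\cdots]$. A consecutive factor $k\,i\,j$ with $j<k<i$ is of the form $b,c,a$ and becomes $i\,j\,k$. Likewise a factor $l\,i\,j$ with $j<l<i$ becomes $i\,j\,l$. Each such move lowers $N$ by one. Given $v\ne v_0$, we choose the outer cycle $(i,j)$ whose letter $i$ occurs earliest among all outer letters preceded by some inner letter.

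We then aim to show that $i$ and $j$ are adjacent in $v$. Otherwise some letter $x$ lies between them, and the bullet conditions rule this out:
\begin{itemize}
\item $x$ cannot belong to a cycle nested inside $(i,j)$ or crossing it;
\item $x$ cannot belong to a cycle disjoint from it;
\item if $x$ belongs to a cycle containing $(i,j)$, this contradicts condition (2) for the outer cycle.
\end{itemize}
We also aim to show that the letter $p$ immediately before $i$ belongs to a cycle nested inside $(i,j)$, using the minimality in the choice of $i$ together with the forced orders for crossing and disjoint pairs. Then $p\,i\,j$ is a factor $b,c,a$. Finally we verify that the result of the move is again in $\mA^{-1}(y)$. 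Only the relative position of $p$ with respect to $i,j$ changes, so it suffices to recheck the conditions involving $p$'s cycle and $(i,j)$; the new subword there is $ijkl$ or $kijl$, both allowed. Induction on $N$ then finishes the proof.

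\textbf{Main obstacle.} We expect the hardest part to be the adjacency claims in Step 3: that $i,j$ are consecutive and that the predecessor of $i$ is an inner letter of a nested cycle. This requires a careful case analysis of how a third cycle can interleave with a nested pair, using all of conditions (1)--(3). If the choice of ``earliest'' $i$ does not suffice, we would try choosing the nested pair with $i$ maximal instead, or choosing the innermost violating pair.
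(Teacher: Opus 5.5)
The paper does not prove this statement; it is quoted from \cite{HMP2}. So I am judging your argument on its own. Step 1 and Step 2 are fine. So is the final check in Step 3 that the move $p\,i\,j\to i\,j\,p$ stays in $\mA^{-1}(y)$ and lowers $N$ by one.

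\textbf{The gap is the selection rule in Step 3.} Choosing the \emph{earliest} outer letter does not guarantee that $i$ and $j$ are adjacent. The third sub-bullet you use to exclude an intervening letter is false. Suppose $(k,l)$ contains $(i,j)$, and view $(k,l)$ as the outer cycle. Then the allowed pattern $i\,k\,l\,j$ (the ``$kijl$'' case with roles exchanged) puts both letters of $(k,l)$ between $i$ and $j$. Condition (2) for $(k,l)$ is not violated, since $k,l$ are consecutive there.

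A concrete counterexample: take $y=54321$, with cycles $(5,1),(4,2),(3,3)$, and $v=34512$. One checks $v\in\mA^{-1}(y)$ from the pairwise conditions; also $\ell(v)=6=\ellhat(y)$. The outer letters preceded by an inner letter are $4$ and $5$. Your rule picks $(i,j)=(4,2)$. But $5,1$ sit between $4$ and $2$, and $3\,4\,5$ is not a move.

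Your fallback rules fail as well:
\begin{itemize}
\item Taking $i$ maximal: for $y=45312$, with cycles $(4,1),(5,2),(3,3)$, and $v=34152\in\mA^{-1}(y)$, this picks $(5,2)$. The predecessor of $5$ is $1$, which comes from the crossing cycle $(4,1)$, and $1\,5\,2$ is not of the form $b\,c\,a$.
\item Taking the innermost violating pair fails on the first example.
\end{itemize}

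\textbf{What works.} Choose the violating outer cycle $(i,j)$ with \emph{smallest minimum} $j$. Equivalently, peel cycles of $v_0$ off the left of $v$ while they sit at the front.
\begin{itemize}
\item \emph{Adjacency.} Any letter between $i$ and $j$ lies in some $(k,l)$ with $l<j<i<k$, in the pattern $i\,k\,l\,j$. Then $(k,l)$ is itself violating, via $i$ preceding $k$, and has smaller minimum. This contradicts the choice of $(i,j)$.
\item \emph{The predecessor is inner.} Suppose the predecessor $p$ of $i$ were not from a cycle nested in $(i,j)$. Then its cycle $C$ has minimum less than $j$. It is either an outer cycle in the pattern $k\,l\,i\,j$, or a crossing or disjoint cycle lying entirely before $(i,j)$.
\item Some inner letter $q$ of $(i,j)$ precedes $i$, hence precedes $p$. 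Compare the cycle of $q$ with $C$. If that cycle crosses $C$ or is disjoint from it, the forced order is contradicted. If it is nested in $C$, condition (2) for $C$ puts $q$ before the larger letter of $C$. Then $C$ is violating with smaller minimum, again a contradiction.
\end{itemize}
With this replacement for the selection rule, your induction on $N$ goes through.
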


\begin{exa}
    For $y = 5472163$, we have $\Cyc(y) = \{(5,1),(4,2),(7,3),(6,6)\}$ and
\[
\mA^{-1}(y) = \{4251673,4512673,5142673,4251736,4512736,5142736\}
\]
by Theorem~\ref{thm: involution atoms}.
For any cycle $(i, j) \in \Cyc(y)$, the $i$ must appear to left of $j$ in any inverse atom. By part (2) of the theorem, since $(4,2)$ is nested in $(5,1)$, the numbers $\{4,2\}$ must not appear between $\{5,1\}$, similarly for $\{6\}$ and $\{7,3\}$. By part (3) of the theorem, the numbers $\{5,1,4,2\}$ must appear before the numbers $\{7,3,6\}$. By Theorem~\ref{t: atom local move}, $\mA^{-1}(y)$ is connected by local moves, and the graph induced on $\mA^{-1}(y)$ by~\eqref{eq:atom-transformation} is
\[
\begin{tikzpicture}
    \node at (0,0) (a) {4251673};
    \node at (3,0) (b) {4512673};
    \node at (6,0) (c) {5142673};
    \node at (0,1) (d) {4251736};
    \node at (3,1) (e) {4512736};
    \node at (6,1) (f) {5142736};
    \draw (a) -- (b) -- (c) -- (f) -- (e) -- (d) -- (a);
    \draw (b) -- (e);
\end{tikzpicture}
\]
with vertical edges corresponding to $673 \leftrightarrow 736$ and horizontal arrows corresponding to transformations where $a = 1$, $c = 5$, and $b= 4$ or $2$.
\end{exa}

For $A = \{a_1 < \dots < a_k\} \subseteq [n]$ and $w \in S_n$, let $[w]_A = w(a_1) \dots w(a_k) \in S_{w(A)}$.
The following is a slight extension of Theorem 1.3 in~\cite{hamaker2018transition}.

\begin{theorem}
    \label{t:atom-transposition}
    Let $y,z \in \mI_n$ with $y \lessdot^\mI z$. Then
    \[
    \mA(z) = \{v t_{ij}: v \in \mA(y), t^\mI_{ij}(y) = z, v \lessdot vt_{ij}\}.
    \]
\end{theorem}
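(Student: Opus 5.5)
The plan is to prove the two inclusions separately, taking Theorem 1.3 of \cite{hamaker2018transition} as the starting point. That result already gives the statement in the special case where $t^\mI_{ij}$ is applied with one fixed choice of $(i,j)$. The extension needed here is to allow every pair $(i,j)$ with $t^\mI_{ij}(y)=z$. By Figure~\ref{fig:I-bruhat-cover}, several pairs can produce the same cover, for example $(b,c)$ and $(a,c)$ in the nesting pictures. The key tool throughout will be the explicit description of atoms in Theorem~\ref{thm: involution atoms}, applied to inverse atoms.

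For the inclusion $\supseteq$, take $v\in\mA(y)$ and $(i,j)$ with $t^\mI_{ij}(y)=z$ and $v\lessdot vt_{ij}$. Set $w=vt_{ij}$. Then $\ell(w)=\ell(v)+1=\ellhat(y)+1=\ellhat(z)$, so it remains only to show $w^{-1}\circ w=z$.

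\begin{enumerate}
\item I would verify the three conditions of Theorem~\ref{thm: involution atoms} for $w$ relative to $\Cyc(z)$. Passing from $v$ to $w$ only swaps the values at positions $i$ and $j$, and the condition $v\lessdot vt_{ij}$ means $v(i)<v(j)$ with no position strictly between $i$ and $j$ carrying a value strictly between them.
\item Cycles of $z$ disjoint from $\{a,b,c,d\}$ are the same as in $y$, and their relations to each other are unchanged. So only the local configuration on $\{a,b,c,d\}$, and its interaction with other cycles, needs checking.
\item For each row of Figure~\ref{fig:I-bruhat-cover} and each admissible $(i,j)$, I would check conditions (1)--(3). This is best done in the inverse-atom language: the bulleted subword constraints ($ijkl$, $kijl$, $klij$, etc.) show that swapping the letters $i,j$ in $v^{-1}$ converts a valid $y$-pattern into a valid $z$-pattern.
\item Cover-ness of $v\lessdot vt_{ij}$ rules out any intervening letters that would violate condition (2) for the new, larger arc.
\end{enumerate}

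For the inclusion $\subseteq$, take $w\in\mA(z)$; we must produce $v\in\mA(y)$ and a pair $(i,j)$ with $w=vt_{ij}$ and $v\lessdot w$.

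\begin{enumerate}
\item Theorem 1.3 of \cite{hamaker2018transition} already supplies some pair (its distinguished one) for which $\mA(z)\subseteq\{vt_{ij}\}$. In that case we are done, since a representation using one admissible pair suffices.
\item If that theorem is phrased with a specific pair in each case, I would instead argue directly. Given $w$, inspect the relative order of $w$ on the relevant positions of $\{a,b,c,d\}$ in $z$. Using the subword constraints from Theorem~\ref{thm: involution atoms} on $w^{-1}$, choose $(i,j)$ among the admissible pairs so that swapping yields a valid $y$-atom of length one less.
\item Condition (2) for $z$ guarantees that no intermediate value lies between $w(i)$ and $w(j)$, so $\ell(v)=\ell(w)-1$, which is exactly the cover condition.
\end{enumerate}

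The main obstacle is the case analysis across the rows of Figure~\ref{fig:I-bruhat-cover} with multiple admissible pairs, especially rows where $a=b$ or $c=d$ (fixed points), where the conventions differ from \cite{incitti2004bruhat}. I would handle it by translating each row into allowed subwords of inverse atoms and checking each swap directly, leaning on the connectivity in Theorem~\ref{t: atom local move} only if a uniform argument is needed.
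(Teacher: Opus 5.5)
There is a genuine gap, in the inclusion $\subseteq$.

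\textbf{Step 1 cannot work.} No single admissible pair produces all of $\mA(z)$. For $y=213$ and $z=321$ we have $\mA(y)=\{213\}$ but $\mA(z)=\{231,312\}$. These two atoms arise via $t_{23}$ and $t_{13}$ respectively, and both pairs satisfy $t^\mI_{ij}(y)=z$ by the second row of Figure~\ref{fig:I-bruhat-cover}. This is exactly why the statement takes a union over pairs.

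\textbf{Step 2 assumes the conclusion.} It asserts that for every $w\in\mA(z)$ some admissible swap lands in $\mA(y)$, which is the content of the theorem. Verifying this through Theorem~\ref{thm: involution atoms} is not a local matter. Conditions (2) and (3) involve every cycle of $y$, and the hypothesis $y\lessdot^\mI z$ must be used to control those other cycles. For example, take $y=2134$ and $z=t^\mI_{24}(y)=4231$. On $\{1,2,4\}$ this looks exactly like the cover $213\lessdot^\mI 321$. Yet $\mA(z)=\{4123,3142,2341\}$ consists of permutations of length $3$, while $\mA(y)=\{2134\}$ has length $1$. The failure comes entirely from the fixed point $3$ sitting between $b=2$ and $c=4$. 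Your plan checks the rows of Figure~\ref{fig:I-bruhat-cover} but never says what consequence of $\ellhat(z)=\ellhat(y)+1$ you would use to exclude such intervening cycles. So the argument has no local-to-global step.

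\textbf{How the paper handles this.} Set $E=\{i,j,y(i),y(j)\}=\{i,j,z(i),z(j)\}$. Corollary~3.19(b) of \cite{hamaker2018transition} gives $[w]_E\in\mA([z]_E)$, which makes a finite inspection on $E$ legitimate. That inspection yields a unique transposition $t=(i',j')$ with $[wt]_E\in\mA([y]_E)$. Then $t^\mI_{i'j'}(y)=z$, and Theorem~4.22 of \cite{hamaker2022involution} lifts this to $wt\in\mA(y)$ globally. The inclusion $\supseteq$ is taken directly from that same theorem. Your direct check for $\supseteq$ is plausible, but it carries the same unaddressed interaction with the other cycles: you flag it in your step 2 and then only check local rows.

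\textbf{Step 3 is misjustified but unnecessary.} Condition (2) for $z$ only constrains positions inside an arc of $z$. It says nothing useful when $(i,j)$ is not an arc of $z$. For instance, $(i,j)=(b,c)$ in the second row is forced when $w^{-1}$ contains the subword $cab$. Then positions between $b$ and $c$ with values between $w(a)$ and $w(b)$ are left unconstrained. However, once $v=wt_{ij}\in\mA(y)$ is known, $\ell(v)=\ellhat(y)=\ellhat(z)-1=\ell(w)-1$, so $v\lessdot w$ follows for free.
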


\begin{proof}
    The containment $\supseteq$ is an immediate corollary of~\cite{hamaker2022involution}*{Thm.~4.22}, which is itself a restatement of Theorem 1.3 and related results from~\cite{hamaker2018transition}.
    For the opposite containment $\subseteq$, let $w \in \mA(z)$ and $i<j$ so that $t^\mI_{ij}(y) = z$.
    Define $E = \{i,j,y(i),y(j)\}$ and note $E = \{i,j,z(i),z(j)\}$ by the definition of $t^\mI_{ij}$.
    From Corollary 3.19 (b) in~\cite{hamaker2018transition}, $[w]_E \in \mA([z]_E)$, and direct inspection reveals there exists a unique transposition $t = (i',j')$ with $i',j' \in E$ so that $[wt]_E \in \mA([y]_E)$.
    Then $t^\mI_{i'j'}(y) = z$, so by~\cite{hamaker2022involution}*{Thm.~4.22} we see $wt \in \mA(y)$ and the result follows.
\end{proof}

We can state a property for atoms roughly equivalent to the Exchange Property for involution words.

\begin{thm}[{\cite{hamaker2018transition}*{Thm 3.23}}]
\label{t:atoms-exchange}
    Fix $y \in \mI_n$, $v \in \mA(y)$ and $w$ so that $v \lessdot w$ and $w$ is not an atom for any $z \in \mI_n$.
    Then there exists unique  $u \neq v$ with $u \lessdot w$ and $u  \in \mA(y)$.
    
\end{thm}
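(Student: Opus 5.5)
Since the statement is cited from \cite{hamaker2018transition}*{Thm 3.23}, the plan is a direct argument from the 0--Hecke description of atoms, with the Deletion Property (Lemma~\ref{l:perm-exchange}) and Theorem~\ref{t:atom-transposition} doing most of the work. Write $w = vt$ with $t = t_{ab}$ a transposition and $\ell(w) = \ell(v)+1 = \ellhat(y)+1$. Fix $\bfa = (a_1,\dots,a_p) \in \mR(v)$. Then $\bfa \in \hat{\mR}(y)$, and by the strong exchange property in $S_n$ there is a reduced word $\bfb$ of $w$, of length $p+1$, obtained by inserting a single letter into $\bfa$.

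\textbf{Existence.} Read $\bfb$ as an involution word, i.e.\ compute $s_{b_1} \iprod \dots \iprod s_{b_{p+1}} = w^{-1}\circ w =: z'$. Since $w$ is not an atom of anything, $\ellhat(z') < p+1$. Because $\iprod$ only changes $\ellhat$ by $0$ or $+1$ at each step, and deleting the inserted letter gives $y$ with $\ellhat(y)=p$, we get $z' = y$. Hence at some step $m$ the action of $s_{b_m}$ stalls, so the involution word $\bfb$ is non-reduced for $y$. I would then adapt the involution-word deletion property to find a \emph{second} letter $b_k$, $k\ne$ the inserted position, whose removal still yields a reduced involution word for $y$. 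I expect this to follow by tracking the ``stall'' position against the insertion position, as in Lemma~\ref{l:perm-exchange} applied to the word $\bfb^{\mathrm{rev}}\bfb$ that computes $w^{-1}w$. The resulting $u = w t'$ satisfies $u \lessdot w$, has a reduced word lying in $\hat{\mR}(y)$, and so $u \in \mA(y)$. The step I expect to be hardest is showing that this $u$ differs from $v$. For that I would argue that if the only deletable letter were the inserted one, then $w$ itself would be an atom of the involution obtained by following $\bfb$ without the stall, contradicting the hypothesis.

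\textbf{Uniqueness.} Suppose $u_1,u_2,v$ are three distinct elements of $\mA(y)$ covered by $w$. Restrict to the set $E$ of positions moved by the transpositions relating them, together with their $y$-partners. By the local criterion (Theorem~\ref{thm: involution atoms}), membership in $\mA(y)$ is checked on pairs of cycles. So $[u_i]_E$ and $[v]_E$ would be three atoms of $[y]_E$ lying under the same $[w]_E$, where $|E|\le 6$ or so. A finite case check on small involutions, again organised by the cycle pictures of Figure~\ref{fig:I-bruhat-cover}, rules this out. I would organise it via the inverse-atom subword patterns ($ijkl$, $kijl$, $klij$) listed after Theorem~\ref{thm: involution atoms}. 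Uniqueness is thus a bounded verification, whereas the genuine content is the existence step above.
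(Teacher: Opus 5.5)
\paragraph{The existence step is missing.}
The paper does not prove this statement; it quotes it from \cite{hamaker2018transition}. Your argument therefore has to stand on its own, and its central step is not there.

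Deleting the unique stalled letter of $\mathbf{b}$ does give an atom of $y$ covered by $w$. But nothing prevents the stall from falling exactly on the inserted letter, and then that atom is $v$ again. This already happens for $y=321$, $v=231$, $\bfa=(1,2)$, $w=321$. The insertion $\mathbf{b}=(1,2,1)$ stalls at its last (inserted) letter and returns $v$. The other insertion $(2,1,2)$, with a letter added at the front, stalls at the end and yields the second atom $312$. So you need a way to pass to a reduced word of $w$ whose stall lies elsewhere, and none is given.

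The tools you point to do not supply it:
\begin{itemize}
\item ``Adapting the involution-word deletion property'' is not an available input. A second deletable letter in a nearly reduced involution word is essentially this theorem; the paper derives Theorem~\ref{t:mu-exchange} from it, not conversely.
\item Lemma~\ref{l:perm-exchange} applied to $\mathbf{b}^{\mathrm{rev}}\mathbf{b}$ tells you nothing. That word multiplies to $w^{-1}w=e$ in $S_n$ and carries no information about the Demazure product $w^{-1}\circ w$.
\item Your contradiction for $u\neq v$ is empty. The equality $w^{-1}\circ w=y$ with $\ellhat(y)<\ell(w)$ depends only on $w$. So $w$ is a non-atom wherever the stall falls, and there is no other involution ``obtained by following $\mathbf{b}$ without the stall''.
\end{itemize}

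The stall idea can be repaired by induction on $\ell(w)$. Suppose the stall of some $\mathbf{b}\in\mR(w)$ is at a position $m\le p$ whose deletion gives $v$. Then $s_{b_1}\cdots s_{b_m}$ is a shorter non-atom covering the atom $s_{b_1}\cdots s_{b_{m-1}}$. By induction it covers a second atom, and that atom followed by $s_{b_{m+1}}\cdots s_{b_{p+1}}$ is an atom of $y$ covered by $w$ and different from $v$. Choosing $\mathbf{b}$ to end in a right descent of $w$ other than $v^{-1}w$ either puts you in this case or finishes directly. If instead $v^{-1}w=s_r$ is the only right descent of $w$, then in $v^{-1}$ the value $r$ precedes every larger value and $r+1$ follows every smaller one. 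Rule (1) for inverse atoms (first bullet after Theorem~\ref{thm: involution atoms}) then gives $y(r)\le r<r+1\le y(r+1)$, so the final $s_r$ cannot stall.

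\paragraph{Uniqueness is only a plan.}
Three transpositions together with their $y$-partners can involve up to twelve positions, not six. Restricting to a $y$-invariant set $E$ does preserve atoms and cover relations. However, you silently drop the hypothesis that $w$ is not an atom, and nothing shows that $[w]_E$ is still a non-atom. So the local statement you propose to check is a different, stronger one, and the check itself is never carried out. A genuine argument is needed here.

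\paragraph{Two smaller repairs.}
First, an arbitrary $\bfa\in\mR(v)$ need not extend by one letter to a reduced word of $w$. For $v=s_1s_3$, $\bfa=(3,1)$ and $w=s_1s_2s_3$, the only reduced word of $w$ is $(1,2,3)$. Pick $\mathbf{b}\in\mR(w)$ first and obtain $\bfa$ by strong exchange.

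Second, $w^{-1}\circ w=y$ does not follow from ``$\ellhat$ rises by $0$ or $1$ per step''. You need monotonicity, $v\le w\Rightarrow v^{-1}\circ v\le w^{-1}\circ w$, together with the fact that $\ellhat$ is a rank function on $(\mI_n,\le^\mI)$.
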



\subsection{$\mu$--involutions}
\label{ss:mu-involutions}

For $A \subseteq \NN$ with $|A| = k$, $Q \in S_A$ is a \newword{block involution} if $\std(Q) \in \mI_k$.
Then the \newword{block cycles} of $Q$ are elements of the set
\[
\Cyc(Q) = \{(Q(j),Q(i)): (j,i) \in \Cyc(\std(Q))\}.
\]
For example,  $Q = 927681$ is a block involution since $\std(Q) = 624351 \in \mI_6$, and the block cycles of $Q$ are $\Cyc(Q) = \{(9,1),(2,2),(7,6),(8,8)\}$.

\begin{defn}
\label{def:mu-involution}
For $\mu  = (\mu_1, \cdots, \mu_k)$ a composition of $n$ and $\pi \in S_n$, the \newword{block decomposition} of $(\pi,\mu)$ is the tuple of words $(\pi^{(1)},\dots,\pi^{(k)})$ where for $m_i = \mu_1 + \dots + \mu_{i-1}$
\[
B_i = (\pi(m_i{+}1) ,\dots,\pi(m_i+\mu_i)).
\]
Note here that $m_1 = 0$ and $m_{k+1} = n$.
Call $B_i$ the $i$th \newword{block} in $(\pi,\mu)$.
Then $\pi$ is a \newword{$\mu$--involution} if each block in $(\pi,\mu)$ is a block involution.
Let 
\[
\mI_\mu = \{\pi \in S_n: (\pi,\mu) \text{ is a } \mu\text{-involution}\}.
\]
Note $\mI_{(n)} = \mI_n$ and $\mI_{(1^n)} = S_n$, also that $e,w_0 \in \mI_\mu$ for all $\mu$.
\end{defn}

We will frequently depict $\mu$--involutions by separating the blocks with $|$'s.
For example, with $\pi = 5163742$ and $\mu = (3,1,3)$, to interpret $\pi$ as a $\mu$--involution we write $\pi = 516|3|742$.
To verify $\pi$ is a $\mu$--involution, we check $B_1$, $B_2$ and $B_3$ are all block involutions:
\[\std(B_1) = \std(516) = 213, \quad \std(B_2) = \std(3) =1, \quad\mathrm{and}\quad \std(B_3) = \std(742) = 321.
\]

For $\mu = (\mu_1,\dots,\mu_k)$, the \newword{cycles} of a $\mu$--involution $\pi$ are elements of
\[
\Cyc_\mu(\pi) = \bigsqcup_{i = 1}^k \Cyc(B_i).
\]
When writing $\Cyc_\mu(\pi)$, we order the cycles first by the order of the blocks, then within each block in increasing order based on the smallest number.
We call this the \newword{$\mu$--cycle order}.
For example, with $(\pi,\mu)$ as above
\[
\Cyc_\mu(\pi) = \{(5,1),(6,6),(3,3),(7,2),(4,4)\}.
\]
There is a unique $y_\pi \in \mI_n$ so that $\Cyc_\mu(\pi) = \Cyc(y_\pi)$.
We can then interpret $\pi$ as an ordered set partition $O(\pi) = (S_1,\dots,S_k)$ of $\Cyc(y_\pi)$.
Continuing our previous example, we see $y_\pi = 5734162$ as
\[
O(\pi) = (5,1),(6,6)|(3,3)|(7,2),(4,4)
\]
is an ordered set partition of $\Cyc(5734162)$.

The 0--Hecke monoid acts on $\mI_\mu$.
For $\pi \in \mI_\mu$, define
\[
\pi \circ_\mu s_i = \begin{cases}
    s_i \circ \pi & i,i{+1} \text{ in different blocks,}\\
    s_i \circ \pi \circ s_{r_{B}(i)} &  i,i{+1} \text{ in the same block } B,
\end{cases}
\]
where $r_B(i)$ denotes the relative value of $i$ in the block $B$ as defined in Section~\ref{ss:perm}.
In the first case, we swap the values $i$ and $i{+1}$ in $\pi$ between blocks if $i$ occurs before ${i+}1$.
In the latter case, we act on the standardization within the block involution $B$ containing both $i$ and $i{+1}$ via $\std(B) \iprod s$, where $s$ corresponds to the relative value of $i$ in its block.
For example, with $\pi = 516|3|742$ (so $\mu = (3,1,3)$) we have
\[
\pi \circ_\mu s_4 = \pi, \quad \pi \circ_\mu s_3 = 516|4|732, \quad \pi \circ s_5 = 651 |3|742,
\]
with the last product corresponding to $213 \iprod s_2 = 321$.

The operation $\muprod$ extends to an action of the 0--Hecke monoid.

\begin{prop}[\cite{can2013weak}*{Lem.~4.4}]
\label{p:mu-monoid}
For $\mu \vDash n$, the operation $\muprod$ extends to an action of $(S_n,\circ)$.
\end{prop}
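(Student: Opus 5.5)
The plan is to transport $\muprod$ to a model in which it visibly splits into two known $0$--Hecke actions. Then we check the defining relations of $(S_n,\circ)$ on generators. Since the $0$--Hecke monoid is presented by $s_i\circ s_i = s_i$, $s_i\circ s_j = s_j\circ s_i$ for $|i-j|>1$, and $s_i\circ s_{i+1}\circ s_i = s_{i+1}\circ s_i\circ s_{i+1}$, it suffices to show three things:
\begin{itemize}
\item each $\pi\muprod s_i$ lies in $\mI_\mu$;
\item $(\pi\muprod s_i)\muprod s_i=\pi\muprod s_i$;
\item the commutation and braid relations hold for $\muprod$.
\end{itemize}

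\textbf{The model.} By Proposition~\ref{p:parabolic} (applied to inverses), write each $\pi\in S_n$ uniquely as $\pi = v\,y$. Here $y=y_1\times\dots\times y_k\in S_\mu$, and $v$ is increasing on each block of positions $\{m_i+1,\dots,m_i+\mu_i\}$; this is a minimal length representative of $vS_\mu$. Concretely, $y_i=\std(B_i)$ and $v$ records which values occupy block $i$. Thus $\pi\in\mI_\mu$ iff each $y_i$ is an involution, so $\mI_\mu$ is in bijection with pairs $(v,y)$ where $v$ is minimal in $vS_\mu$ and $y$ is an involution of $S_\mu$.

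\textbf{Translating the action.} If $i,i{+}1$ lie in different blocks of $\pi$, then $s_iv$ is still increasing on blocks. In that case $\pi\muprod s_i=(s_i\circ v)\,y$, and $\ell(s_iv)>\ell(v)$ exactly when $i$ occurs before $i{+}1$. If $i,i{+}1$ lie in the same block $B$, then they are adjacent in value inside that block, so $s_iv=v\,s_{r}$ with $s_r$ the simple transposition of $S_\mu$ corresponding to $r=r_B(i)$. In that case $\pi\muprod s_i = v\,(y\iprod s_r)$.

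The first consequence is well-definedness: $y\iprod s_r$ is again an involution, so $\pi\muprod s_i\in\mI_\mu$. The second is idempotence, which follows from idempotence of $\circ$ on $v$ and of $\iprod$ on $\mI_{\mu_j}$. The third is commutation: for $|i-j|>1$ the two moves involve disjoint values. They either both act on $v$, both act on $y$ in commuting factors or commuting generators, or act on different components. In each case commutation reduces to commutation in $(S_n,\circ)$ or in the Richardson--Springer action.

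\textbf{The braid relation.} Split into cases according to how the values $i,i{+}1,i{+}2$ are distributed among blocks.
\begin{itemize}
\item If all three are in distinct blocks, both sides act only on $v$ (block membership is permuted but stays distinct), and the relation is the braid relation of $(S_n,\circ)$ applied to $v$.
\item If all three are in one block, they are consecutive relative values $r,r{+}1,r{+}2$ there. Both sides act only on $y$ via $s_r,s_{r+1}$, and the relation is the braid relation for the $0$--Hecke action $\iprod$ on involutions~\cite{richardson1990bruhat}.
\item The mixed case, with exactly two of the three values in a common block, is where I expect the real work.
\end{itemize}

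\textbf{The mixed case.} Here the moves alternate between changing $v$ (moving a value across blocks) and changing $y$ (acting within a block). The relative value of the shared-block element also shifts as values enter or leave the block. I would reduce to a finite check: only the three values and at most two blocks matter. Restrict $\pi$ to these positions and standardize, which gives a $\mu'$--involution with $\mu'$ having two parts totalling at most three plus a bounded number of extra values in the shared block. Then verify both sides directly. The key observation to organize this is that $s_i s_{i+1}s_i$ just reverses the order of the values $i,i{+}1,i{+}2$. Both sides therefore end with the same set partition of $\{i,i{+}1,i{+}2\}$ among blocks, and it remains to compare the block involution on the shared block. One must confirm there that the "swap across blocks if increasing" rule and the $\iprod$ rule produce the same fixed-point/2-cycle pattern for both orders of operations. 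This case analysis is the main obstacle, but it is finite.
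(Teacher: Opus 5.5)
\textbf{Verdict: genuine gap.} Your setup is sound, but the one case you identify as the real work, the mixed braid case, is left unproved. The paper gives no proof of its own here (it quotes Can--Joyce), so your argument would have to supply this step itself.

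\textbf{What is correct.} The factorization $\pi=vy$ is right. So is the translation of $\pi\muprod s_i$ into $(s_i\circ v)\,y$ between blocks and $v\,(y\iprod s_{m+r})$ within a block. Closure, idempotence, and the two unmixed braid cases all go through. For commutation you should add one line: when $|i-j|>1$, a between-block swap of $j,j{+}1$ does not change $r_B(i)$, because $j,j{+}1$ both lie below $i$ or both above $i{+}1$.

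\textbf{What is missing.} You call the mixed braid case a ``finite check'' and never do it. The heuristic you propose to organize it is also false in the $0$--Hecke setting. The word $s_is_{i+1}s_i$ does not reverse $i,i{+}1,i{+}2$, because individual steps can be trivial. For example, if $i$ lies in a later block than $i{+}1,i{+}2$ and these two share a block, the left side does no-op, within-block move, no-op. The block assignment is then unchanged, not reversed. So ``both sides end with the same set partition'' does not follow from the reason you give, although it is true. The comparison on the shared block is left entirely open.

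\textbf{How to close it.} One observation makes the mixed case short.
\begin{enumerate}
\item The $v$-component alone evolves by $vS_\mu\mapsto (s_i\circ v)S_\mu$. Equivalently, $s_i$ sorts the block labels of $i,i{+}1$ into weakly decreasing order. This is the $0$--Hecke action on $S_n/S_\mu$, so it already satisfies the braid relation, and the $v$-components of the two sides agree.
\item Between-block moves never change $y$. In the mixed case every within-block move changes $y$ by the \emph{same} generator. Let $S$ be the shared block, occupying positions $m{+}1,\dots$. The two elements of $\{i,i{+}1,i{+}2\}$ lying in $S$ are always adjacent in $S$, at relative positions $r,r{+}1$ with $r-1=|S\cap[i-1]|$. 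Between-block moves never alter $S\cap[i-1]$, so $r$ is constant throughout.
\item By idempotence, each side therefore ends with $y$ or with $y\iprod s_{m+r}$, according as it performs zero or at least one within-block move.
\item Write $x<z$ for the two blocks involved. The label patterns of $(i,i{+}1,i{+}2)$ are $xxz$, $xzx$, $zxx$, $zzx$, $zxz$, $xzz$, and each takes one line to check. In all six cases, both $s_is_{i+1}s_i$ and $s_{i+1}s_is_{i+1}$ perform at least one within-block move and end in the same pattern.
\end{enumerate}
This completes the braid relation, with no need to restrict and standardize or to track partners and fixed-point/2-cycle patterns.
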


We say $\bfa = (a_1,\dots,a_p)$ is a \newword{$\mu$--word} for $\pi \in \mI_\mu$ if
\[
\pi = e \muprod s_{a_1} \muprod \dots \muprod s_{a_p}.
\]
A $\mu$--word is \newword{reduced} if it is of minimal length.
Let $\ellmu(\pi)$ be this minimal length and $\mR_\mu(\pi)$ be the set of reduced $\mu$--words for $\pi$.
By Proposition~\ref{p:mu-monoid}, $\ellmu$ is well defined and
\[
\mR_\mu(\pi) = \bigsqcup_{w \in \mA_\mu(\pi)} \mR(w)
\]
for some set $\mA_\mu(\pi) \subseteq \{w \in S_n: \ell(w) = \ellmu(\pi)\}$.
Each operation $\muprod s_i$ acts either within a block or between blocks.
Therefore, we can decompose
\[
\ellmu(\pi) = \ellmu^\mI(\pi) + \ellmu^{B}(\pi)
\]
where $\ellmu^\mI(\pi)$ is the number of $\muprod s_i$'s acting within blocks and $\ellmu^B(\pi)$ the number acting between blocks.
We give a combinatorial characterization of $\mA_\mu(\pi)$ in Theorem~\ref{thm: mu involution atoms}.

For $\mu \vDash n$, the \newword{weak $\mu$--Bruhat order} on $\mI_\mu$ is the partial order $\leq^\mu_W$ where $\pi \leq^\mu_W \tau$ if $\tau = \pi \muprod w$ for some $w \in S_n$.
As mentioned in the introduction, $\mu$--involutions correspond to Borel orbits in the variety of complete quadrics, or equivalently $H_\mu$--orbits in the flag variety.
The \newword{$\mu$--Bruhat order} on $\bigsqcup_{\mu \vDash n} \mI_\mu$ is the partial order $\leq^\mu$ given by orbit closure containment.
Note when comparing $\pi$ a $\mu$--involution and $\tau$ a $\nu$--involution that we still use $\leq^{\mu}$.
For weak $\mu$--Bruhat order, such $\pi$ and $\tau$ cannot be compared.

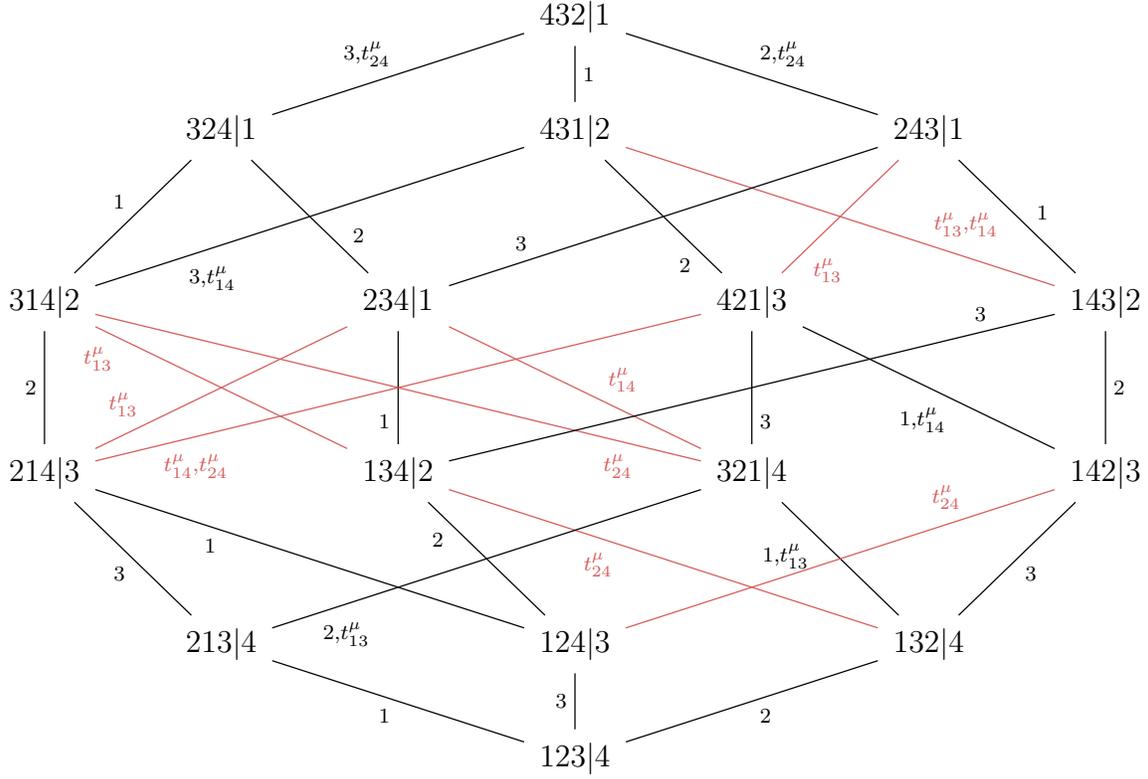
\begin{figure}
    \centering
\[\begin{tikzcd}
	&&& {432|1} &&& \\
	& {324|1} && {431|2} && {243|1} \\
	\\
	{314|2} && {234|1} && {421|3} && {143|2} \\
	\\
	{214|3} && {134|2} && {321|4} && {142|3} \\
	\\
	& {213|4} && {124|3} && {132|4} \\
	&&& {123|4}
	\arrow["{3, t_{24}^\mu}"', no head, from=1-4, to=2-2]
	\arrow["1", no head, from=1-4, to=2-4]
	\arrow["{2, t_{24}^\mu}", no head, from=1-4, to=2-6]
	\arrow["1"', no head, from=2-2, to=4-1]
	\arrow["2"{pos=0.8}, no head, from=2-2, to=4-3]
	\arrow["2"'{pos=0.8}, no head, from=2-4, to=4-5]
	\arrow["{t_{13}^\mu,t_{14}^\mu}"{pos=0.7}, color={rgb,255:red,214;green,92;blue,92}, no head, from=2-4, to=4-7]
	\arrow["3"'{pos=0.8}, no head, from=2-6, to=4-3]
	\arrow["{t_{13}^\mu}"{pos=0.8}, color={rgb,255:red,214;green,92;blue,92}, no head, from=2-6, to=4-5]
	\arrow["1"{pos=0.6}, no head, from=2-6, to=4-7]
	\arrow["{3,t_{14}^\mu}"'{pos=0.2}, no head, from=4-1, to=2-4]
	\arrow["2"', no head, from=4-1, to=6-1]
	\arrow["{t_{24}^\mu}"'{pos=0.9}, color={rgb,255:red,214;green,92;blue,92}, no head, from=4-1, to=6-5]
	\arrow["1"'{pos=0.8}, no head, from=4-3, to=6-3]
	\arrow["{t_{14}^\mu}"{pos=0.6}, color={rgb,255:red,214;green,92;blue,92}, no head, from=4-3, to=6-5]
	\arrow["{t_{14}^\mu, t_{24}^\mu}"{pos=0.9}, color={rgb,255:red,214;green,92;blue,92}, no head, from=4-5, to=6-1]
	\arrow["3"{pos=0.8}, no head, from=4-5, to=6-5]
	\arrow["{1, t_{14}^\mu}"'{pos=0.6}, no head, from=4-5, to=6-7]
	\arrow["3"'{pos=0.1}, no head, from=4-7, to=6-3]
	\arrow["2", no head, from=4-7, to=6-7]
	\arrow["{t_{13}^\mu}"{pos=0.2}, color={rgb,255:red,214;green,92;blue,92}, no head, from=6-1, to=4-3]
	\arrow["3"', no head, from=6-1, to=8-2]
	\arrow["1"'{pos=0.3}, no head, from=6-1, to=8-4]
	\arrow["{t_{13}^\mu}"{pos=0.9}, color={rgb,255:red,214;green,92;blue,92}, no head, from=6-3, to=4-1]
	\arrow["2"'{pos=0.2}, no head, from=6-3, to=8-4]
	\arrow["{t_{24}^\mu}"'{pos=0.4}, color={rgb,255:red,214;green,92;blue,92}, no head, from=6-3, to=8-6]
	\arrow["{2, t_{13}^\mu}"{pos=0.9}, no head, from=6-5, to=8-2]
	\arrow["{1, t_{13}^\mu}"'{pos=0.3}, no head, from=6-5, to=8-6]
	\arrow["{t_{24}^\mu}"'{pos=0.2}, color={rgb,255:red,214;green,92;blue,92}, no head, from=6-7, to=8-4]
	\arrow["3", no head, from=6-7, to=8-6]
	\arrow["1"', no head, from=8-2, to=9-4]
	\arrow["3"', no head, from=8-4, to=9-4]
	\arrow["2", no head, from=8-6, to=9-4]
\end{tikzcd}\]
    \caption{The black edges form the weak $\mu$-Bruhat order for $\mu = (3,1)$.
    Their edge labels correspond
 to the appropriate $s_i$ for the corresponding $0$--Hecke monoid action.
    The \textcolor{rgb,255:red,214;green,92;blue,92}{red edges} are $\mu$-Bruhat order covers that are not weak $\mu$-Bruhat covers.
    They are labeled with transposition-like operators $t^\mu_{ij}$ witnessing these covers that are defined in Section~\ref{ss:transposition}.}
    \label{fig: bruhat order}
\end{figure}

In much of our work we restrict our attention to the order $(\mI_\mu,\leq^\mu)$, which strengthens weak $\mu$--Bruhat order in the sense that $\pi \leq^\mu_W \tau$ implies $\pi \leq^\mu \tau$~\cite{timashev1994generalization}*{\S2.9~(1)}.
Furthermore, $\leq^\mu$ has a combinatorial description via subword containment that is nearly equivalent to~\cite{banerjee2016combinatorial}*{Thm.~1.1~(ii)}.
For our purposes, this theorem can be taken as the definition of $\mu$--Bruhat order.

\begin{theorem}
    \label{t:mu-bruhat-def}
    For $\mu \vDash n$ and $\pi, \tau \in \mI_\mu$, we have $\pi \leq^\mu \tau$ if and only if each $\bfa \in \mR_\mu(\tau)$ has a subword $\bfa' \in \mR_\mu(\pi)$.
\end{theorem}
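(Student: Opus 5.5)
This theorem is essentially Theorem~1.1~(ii) of \cite{banerjee2016combinatorial} recast in our conventions. So the plan is not to argue from scratch: we translate between the two settings and then check that the translation respects both orders. The main point to verify is that our reduced $\mu$--words agree, up to a fixed relabeling, with the reduced words used there. Those words come from the monoid action of the $0$--Hecke monoid on $B$--orbits of $\text{Q}_n$, equivalently on $H_\mu$--orbits of $\text{Fl}_n$, as in \cite{can2013weak}.

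\textbf{Step 1: match the actions.} We identify our action $\muprod$ of Proposition~\ref{p:mu-monoid} with the Richardson--Springer-type action of minimal parabolics $P_i$ on $H_\mu$--orbits. The orbit $\mathcal{O}_\pi$ is sent to the dense orbit in $P_i\mathcal{O}_\pi$.
\begin{itemize}
\item When $i,i{+}1$ lie in different blocks, the $\mu$--involution changes by swapping values, and only when the length increases. This matches the parabolic ($P_\mu$) part, where the orbit behaves like a Schubert cell.
\item When $i,i{+}1$ lie in the same block $B$, the $O_{\mu_j}$ factor acts on $\std(B)$ via $\iprod$. This matches the classical orthogonal-orbit computation.
\end{itemize}
This identification is what \cite{can2013weak} uses to prove Proposition~\ref{p:mu-monoid}. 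The consequence is that $\ellmu(\pi)$ equals the codimension-shifted dimension of $\mathcal{O}_\pi$, and that reduced $\mu$--words are exactly the sequences $P_{a_1},\dots,P_{a_p}$ building $\overline{\mathcal{O}_\pi}$ from the closed orbit $\mathcal{O}_e$.

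\textbf{Step 2: the forward direction.} Given $\pi\leq^\mu\tau$ and $\bfa\in\mR_\mu(\tau)$, we write
\[\overline{\mathcal{O}_\tau}=\overline{P_{a_p}\cdots P_{a_1}\mathcal{O}_e}.\]
The standard Richardson--Springer argument, an induction on $p$, shows that every orbit in this closure is the orbit attained by a subword of $\bfa$ acting through $\muprod$. The point is that the orbits of $P_i\overline{\mathcal{O}}$ are those of $\overline{\mathcal{O}}$ together with their images under $\muprod s_i$. A non-reduced subword producing $\pi$ can then be shortened to a reduced one. Since all operators are idempotent-type monoid elements, we use the Deletion Property (Lemma~\ref{l:perm-exchange}) on the associated $0$--Hecke product, deleting letters that act trivially.

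\textbf{Step 3: the converse.} If some subword $\bfa'\in\mR_\mu(\pi)$ of $\bfa$ exists, then $\mathcal{O}_\pi\subseteq\overline{P_{a_p}\cdots P_{a_1}\mathcal{O}_e}$. This holds because each $P_i$-saturation of a closed set is closed and contains both the original set and its $\muprod s_i$-image. The set on the right is $\overline{\mathcal{O}_\tau}$, so $\pi\leq^\mu\tau$. Here we need only one reduced word of $\tau$, which is consistent with the ``each'' in the statement.

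\textbf{Main obstacle.} The hard part is Step~1's bookkeeping against \cite{banerjee2016combinatorial}. Their conventions, namely left versus right actions, reversed compositions, and inverse versus direct indexing of orbits, differ from ours; this is why the statement is only ``nearly equivalent.'' I would first check the dictionary on small cases, such as $\mu=(3,1)$ in Figure~\ref{fig: bruhat order}. I would then confirm that their subword criterion is invariant under the conjugations and reversals needed to pass between the two conventions.
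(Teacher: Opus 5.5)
Your route differs from the paper's. The paper never looks at $P_i$--saturations. It imports from \cite{banerjee2016combinatorial} (Prop.~5.1, after \cite{timashev1994generalization}) a one-step lifting property for $\leq^\mu$. It then inducts on $\ellmu(\tau)$ and $\ellmu(\tau)-\ellmu(\pi)$ by peeling off the last letter $a_p$ of $\bfa$. Your Bott--Samelson-style saturation argument would instead give both directions directly, but as written its key step is unjustified.

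\textbf{The gap is in Step~2.} You assert that the orbits of $P_i\overline{\mathcal{O}}$ are exactly those of $\overline{\mathcal{O}}$ together with their $\muprod s_i$--images, and you attribute this to the standard Richardson--Springer argument. It is not a formal consequence of that setup. Pass to $H_\mu$--orbits on $\text{Fl}_n$, with $P_i$ acting on the right. Then $\mathcal{O}'P_i$ is the preimage of an orbit on $G/P_i$, and its $H_\mu$--orbits match the orbits of the stabilizer of $gP_i$ on the fibre $gP_i/B\cong\mathbb{P}^1$. If that stabilizer acts through a torus, as $SO_2$ does on $\mathbb{P}^1$, then $\mathcal{O}'P_i$ contains a third orbit of the same dimension as $\mathcal{O}'$. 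That orbit need not lie in $\overline{\mathcal{O}}$, and it is not of the form $\mathcal{O}''\muprod s_i$. For general symmetric pairs this really occurs: these are the non-compact imaginary roots of type I in \cite{richardson1990bruhat}.

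Step~1 does not rule this out. The identification with \cite{can2013weak} only tells you which orbit is dense in $\mathcal{O}P_i$, not the full orbit decomposition. You need a separate rank-one analysis for $H_\mu$ showing the torus case never occurs:
\begin{itemize}
\item when $i,i{+}1$ lie in different blocks, the fibre action should contain unipotent elements coming from $P_\mu$;
\item within a block, it should be the image of an $O_2$, which swaps the two isotropic points.
\end{itemize}
In either case $\mathcal{O}'P_i$ has at most two orbits. With this in place, your induction on $p$ and your Step~3 go through. This check is the real obstacle, not matching conventions with \cite{banerjee2016combinatorial}; your Steps~2--3 never actually use their Theorem~1.1(ii).

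\textbf{A smaller point: the shortening step.} It should not go through the Deletion Property (Lemma~\ref{l:perm-exchange}). For $\mu=(3)$, the word $(1,2,1)$ is reduced in $S_3$ and acting through $\muprod$ it yields $321$. But $\ellmu(321)=2$, and the $0$--Hecke product of $(1,2,1)$ is $w_0$, for which $(1,2,1)$ is already reduced, so that tool deletes nothing. What works is the parenthetical you give. Delete any letter that acts trivially at its step; this does not change the output. Once none remain, use that each nontrivial $\muprod s_i$ raises orbit dimension, hence $\ellmu$, by exactly one, so the remaining word is $\mu$--reduced.
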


\begin{proof}
    Summarizing~\cite{timashev1994generalization}*{\S2.9~(5)} in the $\mu$--involution setting,~\cite{banerjee2016combinatorial}*{Prop.~5.1} states $\pi \leq^\mu \tau$ if and only if $\pi = \tau$ or there exist $\pi',\tau' \in \mI_\mu$ and $i \in [n-1]$ so that $\pi' \muprod s_i = \pi \neq \pi'$, $\tau' \muprod s_i = \tau$ and $\pi' \leq^\mu \tau'$.
    Call this the `one step property'.

    With $\mu,\tau \in \mI_\mu$, let $\bfa = (a_1,\dots,a_p) \in \mR_\mu(\tau)$ and $\tau' = s_{a_1} \muprod \dots \muprod s_{a_{p-1}}$, so $\tau' \leq^\mu_W \tau$ hence $\tau' \leq^\mu \tau$ as well.
    If $\pi \muprod s_{a_p} \neq \pi$, we see by the one step property that $\pi \leq^\mu \tau'$ if and only if $\pi \muprod s_{a_p} \leq^\mu \tau$.
    Since $\pi \leq \pi \muprod s_{a_p}$, we can reduce to the case where $\pi \muprod s_{a_p} = \pi$.
    In this case, there exists $\pi'$ with $\pi' \muprod s_{a_p} = \pi$.

    We argue by induction first on $\ellmu(\tau)$, second on $\ellmu(\tau) - \ellmu(\pi)$ with base case $\tau =e$ where both quantities are 0.
    By the one step property $\pi \leq^\mu \tau$ if and only if $\pi' \leq^\mu\tau'$.
    By the induction hypothesis, $\pi' \leq^\mu\tau'$ if and only if $(a_1,\dots,a_{p-1}) \in \mR_\mu(\tau')$ has a subword in $\mR_\mu(\pi')$.
    Appending $a_p$ to this word gives $\bfa' \in \mR_\mu(\pi)$ a subword of $\bfa$, so $\pi \leq^\mu \tau$ if and only if each $\bfa \in \mR_\mu(\tau)$ contains as a subword some $\bfa' \in \mR_\mu(\pi)$.
\end{proof}

In Section~\ref{ss:transposition} we present a combinatorial description of the cover relations for $(\mI_\mu,\leq^\mu)$.
This will allow us to extend Theorem~\ref{t:atom-transposition} to $\mu$--involutions.
The remaining cover relations in $(\bigsqcup_{\mu \vDash n} \mI_\mu,\leq^\mu)$ are characterized by:

\begin{theorem}[\cite{banerjee2016combinatorial}*{Thm.~1.1~(i)}]
\label{t:mu-nu-cover}
    For $\mu,\nu \vDash n$ with $\pi \in \mI_\mu$, $\tau \in \mI_\nu$, we have $\tau \lessdot^\mu \pi$ if and only if $\nu$ refines $\mu$ with $\ell(\mu) = \ell(\nu)+1$ and $\mA_\mu(\tau) \subseteq \mA_\nu(\pi)$.
\end{theorem}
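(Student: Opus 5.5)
The plan is to treat this as a statement about closures of $B$--orbits in the wonderful compactification $\text{Q}_n$, and to transfer it to the flag variety, where the atoms $\mA_\mu(\cdot)$ compute cohomology classes. I read the indices so that the lower element lies in the boundary stratum of the finer composition. Thus $\tau$ lives in the stratum indexed by the finer composition, that stratum has exactly one more part, and the containment of atom sets says that every reduced word of the lower element is a reduced word of the upper one, read through the appropriate block structures. Under that reading, the argument runs in both directions as follows.

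\textbf{Necessity.} Suppose $\tau \lessdot^\mu \pi$ with $\tau$ and $\pi$ in different strata. The boundary of the $G$--orbit of $GL_n/O_n$-type indexed by $\mu$ in $\text{Q}_n$ is a union of strata indexed by strict refinements of $\mu$. Hence the composition of $\tau$ refines that of $\pi$. I would then show that the refinement has exactly one extra part, arguing by the dimension count. The codimension of the boundary divisor inside the closure of a stratum is $1$. Moreover, the closure of a $B$--orbit meets a deeper stratum only through the divisors, by Timashev's local structure theorem for spherical embeddings. So a cover can only cross into a divisor, meaning one extra part.

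Next I would relate the words, using the fact that $\muprod$ agrees with the coarser action wherever the finer block structure is respected. Take a reduced word $\bfa$ for the lower element. Applying the one step property from the proof of Theorem~\ref{t:mu-bruhat-def}, inductively on length, I would peel letters off $\bfa$ simultaneously from both elements. This reduces to the case where $\tau$ is the minimal element of its stratum lying below $\pi$. In that case I would check directly that the reduced words coincide: the extra block boundary splits a single block involution $B$ into two block involutions, and the words for the closed orbit agree.

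\textbf{Sufficiency.} Conversely, assume the refinement condition holds with one extra part, together with the atom containment. Pick an atom $w$ of the lower element. By Proposition~\ref{p:mu-monoid}, $\bfa \in \mR(w)$ is simultaneously a reduced word in both monoid actions. Write $\bfa = \bfa' a_p$ and use the one step property again, inducting on $\ell(w)$. This reduces to the base situation of the minimal orbit in the divisor, where closure containment follows from the geometry of that divisor. The cover condition, rather than mere comparability, then follows from the rank functions $\ellmu$ differing by exactly one. That difference I would read off from Theorem~\ref{t:main-1}-type block descriptions: splitting a block $B$ changes $\ellmu^\mI$ and $\ellmu^B$ by a net of one.

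\textbf{Main obstacle.} The hard step is the geometric input that a cover across strata must land in a single boundary divisor, and the identification of which $\nu$--orbits lie in $\overline{B\pi}\cap D$. I would try first to import this directly from Timashev's generalized Bruhat order (\cite{timashev1994generalization}*{\S2.9}), which describes orbit closures in terms of the little Weyl group action and the colors. The remaining translation to atoms is combinatorial bookkeeping with Theorem~\ref{thm: involution atoms} applied blockwise.
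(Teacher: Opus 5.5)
The paper gives no proof of this statement; it is quoted from \cite{banerjee2016combinatorial}. You read the garbled indices correctly: the intended conditions are $\ell(\nu)=\ell(\mu)+1$ and $\mathcal{A}_\nu(\tau)\subseteq\mathcal{A}_\mu(\pi)$. Judged on its own, your sketch has a genuine gap where it turns comparability into a cover.

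\textbf{The rank step.} You claim that splitting a block changes $\ell_\mu$ by a net of one. But a common atom $w$ forces $\ell_\nu(\tau)=\ell(w)=\ell_\mu(\pi)$; splitting a block only trades within-block steps for between-block steps. For instance, $(1,2)$ is a reduced word both for $321\in\mathcal{I}_{(3)}$ and for $31|2\in\mathcal{I}_{(2,1)}$, with $(\ell^{\mathcal{I}},\ell^{B})$ equal to $(2,0)$ and $(1,1)$ respectively. So nothing built from $\ell_\mu$ alone can certify the cover.

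Write $O_\mu$ for the $G$-orbit of $\mathrm{Q}_n$ indexed by $\mu$ and $\mathcal{O}_\pi$ for the $B$-orbit indexed by $\pi$. The missing unit must then come from $\dim O_\nu=\dim O_\mu-1$. This only works if $\ell_\mu(\pi)$ is the codimension of $\mathcal{O}_\pi$ in $O_\mu$, i.e.\ if $e\in\mathcal{I}_\mu$ labels the \emph{dense} $B$-orbit of $O_\mu$.

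With the orientation your base cases suggest (``the closed orbit'', ``the minimal orbit in the divisor'') the statement is false. We have $\mathcal{A}_{(2,1)}(e)=\mathcal{A}_{(3)}(e)=\{e\}$. Yet in $\mathrm{Q}_3$ the closed $B$-orbit of smooth conics (those through a fixed point with a fixed tangent line) has dimension $3$, while the closed $B$-orbit of $O_{(2,1)}$ has dimension $1$, with $2$-dimensional $B$-orbits in between. For necessity you also need that closure order on $B$-orbits is graded by dimension (the boundary of a $B$-orbit closure has pure codimension one), which you never state.

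\textbf{The peeling steps.} The one step property in the proof of Theorem~\ref{t:mu-bruhat-def} concerns two elements of the same $\mathcal{I}_\mu$ and only says that \emph{some} letter works. It does not let you prescribe the last letter of a reduced word of the lower element. Taken verbatim across strata it is false: $e\in\mathcal{I}_\nu$ lies below $e\in\mathcal{I}_\mu$ but has no weak-order predecessor.

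What necessity actually requires is the following: if $\tau\lessdot\pi$ across strata and $\tau=\tau'\circ_\nu s\neq\tau'$, then $\pi=\pi'\circ_\mu s\neq\pi'$ and $\tau'\lessdot\pi'$. This is a dimension argument. If $s$ did not lower $\pi$, then $\mathcal{O}_{\tau'}$, which is one dimension larger than $\mathcal{O}_\tau$, would lie in $\overline{\mathcal{O}_\pi}\setminus\mathcal{O}_\pi$ while having the dimension of $\mathcal{O}_\pi$.

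Once everything is peeled, you reach $e\in\mathcal{I}_\nu$ covered by some $\pi''\in\mathcal{I}_\mu$, and you still need a reason that $\pi''=e$. Toroidality would do it: no $B$-stable, non-$G$-stable prime divisor of $\overline{O_\mu}$ contains a $G$-orbit. That, rather than an unspecified appeal to Timashev's local structure theorem, is what produces ``one extra part'' and the containment of reduced words.

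Likewise, in the sufficiency direction, passing from $\tau'\le\pi'$ to $\tau\le\pi$ across strata needs its own geometric argument. The quoted one step property does not cover it.
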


Motivated by this characterization, for $\pi \in \mI_\mu$ and $\tau \in \mI_\nu$ we say $\tau \leq_\mA \pi$ if $\nu$ refines $\mu$ and $\mA_\nu(\tau) \subseteq \mA_\mu(\pi)$.
Equivalently $\tau \leq_\mA \pi$ if $\ell_\nu(\tau) = \ell_\mu(\pi)$ and $\tau \leq^\mu \pi$.
We give a more explicit description of these relations in Section~\ref{ss:mu-to-nu}.

\subsection{Schubert polynomials and transition equations}

    Schubert polynomials, introduced by Lascoux and Sch\"{u}tzenberger, represent the cohomology classes of Schubert varieties in the flag variety~\cites{ls82a}. They are a family of polynomials indexed by permutations.
    For $w \in S_n$, the Schubert polynomial of $w$, denoted $\fS_w$, can be defined recursively using divided difference operators.
    
    Let $S_n$ act on the polynomial ring $\ZZ[x_1, \dots, x_n]$ by permuting variable subscripts.
    For $i \in [n]$, the \definition{divided difference operator $\partial_i$} is defined by
        \begin{align*}
            \partial_i(f) := \frac{f - s_i(f)}{x_i - x_{i+1}}.
        \end{align*}
    Note that $\partial_i(f) \in \ZZ[x_1, \dots, x_n]$ and $s_i \circ \partial_i(f) = \partial_i(f)$, that is $\partial_i(f)$ is symmetric in the variables $x_i$ and $x_{i+1}$.
    The \definition{Schubert polynomial $\mathfrak{S}_w$} is defined by
        \begin{align*}
            \fS_w :=
            \begin{cases} 
              x_1^{n-1}x_2^{n-2}\cdots x_{n-2}^2x_{n-1}^1 & w = w_0, \\
              \partial_i(\fS_{w s_i}) & w(i) < w(i+1).
           \end{cases}
         \end{align*}

    \begin{exa}
        We compute $\mathfrak{S}_{312}$ using the divided difference operators
         \begin{align*}
             \fS_{312} &=  \partial_2(\fS_{312 \cdot s_2}) = \partial_2(\fS_{321})= \partial_2(x_1^2x_2) \\
             &= \frac{x_1^2x_2 - x_1^2x_3}{x_2-x_3} = \frac{x_1^2(x_2 - x_3)}{x_2-x_3} = x_1^2\,.
         \end{align*}
     \end{exa}

   Computing the product of Schubert polynomials combinatorially is an important open problem.
   This product structure is governed by Monk's rule, which we now state.
    \begin{thm}[\cite{monk1959}*{Thm.~3}]
    For $w \in S_n$ and a positive integer $r$, 
        \begin{align*}
            \fS_{s_r}\fS_w = (x_1 + \cdots + x_r)\fS_w = \sum_{\substack{k \leq r < l\\ w \lessdot wt_{k,l}}} \fS_{wt_{k,l}}\,.
        \end{align*}
    \end{thm}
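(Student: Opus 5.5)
The approach is to read off the coefficients of $(x_1+\cdots+x_r)\fS_w$ in the Schubert basis with divided difference operators. This is the standard Chevalley-formula proof, and it avoids an induction that would have to leave $S_n$.

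\textbf{Setup.} Work in $S_\infty$, where the Schubert polynomials $\{\fS_v\}$ form a $\ZZ$-basis of $\ZZ[x_1,x_2,\dots]$ (stability is needed because $F_r\fS_w$, with $F_r = x_1+\dots+x_r$, may involve permutations outside $S_n$). Two facts follow from the definition:
\begin{itemize}
\item $\partial_i\fS_v = \fS_{vs_i}$ if $v(i)>v(i+1)$, and $\partial_i\fS_v=0$ otherwise;
\item the constant term of $\fS_v$ is $\delta_{v,e}$.
\end{itemize}
Iterating along a reduced word $\bfa=(a_1,\dots,a_p)\in\mR(v)$ and writing $\partial_v=\partial_{a_1}\cdots\partial_{a_p}$, the coefficient of $\fS_v$ in any polynomial $f$ is the constant term of $\partial_v f$.

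\textbf{Step 1 (degree).} $F_r\fS_w$ is homogeneous of degree $\ell(w)+1$. Hence only $v$ with $\ell(v)=\ell(w)+1$ can appear.

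\textbf{Step 2 (Leibniz expansion).} Use $\partial_i(fg)=\partial_i(f)\,g+s_i(f)\,\partial_i(g)$, applied from the innermost operator $\partial_{a_p}$ outward. Since $F_r$ is linear, $\partial_i$ of any permuted copy of it is a constant, so at most one operator can act on the linear factor and survive. This gives
\[
[\fS_v]\,F_r\fS_w=\sum_{j}\partial_{a_j}\bigl(s_{a_{j+1}}\cdots s_{a_p}F_r\bigr)\cdot\varepsilon_0\bigl(\partial_{a_1}\cdots\widehat{\partial_{a_j}}\cdots\partial_{a_p}\fS_w\bigr),
\]
where $\varepsilon_0$ denotes the constant term.

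\textbf{Step 3 (which terms survive).} The product $\partial_{a_1}\cdots\widehat{\partial_{a_j}}\cdots\partial_{a_p}$ has $\ell(w)$ factors. Its constant term on $\fS_w$ is $1$ exactly when the deleted word is a reduced word for $w$, and $0$ otherwise. By the deletion and exchange properties (Lemma~\ref{l:perm-exchange}), such a $j$ exists if and only if $v=wt_{kl}$ with $w\lessdot v$, and then the index $j$ is unique.

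\textbf{Step 4 (the scalar).} Put $u=s_{a_{j+1}}\cdots s_{a_p}$. Then $\partial_{a_j}(uF_r)$ is the pairing of the root $e_{a_j}-e_{a_j+1}$ with $u\,\omega_r$, where $\omega_r=e_1+\dots+e_r$. This equals the pairing of $u^{-1}(e_{a_j}-e_{a_j+1})$ with $\omega_r$. The reflection $u^{-1}s_{a_j}u$ is $t_{kl}$, so $u^{-1}(e_{a_j}-e_{a_j+1})=\pm(e_k-e_l)$. Positivity of the inversion, guaranteed by reducedness, gives the sign $+$ with $k<l$. The pairing $\langle\omega_r,e_k-e_l\rangle$ is $1$ if $k\le r<l$ and $0$ otherwise. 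This gives Monk's rule.

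\textbf{Main obstacle.} The delicate step is Step 4. One must track that the reflection produced by deleting $a_j$ is exactly the $t_{kl}$ with $v=wt_{kl}$, acting on positions because it multiplies on the right, and get the sign right. The plan is to check this with the standard inversion-set description: deleting the $j$th letter of a reduced word for $v$ yields $v\,(s_{a_p}\cdots s_{a_{j+1}}s_{a_j}s_{a_{j+1}}\cdots s_{a_p})$, and that product is exactly $t_{kl}=u^{-1}s_{a_j}u$.
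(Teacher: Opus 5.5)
The paper does not prove this statement. It quotes Monk's theorem and uses it only as background, through the transition form in Theorem~\ref{t:monk}, in the proof of Theorem~\ref{t:mu-transition}. So there is no internal argument to compare with. Your proposal is the standard divided-difference proof of Monk's (Chevalley's) formula, and it is correct.

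The following steps all check out:
\begin{itemize}
\item extracting the coefficient of $\fS_v$ as the constant term of $\partial_{a_1}\cdots\partial_{a_p}f$;
\item the twisted Leibniz expansion;
\item the identification of the deleted word with $v\,u^{-1}s_{a_j}u$;
\item the scalar $\partial_{a_j}(uF_r)=[u^{-1}(a_j)\le r]-[u^{-1}(a_j+1)\le r]$, with $u^{-1}(a_j)<u^{-1}(a_j+1)$ forced by $\ell(s_{a_j}u)>\ell(u)$.
\end{itemize}

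A few points should be tightened.
\begin{itemize}
\item Step 3 needs the \emph{strong} exchange property: if $\ell(vt)<\ell(v)$, then $vt$ is obtained by deleting one letter from any reduced word of $v$. It also needs the fact that the reflections $u_j^{-1}s_{a_j}u_j$ are pairwise distinct along a reduced word. Both are standard, but neither is quite the Deletion Property as stated in Lemma~\ref{l:perm-exchange}.
\item You should say why the term in which no $\partial_{a_j}$ hits $F_r$ drops out. It carries the factor $\partial_{a_1}\cdots\partial_{a_p}\fS_w$, which vanishes because $p>\ell(w)$.
\item Passing to $S_\infty$ is necessary, not just convenient, because the sum can involve $l=n+1$. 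For example, $w=21\in S_2$ with $r=2$ gives $(x_1+x_2)\fS_{21}=\fS_{312}+\fS_{231}$. This is the same phenomenon that forces the paper to append $n+1$ as a fixed point in its transition section.
\item You implicitly use that $\fS_w$ is well defined (independent of the choices in the recursion) and stable under $S_n\hookrightarrow S_{n+1}$. Both are standard, but they are extra input beyond the recursive definition given in the paper.
\end{itemize}
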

Transition equations are similar recurrences on Schubert polynomials.
They are direct consequences of Monk's rule, and are often more useful in computation.
\begin{thm}
\label{t:monk}
    For $v \in S_n$,
    \begin{align*}
        x_r \fS_v = \sum_{\substack{ r < s \\   v \lessdot vt_{r,s}}}  \fS_{vt_{r,s}} - \sum_{\substack{h <r \\ v \lessdot (vt_{h,r})}} \fS_{vt_{h,r}}.
    \end{align*}
\end{thm}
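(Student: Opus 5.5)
We derive Theorem~\ref{t:monk} directly from Monk's rule, which is stated just above, by taking the difference of two consecutive instances. Since $\fS_{s_r} = x_1 + \cdots + x_r$, we have $x_r = \fS_{s_r} - \fS_{s_{r-1}}$ for $r \geq 2$. For $r = 1$ we use the convention $\fS_{s_0} = 0$, so that the second sum is empty. Hence
\[
x_r \fS_v = (x_1+\cdots+x_r)\fS_v - (x_1+\cdots+x_{r-1})\fS_v
= \sum_{\substack{k \leq r < l\\ v \lessdot vt_{k,l}}} \fS_{vt_{k,l}} - \sum_{\substack{k \leq r-1 < l\\ v \lessdot vt_{k,l}}} \fS_{vt_{k,l}}.
\]

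Next we compare the index sets of the two sums. Let $X_r = \{(k,l) : k \leq r < l\}$ and $X_{r-1} = \{(k,l): k \leq r-1 < l\}$, where in both cases we restrict to pairs with $v \lessdot vt_{k,l}$.
\begin{itemize}
\item Pairs with $k \leq r-1$ and $l \geq r+1$ lie in both sets. Their terms cancel.
\item Pairs with $k = r$ and $l > r$ lie only in $X_r$. These give $\sum_{r<s,\ v\lessdot vt_{r,s}} \fS_{vt_{r,s}}$.
\item Pairs with $k \leq r-1$ and $l = r$ lie only in $X_{r-1}$. These give $-\sum_{h<r,\ v \lessdot vt_{h,r}} \fS_{vt_{h,r}}$.
\end{itemize}
Summing the three contributions yields the stated identity. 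The bookkeeping here is the only real step, and it is routine. The one point needing care is that each pair $(k,l)$ is counted consistently with $k<l$, so that $t_{k,l}$ is not double counted; this holds because Monk's rule already sums over ordered pairs $k<l$.

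We also need Monk's rule to hold for all $r$ relevant to $v \in S_n$. We may embed $v$ in $S_{n+1}$ (Schubert polynomials are stable under this embedding), which makes $r = n$ available. Since $l$ is unbounded in Monk's rule, the sums are finite and well defined. For $r=1$ the identity is exactly Monk's rule with $r=1$.
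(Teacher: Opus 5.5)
Your argument is correct and is the derivation the paper intends. The paper gives no separate proof, only remarking that transition equations are direct consequences of Monk's rule, and subtracting the $r-1$ instance from the $r$ instance, then sorting pairs into $k=r$, $l=r$, and common pairs, is exactly that deduction. One phrasing fix: the sums are finite \emph{although} $l$ is unbounded, because $v \lessdot vt_{k,l}$ fails whenever the fixed point $m=\max(n,k)+1$ of $v$ lies strictly between $k$ and $l$, since then $v(k)<v(m)<v(l)$.
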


\begin{rem}
    Some authors use transition to refer exclusively to the case where $v = w s_r$ with $r$ the largest (right) descent in $w$.
    We do not adopt this convention.
\end{rem}

In~\cite{wyser2017polynomials}, Wyser and Yong introduced polynomial representatives $\Upsilon_{y}^{\mathrm{O}_n}$ indexed by $y \in \mI_n$ for cohomology classes of orthogonal group orbit closures in $\text{Fl}_n$.
For $y \in \mI_n$ the \newword{involution Schubert polynomial}, originally defined in terms of divided differences, is
\[
\fS_y^{\mI} = \sum_{w \in \mA(y)} \fS_w,
\]
following work of Brion~\cite{brion1998behaviour}.
Involution Schubert polynomials are rescaled versions of Wyser and Yong's polynomial representatives, satisfying $\Upsilon_y^{\mathrm{O}_n} = 2^{\kappa(y)}\fS_y^{\mI}$, where $\kappa(y)$ is the number of two cycles in $y$.
We work with $\fS_y^\mI$ rather than $\Upsilon^{\mathrm{O}_n}_Y$ to simplify our computations, as the power of 2 can easily be added back to recover the geometric significance.

Involution Schubert polynomials satisfy an analogue of the transition equation.
    \begin{thm}[\cite{hamaker2018transition}*{Thm.~1.5}]
    \label{t:inv-monk}
        Let $y \in \mI_\infty$ and $(p,q) \in \Cyc(y)$ then
        \begin{align*}
            2^{-\delta(p,q)}(x_p + x_q)\fS_y^\mI = \sum_{z \in \Phi^+(y,q)} \fS_z^\mI - \sum_{z \in \Phi^-(y,p)} \fS_z^\mI
        \end{align*}
        where 
        \begin{align*}
            \Phi^{+}(y,q) = \{\, z \in \mI_\ZZ : y \lessdot_{I} z \text{ and } z = t_{qj}^\mI(y) \text{ for an integer } j > q \,\}\\ \Phi^{-}(y,p) = \{\, z \in \mI_\ZZ : y \lessdot_{I} z \text{ and } z = t_{ip}^\mI(y) \text{ for an integer } i < p \,\}
        \end{align*}
    \end{thm}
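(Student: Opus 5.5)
The plan is to reduce the statement to Theorem~\ref{t:monk} applied atom by atom, and then reorganize the resulting sum using Theorems~\ref{t:atom-transposition} and~\ref{t:atoms-exchange}. Since $\fS_y^\mI=\sum_{w\in\mA(y)}\fS_w$, we have
\[
(x_p+x_q)\fS^\mI_y=\sum_{w\in\mA(y)}\bigl(x_p\fS_w+x_q\fS_w\bigr).
\]
When $p=q$ the left side is $2^{-1}\cdot 2x_p\fS^\mI_y=x_p\fS_y^\mI$, so only one transition equation is used per atom. When $p>q$ we use both. Expanding each term with Theorem~\ref{t:monk} writes the product as a signed sum
\[
\sum_{w}\Bigl(\sum_{w\lessdot wt_{rs}}\fS_{wt_{rs}}-\sum_{w\lessdot wt_{hr}}\fS_{wt_{hr}}\Bigr),
\]
with $r\in\{p,q\}$, $s>r$ and $h<r$. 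The goal is to show this signed multiset of permutations equals
\[
\bigsqcup_{z\in\Phi^+(y,q)}\mA(z)\;-\;\bigsqcup_{z\in\Phi^-(y,p)}\mA(z).
\]

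The first step is to sort the covers $v=wt_{ab}$ into two classes: those with $v\in\mA(z)$ for some involution $z$, and those that are not atoms of any involution. For the second class, Theorem~\ref{t:atoms-exchange} gives a unique $u\neq w$ with $u\in\mA(y)$ and $u\lessdot v$. I would show that $u=vt_{a'b'}$, where the transposition $t_{a'b'}$ also has an endpoint in $\{p,q\}$ and the pair $(w,u)$ contributes $\fS_v$ with opposite signs, so these terms cancel. This reduces to a local check. Let $E$ be the union of the supports of the two transpositions together with their images under $y$. By the restriction property (Corollary~3.19 of~\cite{hamaker2018transition}, as in the proof of Theorem~\ref{t:atom-transposition}), $[w]_E$ and $[u]_E$ are atoms of $[y]_E$, with $|E|\le 6$. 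So the sign check becomes a finite case analysis over the relative positions of the cycle $(p,q)$ and the at most two other cycles involved, using Theorem~\ref{thm: involution atoms}.

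The second step handles the atom terms. By Theorem~\ref{t:atom-transposition}, every $v\in\mA(z)$ with $y\lessdot^\mI z$ arises as $wt_{ij}$ with $w\in\mA(y)$ and $t^\mI_{ij}(y)=z$. I would check against Figure~\ref{fig:I-bruhat-cover} that the pairs $(i,j)$ realizing $z\in\Phi^+(y,q)$, namely $i=q$, correspond exactly to transposition terms with left endpoint $p$ or $q$. I would likewise check that those realizing $z\in\Phi^-(y,p)$ correspond to terms with right endpoint $p$ or $q$. Covers $z=t^\mI_{ij}(y)$ with $\{i,j\}\cap\{p,q\}$ of the "wrong" type, for instance $j=q$ with $i<q$, or $i=p$ with $j>p$, should have their atoms appear once with each sign and cancel.

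The main obstacle is the multiplicity bookkeeping when $p>q$. A single $v\in\mA(z)$ may be produced twice, once from $x_p$ and once from $x_q$, or from two different atoms of $y$. One must show that the net coefficient is exactly $1$ for $z\in\Phi^+(y,q)$, exactly $-1$ for $z\in\Phi^-(y,p)$, and $0$ otherwise. I would handle this by the same local reduction to $[w]_E$, enumerating the six configurations of Figure~\ref{fig:I-bruhat-cover} in which one of the two cycles is $(p,q)$. The enumeration is finite and elementary but tedious. The uniqueness statements in Theorems~\ref{t:atom-transposition} and~\ref{t:atoms-exchange} are what guarantee that no further coincidences occur.
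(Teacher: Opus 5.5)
There is a genuine gap in your second step, where you assert that covers of the ``wrong'' type have their atoms appear once with each sign and cancel. Your examples of this type are $t^\mI_{iq}(y)$ with $i<q$ and $t^\mI_{pj}(y)$ with $j>p$. That step fails, and no local case analysis can rescue it. Under the paper's convention, $(p,q)\in\Cyc(y)$ means $y(q)=p$ with $q\le p$, which is how you read it, and with that convention the displayed identity is itself false.

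Take $y=2143$ and $(p,q)=(4,3)$. Then $\mA(y)=\{2143\}$, and Theorem~\ref{t:monk} gives
\[
(x_3+x_4)\fS_{2143}=\fS_{21534}+\fS_{21453}-\fS_{4123}-\fS_{3142}-\fS_{2341}-\fS_{2413}.
\]
On the other side, $\Phi^+(y,3)=\{21543\}$ and $\Phi^-(y,4)=\{4231\}$, with $\mA(21543)=\{21453,21534\}$ and $\mA(4231)=\{2341,3142,4123\}$. So the right side is missing $-\fS_{2413}=-\fS^\mI_{3412}$. This term comes from the crossing cover $3412=t^\mI_{23}(y)$, which is precisely one of your ``$j=q$, $i<q$'' covers, and it occurs only with a minus sign. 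Symmetrically, for $(p,q)=(2,1)$ the term $+\fS_{2413}$ from $t^\mI_{23}(y)$, an ``$i=p$, $j>p$'' cover, survives.

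In \cite{hamaker2018transition} the hypothesis is $p\le q=y(p)$, so $\Phi^+$ is built from the larger element of the cycle and $\Phi^-$ from the smaller. In the present notation the sets should therefore be $\Phi^+(y,p)$ and $\Phi^-(y,q)$. With that fix the example works, since $\Phi^+(y,4)=\{21543\}$ and $\Phi^-(y,3)=\{4231,3412\}$.

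With the roles corrected, your outline is sound and is the argument the paper gives for Theorem~\ref{t:mu-transition}, which specializes to the corrected statement when $\mu=(n)$. The ingredients line up as follows.
\begin{itemize}
\item The Monk terms with left endpoint in $\{p,q\}$, and those with right endpoint in $\{p,q\}$, are multiplicity-free by the relative order of the cycle's elements in inverse atoms.
\item Atom terms are matched with atoms of covers through Theorem~\ref{t:atom-transposition}.
\item Non-atom terms cancel in pairs by Theorem~\ref{t:atoms-exchange}. The refinement \cite{hamaker2018transition}*{Thm.~3.23(b)}, used in Corollary~\ref{c:strong-mu-exchange}, gives $i'\in\{j,y(j)\}$ and $j'\in\{i,y(i)\}$. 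This already puts the partner transposition's other endpoint in $\{p,q\}$ with the opposite orientation, so your $|E|\le 6$ enumeration is unnecessary.
\end{itemize}

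Even after the correction, you should drop the idea that covers from the ``other'' endpoint cancel. They do not cancel; they coincide with covers already in $\Phi^\pm$. In the example, $t^\mI_{13}(y)=t^\mI_{14}(y)=t^\mI_{24}(y)=4231$. The atoms $3142=2143\,t_{14}$ and $2341=2143\,t_{24}$ each appear once, with a minus sign, and $4231\in\Phi^-(y,3)$ via $t^\mI_{13}$. What you actually need from Figure~\ref{fig:I-bruhat-cover} is two equalities:
\begin{itemize}
\item the covers $t^\mI_{ab}(y)$ with $a<b$ and $a\in\{p,q\}$ are exactly $\Phi^+(y,p)$;
\item the covers with $b\in\{p,q\}$ are exactly $\Phi^-(y,q)$.
\end{itemize}
This is why the paper's $\Phi$ and $\Psi$ in the $\mu$-version are defined using both elements of the cycle.
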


   As mentioned in the introduction, there is a parallel story for the cohomology representatives of $H_\mu$-orbit closures in $\text{Fl}_n$.
    Up to rescaling, these representatives are the \definition{$\mu$--involution Schubert polynomials}, defined for $\tau \in \mI_\mu$ as 
        \begin{align*}
            \fS_\tau^\mu = \sum_{w \in \mA_\mu(\tau)} \fS_w.
         \end{align*}
    To recover the geometric meaning, one multiplies $\fS^\mu_\tau$ by $2^\kappa(\tau)$ where $\kappa(\tau)$ is the number of 2--cycles in $\tau$, which are distinct pairs in $\Cyc_\mu(\tau)$.
    Note $\fS^\mu_\pi$ can also be defined in terms of divided difference operators~\cite{can2015wonderful}, but we do not make use of this construction.

\section{Coxeter combinatorics for $\mu$--involutions}
\label{s:mu-involution-combinatorics}

\subsection{Atoms of $\mu$--involutions}
\label{ss:mu-atoms}
For $\mu$ a composition, we now characterize the $\mu$--atoms of $\pi \in \mI_\mu$ in terms of their inverses. Recall that $\mA(y)$ and $\mA^{-1}(y)$ denote the set of atoms and inverse atoms of an involution $y$ respectively. These definitions can be naturally extended to block involutions. Let $A \subset \NN$ be finite and $B \in S_A$ be a block involution.
The \newword{block atoms} for $B$ are elements in the set $
\mA(B) = \{Q \in S_A: \std(Q) \in \mA(\std(B)\},
$
that is the permutations in $S_A$ whose standardization is an atom for $\std(B)$.
Similarly, the \newword{inverse block atoms} for $B$ are elements in the set $\mathcal{A}^{-1}(B) = \{Q \in S_A: \std(Q) \in \mA^{-1}(\std(B))\}$, that is the permutations in $S_A$ whose standardization is an inverse atom for $\std(B)$.

\begin{thm}
\label{thm: mu involution atoms}
    Let $\mu \vDash n$ and $\pi \in \mathcal{I}_{\mu}$ with blocks $B_1, \dots,B_k$.
    Then
    \[
        \mA_\mu^{-1}(\pi) = \{Q_1 Q_2 \dots Q_k: Q_i \in \mA^{-1}(B_i) \text{ for } i\in [k]\}.
    \]
    where $Q_1Q_2 \dots Q_k$ is the concatenation of $Q_1,Q_2,\dots,Q_k$.
\end{thm}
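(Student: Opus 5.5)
The proof will identify $e\muprod w$ explicitly in terms of the blocks of $w^{-1}$. Then the statement reduces blockwise to the involution case, using the definition of $\mA(y)$ via $w^{-1}\circ w = y$ with $\ell(w)=\ellhat(y)$.

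\emph{Step 1: a key lemma.} For $u\in S_n$, cut the one-line notation of $u$ into consecutive words $Q_1,\dots,Q_k$ with $|Q_i|=\mu_i$. Let $\Phi_\mu(u)$ be the $\mu$--involution whose $i$th block $B_i$ has the same value set as $Q_i$ and satisfies
\[
\std(B_i)=\std(Q_i)^{-1}\circ \std(Q_i).
\]
This is an involution because $v^{-1}\circ v \in \mI$. The claim to prove is that $e\muprod w = \Phi_\mu(w^{-1})$ for every $w\in S_n$, where $w$ is read as a $0$--Hecke product. I would prove this by induction on the length of a $0$--Hecke word for $w$. The inductive step is to check that
\[
\Phi_\mu(w^{-1})\muprod s_i = \Phi_\mu\bigl((w\circ s_i)^{-1}\bigr).
\]
If $ws_i<w$, then $w\circ s_i=w$, and I must show the $\muprod s_i$ action is also trivial. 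Here $i{+}1$ precedes $i$ in $w^{-1}$. If they lie in different blocks, they lie in the same blocks of $\Phi_\mu(w^{-1})$ in the same order, since $\Phi_\mu$ preserves block value sets, so the action is trivial. If they lie in the same block, the action reduces to the standardized block identity $\std(B)\iprod s_r=\std(B)$, where $r=r_B(i)$. This identity holds because $s_r$ is a left descent of $\std(Q)$, so $s_r\circ\std(Q)=\std(Q)$.

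If $ws_i>w$, then $(ws_i)^{-1}=s_iw^{-1}$ swaps the values $i,i{+}1$, with $i$ preceding $i{+}1$ in $w^{-1}$. Between blocks, this is exactly the first case of $\muprod$. Within one block $B$, standardizing gives $\std(s_iQ)=s_r\std(Q)$. Then
\[
\std(s_iQ)^{-1}\circ\std(s_iQ) = s_r\circ \std(B)\circ s_r = \std(B)\iprod s_r,
\]
using associativity of the $0$--Hecke product. This matches the second case of $\muprod$. The main obstacle is checking that the relative-value index $r_B(i)$ in the definition of $\muprod$ agrees with the index $r$ coming from standardizing $Q$. This should hold because $B$ and $Q$ have the same value set.

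\emph{Step 2: length.} By Step 1, $\mA_\mu(\pi)$ consists of the $w$ with $\Phi_\mu(w^{-1})=\pi$ and $\ell(w)=\ellmu(\pi)$, where $\ellmu(\pi)$ is the minimum of $\ell(w)$ over all $w$ with $\Phi_\mu(w^{-1})=\pi$. This uses Proposition~\ref{p:mu-monoid} and the fact that any word is a $0$--Hecke word for its Demazure product, which has length at most the word length. Write $\ell(w^{-1})$ as the number of inversions between different blocks plus $\sum_i \ell(\std(Q_i))$. The condition $\Phi_\mu(w^{-1})=\pi$ fixes each block value set, so the between-block inversion count is determined by $\pi$. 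It also requires, independently for each $i$, that $\std(Q_i)^{-1}\circ\std(Q_i)=\std(B_i)$.

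\emph{Step 3: minimize blockwise.} Hence $\ellmu(\pi)$ equals the between-block inversion count plus $\sum_i\ellhat(\std(B_i))$. The minimizers are exactly the concatenations $Q_1\cdots Q_k$ in which each $\std(Q_i)^{-1}$ lies in $\mA(\std(B_i))$, that is, each $Q_i\in\mA^{-1}(B_i)$. This is precisely the description of $\mA_\mu^{-1}(\pi)$ claimed in the theorem.
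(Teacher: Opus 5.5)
Your strategy is sound and genuinely different from the paper's, but the definition of $\Phi_\mu$ has the $0$--Hecke product in the wrong order, and as written your key lemma is false. For $\mu=(n)$ one has $e\muprod w=w^{-1}\circ w$, while your $\Phi_{(n)}(w^{-1})$ has $\std(B_1)=w\circ w^{-1}$. Already for $n=4$ and $w=s_2s_1s_3=3142$ these are $4231$ and $3412$. With your literal definition three things break. The displayed within-block identity fails, since its left side equals $\std(Q)^{-1}\circ s_r\circ\std(Q)$. The descent argument in the trivial case no longer gives $s_r\circ\std(B)\circ s_r=\std(B)$. And Step~3 would produce block atoms instead of inverse block atoms.

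The fix is to set $\std(B_i)=\std(Q_i)\circ\std(Q_i)^{-1}$, and to make the same change in the display and in Step~2. With that convention all three steps are exactly right:
\begin{itemize}
\item $\std(s_iQ)\circ\std(s_iQ)^{-1}=s_r\circ\std(B)\circ s_r$ when $s_r\std(Q)>\std(Q)$.
\item When $s_r$ is a left descent, $s_r\circ\std(Q)=\std(Q)$ (and hence $\std(Q)^{-1}\circ s_r=\std(Q)^{-1}$) gives $s_r\circ\std(B)\circ s_r=\std(B)$.
\item The blockwise condition says precisely that $x=\std(Q_i)^{-1}$ satisfies $x^{-1}\circ x=\std(B_i)$. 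So the length minimizers are those with $\std(Q_i)^{-1}\in\mA(\std(B_i))$, which is what you conclude.
\end{itemize}

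The paper proceeds differently. It factors an inverse atom via Proposition~\ref{p:parabolic} into an $S_\mu$ part and a minimal coset representative. The $S_\mu$ part acts inside blocks and yields a $\mu$--involution whose blocks occupy consecutive value intervals. The coset representative only swaps values between blocks. The paper then argues that these parts have lengths $\ellmu^\mI(\pi)$ and $\ellmu^B(\pi)$ and reads off block atoms from the $S_\mu$ part. You instead compute $e\muprod w$ in closed form for every $w$, and obtain the atoms by minimizing $\ell(w^{-1})$, using that between-block inversions depend only on the block value sets.

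Your route is more self-contained and yields more:
\begin{itemize}
\item It explicitly verifies the separation of within-block and between-block moves, which the paper's proof only sketches.
\item It reproves Proposition~\ref{p:mu-monoid}, since the answer depends only on the Demazure product.
\item It gives $\ellmu(\pi)$ explicitly as the number of between-block inversions of $\pi$ plus $\sum_i\ellhat(\std(B_i))$.
\item It describes all $w$ with $e\muprod w=\pi$, not only the minimal ones.
\end{itemize}
The paper's argument is shorter and follows the existing proof for $w_0$ that it adapts. It also exhibits the split $\ellmu=\ellmu^\mI+\ellmu^B$ as a length-additive factorization of each atom.
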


\begin{proof}
    For $w \in \mA^{-1}_\mu(\pi)$, write $w = uv$ with $u \in S_\mu$ and $v \in S^\mu$ using Proposition~\ref{p:parabolic}.
    Then $w^{-1} = v^{-1} u^{-1} \in \mA_\mu(\pi)$ with $u^{-1} \in S_\mu$.
    Necessarily, $e \muprod u$ cannot swap values between blocks, so $u^{-1} \in \mA_\mu(\tau)$ where $\tau$ has blocks $B'_1, \dots, B'_k$.
    Since $\muprod$ acts on the left when swapping values between blocks, we see $\tau \muprod v^{-1}$ acts by swapping values between blocks in the same way as $v$.
    Therefore $\ell(v) = \ell^B_\mu(\pi)$, hence $\ell(u) = \ell^\mI_\mu(\pi)$.
    Since $w^{-1}$ is a $\mu$--atom  for $\pi$ and $v$ only swaps values between blocks, necessarily $\std(B_i) = \std(B_i')$ for all $i \in [k]$.
    Then $u$ splits into inverse block atoms $Q_1',\dots,Q_k'$ with $Q_i' \in \mA^{-1}(B_i')$ and $v$ permutes the values of each $Q'_i$ to produce $Q_i \in \mA^{-1}(B_i)$.
    A permutation of the form $Q_1 \dots Q_k$ with each $Q_i \in \mA(B_i)$ necessarily has the form $v^{-1}u^{-1}$ with $u \in \mA_\mu(\tau)$, so the opposite containment holds.
\end{proof}

\begin{exa}
    Let $\tau = 651|3|742$, then by Theorem~\ref{thm: mu involution atoms}, the set of inverse atoms is
    \begin{align*}
        \mathcal{A}_{\mu}^{-1}(\tau)= \{6153724, 6153472, 5613724, 5613472\},
    \end{align*}
    since $\mA^{-1}(651) = \{615,561\}$, $\mA^{-1}(3) = \{3\}$, and $\mA^{-1}(742) = \{724,472\}$.
    
    Therefore, the set of atoms is
    \[
    \mA_\mu(\tau) = \{2647315, 2745361, 3647125, 3745126\}.
    \]
\end{exa}

\subsection{From $\mu$ to $\nu$}
\label{ss:mu-to-nu}
In this subsection, we expand a $\mu$--involution Schubert polynomial as a positive and multiplicity-free sum of $\nu$--involution Schubert polynomials when $\nu$ refines $\mu$.
As a consequence of our argument, we also prove a strengthening of Theorem~\ref{t:mu-nu-cover}.

\begin{lem}
    \label{l: split atoms}
    Let $Q$ be a block inverse atom of size $m$, then for any $k \in [m]$, $Q(1) \cdots Q(k)$ and $Q(k+1) \cdots Q(m)$ are both block inverse atoms.
\end{lem}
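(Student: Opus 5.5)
We reduce to the standardized setting and write $Q$ via a parabolic factorization. Since $Q \in \mA^{-1}(B)$ depends only on $\std(Q)$, and since $\std(Q(1)\cdots Q(k)) = \std(\std(Q)(1)\cdots\std(Q)(k))$ (similarly for the suffix), we may assume $Q \in S_m$ with $Q \in \mA^{-1}(y)$ for some $y \in \mI_m$, i.e. $w := Q^{-1} \in \mA(y)$. Let $\nu = (k, m-k)$. Applying Proposition~\ref{p:parabolic} to $Q^{-1}$, write $Q = x\,p$ with $p \in S_\nu = S_k \times S_{m-k}$ and $x$ a minimal length coset representative, so that $\ell(Q) = \ell(x) + \ell(p)$. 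Concretely, $x$ is increasing on the positions $1,\dots,k$ and on $k{+}1,\dots,m$. Since $Q(i) = x(p(i))$, we get $\std(Q(1)\cdots Q(k)) = p_1$ and $\std(Q(k{+}1)\cdots Q(m)) = p_2$, where $p = p_1 \times p_2$ with $p_1 \in S_k$ and $p_2 \in S_{m-k}$ (the latter shifted).

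Next we show that $p^{-1}$ is an atom. We have $w = p^{-1}x^{-1}$ with $\ell(w) = \ell(p^{-1}) + \ell(x^{-1})$. Concatenating a reduced word $\bfb$ for $p^{-1}$ with one for $x^{-1}$ gives some $\bfa \in \mR(w) \subseteq \hat{\mR}(y)$. Since $\iprod$ is an action of the 0--Hecke monoid and $\bfa$ is a reduced involution word, every prefix of $\bfa$ is also a reduced involution word: a shorter word reaching the same intermediate involution would shorten $\bfa$. In particular $\bfb$ is a reduced involution word for $z := e \iprod s_{b_1} \iprod \cdots$. Hence $\bfb \in \mR(p^{-1})$ and $\ell(p^{-1}) = \ellhat(z)$, so $p^{-1} \in \mA(z)$. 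This uses the identity $\hat{\mR}(z) = \bigcup_{v \in \mA(z)} \mR(v)$ from the background.

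Finally we split across the two factors. All letters of $\bfb$ lie in $\{1,\dots,m-1\}\setminus\{k\}$. The $\iprod$-action of such generators never mixes $[k]$ with $[k{+}1,m]$, so $z = z_1 \times z_2$ with $z_i$ involutions on the two blocks. The letters acting on the two blocks commute, so we may reorder $\bfb$ as $\bfb_1\bfb_2$. Then $\bfb_1$ is a reduced word for $p_1^{-1}$ and a reduced involution word for $z_1$; reducedness follows since $\ellhat$ and $\ell$ are additive over the product, with $\ellhat(z_1 \times z_2) = \ellhat(z_1) + \ellhat(z_2)$. The same holds for $\bfb_2$ and $z_2$. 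Thus $p_1^{-1} \in \mA(z_1)$ and $p_2^{-1} \in \mA(z_2)$, so $p_1 \in \mA^{-1}(z_1)$ and $p_2 \in \mA^{-1}(z_2)$. Since these are the standardizations of the prefix and suffix of $Q$, both are block inverse atoms.

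The main obstacle is the middle step. It requires that a prefix of a reduced involution word is again reduced and yields an atom, and that $\iprod$ composes in the direction matching $w = p^{-1}x^{-1}$. I would check the conventions carefully: $y \iprod w = w^{-1}\circ y \circ w$, with words read left to right. I would also verify the additivity of $\ellhat$ over $S_k \times S_{m-k}$, which follows directly from the formula for $\ellhat$, since both $\ell$ and the number of cycles are additive. As a fallback, one could instead verify conditions (1)--(3) of Theorem~\ref{thm: involution atoms} directly for the prefix. However, that route requires choosing $z_1$ explicitly, which is awkward.
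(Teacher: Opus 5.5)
Your proof is correct, and it takes a genuinely different route from the paper's.

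The paper works combinatorially. It writes down an explicit candidate involution $B_L$: keep the cycles of $B$ with both entries in $L=\{Q(1),\dots,Q(k)\}$, and make every other value of $L$ a fixed point. It then checks the pairwise subword conditions of Theorem~\ref{thm: involution atoms} case by case (Figure~\ref{fig:Q-cycles}), and treats the suffix by a symmetry remark.

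You argue Coxeter-theoretically, in three steps:
\begin{enumerate}
\item Factor $Q^{-1}=p^{-1}x^{-1}$ with $p\in S_k\times S_{m-k}$ and $x$ increasing on each block.
\item Observe that a reduced word for $p^{-1}$ is a prefix of a reduced involution word for $y$, and so is itself a reduced involution word.
\item Split the $\iprod$-action of $S_k\times S_{m-k}$ into two commuting factors, noting that the standardizations of the prefix and suffix of $Q$ are exactly $p_1$ and $p_2$.
\end{enumerate}

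This buys several things. It avoids all case analysis and treats prefix and suffix uniformly. It is the same parabolic mechanism the paper uses for Theorem~\ref{thm: mu involution atoms}. It also gives Corollary~\ref{c: refine atoms} for an arbitrary refinement in one step rather than by iteration.

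What the paper's route buys is an explicit description of the involution attached to the prefix, which you only obtain implicitly as $z_1=p_1\circ p_1^{-1}$. For the downstream uses (Corollary~\ref{c: refine atoms}, and through it Theorem~\ref{t:mu-nu-expansion}), existence is all that is needed.

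One small point to tighten. Additivity of $\ell$ and $\ellhat$ alone does not split $\ell(p_1)+\ell(p_2)=\ellhat(z_1)+\ellhat(z_2)$ into two equalities. You also need $\ell(p_i)\ge\ellhat(z_i)$, which holds because $\mathbf{b}_i$ is an involution word for $z_i$. Alternatively, let $\mathbf{c}$ be a reduced word for $x^{-1}$. Then $\mathbf{b}_1\mathbf{b}_2\mathbf{c}$ and $\mathbf{b}_2\mathbf{b}_1\mathbf{c}$ are both reduced words for $Q^{-1}$, so your prefix argument applies to $\mathbf{b}_1$ and $\mathbf{b}_2$ directly.
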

\begin{proof}
    Let $B$ be the block involution for which $Q$ is a block inverse atom. 
    On the alphabet $L = \{Q(1),\dots ,Q(k)\}$, define $B_L$ where for $(i,j) \in \Cyc(B)$, if $i,j \in L$, then $(i,j)\in \Cyc(B_L)$, and all other elements in $L$ are fixed points in $\Cyc(B_L)$.
    We will show that $Q_L = Q(1) \cdots Q(k)$ is an inverse block atom of $B_L$ using  Theorem~\ref{thm: involution atoms}.
    For any pairs of cycles in $\Cyc(B_L)$ that are also in $\Cyc(B)$, the relative order of their subwords certainly respects the cycle structures since $Q$ is an inverse block atom.
    Therefore we only need to consider pairs of cycles $(i,j),(k,l) \in \Cyc(B)$ where $i \geq j$, $k \geq l$, $i,k \in L$ and at least one of $j,l$ is not in $L$.

\begin{figure}
\begin{center}
        \begin{tabular}{|c|c|c|c|}
            \hline
            $B$ cycles & $Q$ subwords & $Q_L$ subwords & $B_L$ cycles \\ \hline
            \cba{j}{k}{i} & $kij$, \st{$ijk$} & $ki$ & \ab{k}{i} \\ \hline
            \cdab{j}{l}{i}{k} & $ijkl$ & $ijk$ & \cba{j}{k}{l} \\ \hline
            \multirow{7}{*}{\dcba{j}{l}{k}{i}}& $ijkl$ & $ijk$ & \cba{j}{k}{i}\\ \cline{2-4}
            & $klij$ & $kli$ & \bac{l}{k}{i}\\ \cline{2-4}
            & \multirow{3}{*}{$kijl$} & $ki$ & \ab{k}{i}\\ \cline{3-4}
            & & $kij$ & \cba{j}{k}{i}\\ \hline
        \end{tabular}
    \end{center}
    \caption{The cases for two cycles in $B$ with one split when restricting to $B_L$.
    For each possible inverse block atom, the third column depicts the restriction to $Q_L$.
    }
    \label{fig:Q-cycles}
\end{figure}

    We depict these cases in Figure~\ref{fig:Q-cycles},
    omitting the trivial cases where $(i,j),(k,l)$ are completely disjoint ($i \geq j > k \geq l$).
    One can check that the fourth column contains exactly the involutions coming from the first column restricted to the values $\{i,j,k,l\}$ in $B_L$.
    A completely symmetric argument can be made to show that $Q(k+1) \cdots Q(m)$ is an inverse block atom for the block involution obtained from $B$'s restriction to $Q(k{+1})\dots Q(m)$.
    Therefore, the claim follows. 
\end{proof}

\begin{cor}
    \label{c: refine atoms}
    If $\nu$ refines $\mu$, then any inverse atom $w$ in $\mI_\mu$ is also an inverse atom in $\mI_\nu$.
\end{cor}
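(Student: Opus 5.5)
The plan is to combine Theorem~\ref{thm: mu involution atoms} with Lemma~\ref{l: split atoms}. Let $w$ be an inverse atom for some $\pi \in \mI_\mu$, where $\pi$ has blocks $B_1,\dots,B_k$. By Theorem~\ref{thm: mu involution atoms} we can write $w = Q_1 Q_2 \cdots Q_k$ with each $Q_i \in \mA^{-1}(B_i)$. Here $Q_i$ occupies exactly the positions $m_i+1,\dots,m_i+\mu_i$, so $Q_i$ and $B_i$ use the same set of values.

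Next we subdivide. Since $\nu$ refines $\mu$, the partial sums of $\mu$ are among those of $\nu$. Hence each interval of positions $\{m_i+1,\dots,m_i+\mu_i\}$ is a union of consecutive $\nu$-blocks of positions. We split each $Q_i$ accordingly by applying Lemma~\ref{l: split atoms} repeatedly. Each application cuts one block inverse atom into a prefix and a suffix, and both are again block inverse atoms, so induction on the number of cuts is enough. This writes $w = R_1 R_2 \cdots R_{\ell(\nu)}$, where each $R_j$ has length $\nu_j$ and is a block inverse atom for some block involution $C_j$ on the value set of $R_j$.

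Now we build the $\nu$--involution. Define $\sigma \in S_n$ as the concatenation $C_1 C_2 \cdots C_{\ell(\nu)}$. This is well defined because $C_j$ uses the same values as $R_j$, and these value sets partition $[n]$. Each $C_j$ is a block involution of length $\nu_j$, so $\sigma \in \mI_\nu$. Applying Theorem~\ref{thm: mu involution atoms} with $\nu$ in place of $\mu$ gives $w \in \mA_\nu^{-1}(\sigma)$, so $w$ is an inverse atom in $\mI_\nu$.

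The only real point to check is that the lemma produces an honest block involution on the value set of each piece; that is exactly the content of Lemma~\ref{l: split atoms} through its construction of $B_L$. The rest is bookkeeping about positions.
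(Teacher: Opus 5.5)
Your proposal is correct and follows the paper's route: write $w$ as a concatenation of block inverse atoms via Theorem~\ref{thm: mu involution atoms}, then cut along the $\nu$-block boundaries by repeated use of Lemma~\ref{l: split atoms}. The only difference is that you spell out the final step the paper leaves implicit, namely assembling the $\nu$--involution $\sigma$ from the block involutions $C_j$ and applying the reverse containment of Theorem~\ref{thm: mu involution atoms} for $\nu$.
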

\begin{proof}
By Theorem~\ref{thm: mu involution atoms}, every inverse atom is a concatenation of its block inverse atoms, so the statement follows from repeated application of Lemma~\ref{l: split atoms}.
\end{proof}

\begin{lem}
    \label{l: mu nu atom containment}
    Let $\pi \in \mI_\mu$ and $\tau \in \mI_\nu$ where $\nu$ refines $\mu$. Then either $ \mA_\nu^{-1}(\tau) \subseteq \mA_\mu^{-1}(\pi)$ or $\mA_\nu^{-1}(\tau) \cap \mA_\mu^{-1}(\pi) = \varnothing$.
\end{lem}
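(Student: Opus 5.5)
Suppose the intersection is nonempty, say $w \in \mA_\nu^{-1}(\tau) \cap \mA_\mu^{-1}(\pi)$, and let $w' \in \mA_\nu^{-1}(\tau)$ be arbitrary. We must show $w' \in \mA_\mu^{-1}(\pi)$. By Theorem~\ref{thm: mu involution atoms}, membership in $\mA_\mu^{-1}(\pi)$ is a block-by-block condition. Each $\mu$-block is a union of consecutive $\nu$-blocks, since $\nu$ refines $\mu$. So it suffices to treat one $\mu$-block at a time. Concretely, the reduction is as follows. Let $B$ be a block of $\pi$ occupying positions $P$, and let $C_1,\dots,C_r$ be the blocks of $\tau$ occupying the same positions. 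Then $w$ restricted to $P$ is an inverse block atom of $B$ that is also a concatenation $R_1\cdots R_r$ with $R_j \in \mA^{-1}(C_j)$. Similarly, $w'$ restricted to $P$ is a concatenation $R'_1\cdots R'_r$ with $R'_j\in\mA^{-1}(C_j)$. We must show $R'_1\cdots R'_r \in \mA^{-1}(B)$.

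We move from $w$ to $w'$ using the connectivity of Theorem~\ref{t: atom local move}, applied within each $\nu$-block. For each $j$, the graph on $\mA^{-1}(C_j)$ (after standardization) is connected by the moves $[\cdots c,a,b\cdots]\sim[\cdots b,c,a\cdots]$. Hence $R'_1\cdots R'_r$ is reached from $R_1\cdots R_r$ by a sequence of such moves, each performed on three consecutive entries lying inside a single $\nu$-block. It then suffices to prove the following key claim: if $Q\in\mA^{-1}(B)$ and $Q'$ is obtained from $Q$ by one move $cab\leftrightarrow bca$ ($a<b<c$) on consecutive entries, then $Q'\in\mA^{-1}(B)$. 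By induction along the path, the claim gives $w'|_P \in \mA^{-1}(B)$.

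To prove the key claim we use Theorem~\ref{thm: involution atoms}, in its inverse-atom formulation, together with the characterization of edges in the proof of \cite{HMP2}*{Thm.~6.10}. There, moves of this shape between words with the \emph{same} underlying involution correspond exactly to the configuration in which $(c,a)$ is a cycle and $b$ is a fixed point or belongs to a cycle nested in the appropriate way. The main obstacle is to show that the block cycles of $C_j$ force the corresponding cycle structure in $B$.

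For the key claim, the plan is a case analysis on how $a,b,c$ are paired in $B$, using the hypothesis that $Q$ restricted to the $\nu$-block is an atom of $C_j$. By Lemma~\ref{l: split atoms}, the cycles of $C_j$ are exactly the cycles of $B$ with both ends in that $\nu$-block, and all other values are fixed points. A move $cab\to bca$ within $C_j$ needs $(c,a)$ to be a cycle of $C_j$, because a descent $c>a$ adjacent to $b$ must be a cycle to be valid in both words. Hence $(c,a)$ is a cycle of $B$. The remaining question is the partner of $b$ in $B$, which may lie outside the $\nu$-block. We check the possibilities against the subword table of Figure~\ref{fig:Q-cycles}: the partner of $b$ is either $b$ itself, a value less than $a$, a value between $a$ and $c$, or a value greater than $c$. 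For each possibility, we verify that the allowed relative orders ($ijkl$, $kijl$, $klij$ for nesting, $ijkl$ for crossing, and so on) are preserved under swapping $cab$ and $bca$, or else that the configuration contradicts $Q\in\mA^{-1}(B)$. I expect the crossing case with the partner of $b$ outside the $\nu$-block to be the delicate one; there I would use condition (2) of Theorem~\ref{thm: involution atoms} for $B$ to rule out the bad orderings.
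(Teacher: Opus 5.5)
Your proposal is correct and follows the paper's route. You reduce block by block via Theorem~\ref{thm: mu involution atoms}, connect $w$ to $w'$ by moves inside $\nu$-blocks via Theorem~\ref{t: atom local move}, and note that these moves stay inside $\mu$-blocks.

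The one addition is that you explicitly verify that a single move $cab\leftrightarrow bca$ preserves membership in $\mA^{-1}(B)$. The paper uses this step implicitly, since its stated form of Theorem~\ref{t: atom local move} only asserts connectivity. Your case check goes through. The crossing cases you flagged as delicate are in fact immediate: the forced crossing subword would either put $b$ on the wrong side of $c$ or put $b$'s partner between two adjacent entries.
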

\begin{proof}
    Suppose $w \in \mA_\nu^{-1}(\tau) \cap \mA_\mu^{-1}(\pi)$.
    Then for any $w' \in \mA_\nu^{-1}(\tau)$, Theorem~\ref{thm: mu involution atoms} and Theorem~\ref{t: atom local move} (applied to the standardization of block inverse atoms) imply $w'$ can be obtained from $w$ through a series of transformations from~\eqref{eq:atom-transformation} in the blocks of $\nu$.
    Since $\nu$ refines $\mu$, each such equivalence occurs in a block of $\mu$, so $w' \in \mA_\mu^{-1}(\pi)$, hence $ \mA_\nu^{-1}(\tau) \subseteq \mA_\mu^{-1}(\pi)$.
\end{proof}

A restatement of Lemma~\ref{l: mu nu atom containment} in terms of Bruhat order is:

\begin{cor}
\label{c: intersect = contain}
    For $\pi \in \mI_\mu$ and $\tau \in \mI_\nu$ where $\nu$ refines $\mu$ and $\ell(\nu) = \ell(\mu)+1$, we have $\tau \lessdot^\mu \pi$ if some $w \in \mA_\nu(\tau)$ is also in $\mA_\mu(\pi)$.
\end{cor}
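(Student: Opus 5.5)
The plan is to combine Lemma~\ref{l: mu nu atom containment} with Theorem~\ref{t:mu-nu-cover}. Suppose $\pi \in \mI_\mu$, $\tau \in \mI_\nu$, $\nu$ refines $\mu$ with $\ell(\nu) = \ell(\mu)+1$, and some $w \in \mA_\nu(\tau) \cap \mA_\mu(\pi)$ exists.

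\emph{Step 1: pass to inverses.} Inversion is a bijection, so $w^{-1} \in \mA_\nu^{-1}(\tau) \cap \mA_\mu^{-1}(\pi)$. This intersection is therefore nonempty.

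\emph{Step 2: upgrade intersection to containment.} By Lemma~\ref{l: mu nu atom containment}, a nonempty intersection forces $\mA_\nu^{-1}(\tau) \subseteq \mA_\mu^{-1}(\pi)$. Inverting again gives $\mA_\nu(\tau) \subseteq \mA_\mu(\pi)$.

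\emph{Step 3: apply the cover criterion.} We now have $\nu$ refining $\mu$, the length condition $\ell(\nu) = \ell(\mu)+1$ (understood as in Theorem~\ref{t:mu-nu-cover}, where the finer composition has one more part), and containment of the atom sets. Theorem~\ref{t:mu-nu-cover} then gives $\tau \lessdot^\mu \pi$.

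I do not expect a real obstacle; all the combinatorial work is in Lemma~\ref{l: mu nu atom containment}. The only care needed is bookkeeping. Theorem~\ref{t:mu-nu-cover} is stated with the roles of the indexing compositions for $\mA$ written slightly differently. I would read it as: the atoms of the finer-indexed element $\tau$ (with respect to $\nu$) lie inside the atoms of $\pi$ (with respect to $\mu$). This matches the $\leq_\mA$ definition given just after it, and it is exactly what Step 2 produces. I would state that reading explicitly when invoking it.
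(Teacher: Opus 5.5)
Your proposal is correct and matches the paper's argument: the paper presents this corollary as a restatement of Lemma~\ref{l: mu nu atom containment} (a nonempty intersection of inverse atom sets forces containment) combined with the cover criterion of Theorem~\ref{t:mu-nu-cover}. Reading that theorem as you do, with $\ell(\nu)=\ell(\mu)+1$ and $\mA_\nu(\tau)\subseteq\mA_\mu(\pi)$, is the right way to fix its typos, and it agrees with the definition of $\leq_\mA$.
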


We are now prepared to prove the main result of this subsection.

\begin{thm}
\label{t:mu-nu-expansion}
Let $\pi \in \mI_\mu$, and suppose $\nu$ refines $\mu$, then
    \begin{align*}
        \fS_\pi^\mu  = \sum_{\tau \in \mI_\nu, \, \tau \leq_\mA \pi} \fS_\tau^\nu
    \end{align*}
\end{thm}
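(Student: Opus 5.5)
The plan is to reduce the identity to a statement about atom sets. Both sides are expanded as sums of Schubert polynomials via the definition $\fS^\mu_\tau = \sum_{w\in\mA_\mu(\tau)}\fS_w$. Since Schubert polynomials are linearly independent, it suffices to show that $\mA_\mu(\pi)$ is the disjoint union of the sets $\mA_\nu(\tau)$ over $\tau\in\mI_\nu$ with $\tau\leq_\mA\pi$. Equivalently, passing to inverses, $\mA_\mu^{-1}(\pi) = \bigsqcup_{\tau\leq_\mA\pi}\mA_\nu^{-1}(\tau)$.

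First, disjointness. Every $w\in S_n$ is a $\nu$--atom of at most one $\nu$--involution, because $w$ determines $e\circ_\nu w$: a reduced word of $w$ is a reduced $\nu$--word of the element it produces. Thus the sets $\mA_\nu(\tau)$ for distinct $\tau\in\mI_\nu$ are pairwise disjoint. Second, containment of the union in $\mA_\mu(\pi)$. This holds by the definition of $\leq_\mA$, which requires $\mA_\nu(\tau)\subseteq\mA_\mu(\pi)$.

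The substantive step is covering. Take $w\in\mA_\mu^{-1}(\pi)$. By Corollary~\ref{c: refine atoms}, $w$ is an inverse atom of some $\nu$--involution $\tau$. Concretely, by Theorem~\ref{thm: mu involution atoms} and Lemma~\ref{l: split atoms}, cutting each block inverse atom $Q_i$ of $w$ at the positions dictated by $\nu$ produces block inverse atoms for the restricted block involutions. These assemble via Theorem~\ref{thm: mu involution atoms} (for $\nu$) into $w\in\mA_\nu^{-1}(\tau)$. Then $w\in\mA_\nu^{-1}(\tau)\cap\mA_\mu^{-1}(\pi)$, so Lemma~\ref{l: mu nu atom containment} upgrades this nonempty intersection to full containment $\mA_\nu^{-1}(\tau)\subseteq\mA_\mu^{-1}(\pi)$. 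Hence $\tau\leq_\mA\pi$ and $w$ lies in the union.

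The main point needing care is the use of Corollary~\ref{c: refine atoms}. It asserts that $w$ is an inverse atom in $\mI_\nu$, meaning an element of $\mA^{-1}_\nu(\tau)$ for the specific $\tau$ whose $\nu$--blocks are the restricted block involutions. I would make this explicit by defining $\tau$ blockwise. We must also check that the resulting $\tau$ is a genuine $\nu$--involution, i.e.\ that the concatenation of the positions/values is consistent with Theorem~\ref{thm: mu involution atoms} applied to $\nu$. This should follow directly because Theorem~\ref{thm: mu involution atoms} characterizes inverse atoms purely blockwise.

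With covering, containment and disjointness established, summing $\fS_w$ over $w\in\mA_\mu(\pi)$ regroups as $\sum_{\tau\leq_\mA\pi}\sum_{w\in\mA_\nu(\tau)}\fS_w = \sum_{\tau\leq_\mA\pi}\fS^\nu_\tau$. This gives the multiplicity-free expansion.
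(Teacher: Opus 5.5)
Your proposal is correct and follows the paper's proof: the same double containment of inverse atom sets, with Corollary~\ref{c: refine atoms} placing each $w \in \mA_\mu^{-1}(\pi)$ in some $\mA_\nu^{-1}(\tau)$, and the ``nonempty intersection implies containment'' principle then giving $\tau \leq_\mA \pi$. Your explicit disjointness check, and your direct appeal to Lemma~\ref{l: mu nu atom containment} instead of Corollary~\ref{c: intersect = contain} (which the paper cites even though that corollary assumes $\ell(\nu)=\ell(\mu)+1$, a hypothesis not needed here), make the argument slightly more precise but do not change the route.
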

\begin{proof}
We show by double containment of inverse atoms.
Let 
\[
S = \bigsqcup_{\tau \in \mI_\nu, \, \tau \leq_\mA \pi}\mA_\nu^{-1}(\tau).
\]
By definition of $\leq_\mA$ we see $S$ is a subset of $\mA_\mu^{-1}(\pi)$.
For any $w \in \mA_\mu^{-1}(\pi)$, $w$ is also an inverse atom of $\tau$ for some $\tau \in \mI_\nu$ by Corollary~\ref{c: refine atoms}.
Then by Corollary~\ref{c: intersect = contain} $\mA_\nu^{-1}(\tau)$ is a subset of $\mA_\mu^{-1}(\pi)$, so $\tau \leq_\mA \pi$, hence $w \in S$.
Therefore $\mA_\mu^{-1}(\pi)$ is a subset of $S$.
\end{proof}

\begin{exa}
    Let $\pi  =  75421|63$, so $\mu = (5,2)$. Then for any composition $\nu$ that refines $(5,2)$, we can expand $\fS_\pi^\mu$ into a positive and multiplicity-free sum of $\nu$-involution Schubert polynomials. For example, suppose that $\nu = (2,3,1,1)$. The inverse atoms of $\pi$
    \begin{align*}
        \mA_\mu^{-1}(\pi) = \{4527163, 5247163, 5271463, 4571263, 5712463, 4715263, 7145263, 7152463\}
    \end{align*}
    are all inverse atoms of some $\nu$-involution from the following set
    \begin{align*}
        B := \{45|721|6|3, 52|741|6|3, 57|124|6|3, 47|152|6|3, 71|542|6|3 \} \subseteq \mI_\nu.
    \end{align*}
    By Lemma~\ref{l: mu nu atom containment}, $B$ is also exactly the set of $\nu$-involutions $\tau$ such that $\tau \leq_{\mA} \pi$. Therefore, by Theorem~\ref{t:mu-nu-expansion},
    \begin{align*}
        \fS_{75421|63}^\mu = \fS_{45|721|6|3}^\nu + \fS_{52|741|6|3}^\nu + \fS_{57|124|6|3}^\nu + \fS_{47|152|6|3}^\nu + \fS_{71|542|6|3}^\nu\, .
    \end{align*}
\end{exa}

\subsection{Transpositions for $\mu$--Bruhat order}
\label{ss:transposition}

We first introduce notation for covers in $\mu$--Bruhat order.
Let $\pi$ and $\tau$ be $\mu$--involutions so that $\pi \lessdot^\mu \tau$.
By Theorem~\ref{t:mu-bruhat-def}, for each $w \in \mA_\mu(\tau)$ there exists $v \in \mA_\mu(\pi)$ so that $v \lessdot w$.
Necessarily, $w = v t_{ij}$ for some $i,j$.
We define transposition like operators on $\mu$--involutions using this relationship.

\begin{defn}
\label{d:mu-bruhat-cover}
    For $\pi$ a $\mu$--involution, if there exists $v \in \mA_\mu(\pi)$ and $\tau$ a $\mu$--involution so that $v \lessdot v t_{ij}$ and $v t_{ij} \in \mA_\mu(\tau)$ we define $t_{ij}^\mu(\pi) = \tau$.
    Otherwise, define $t_{ij}^\mu(\pi) = \pi$. 
\end{defn}

A priori, it is unclear that the operators $t_{ij}^\mu$ are well-defined.
It could be the case that there exist atoms $u,v \in \mA_\mu(\pi)$ so that $u \lessdot ut_{ij}$ and $v \lessdot vt_{ij}$ where $ut_{ij}$ and $vt_{ij}$ are atoms for different $\mu$--involutions.
We show that this cannot occur.

\begin{lem}
    \label{l:mu-bruhat-cover}
    For $\mu \vDash n$ and $i,j \in [n]$, each operator $t_{ij}^\mu: \mI_\mu \to \mI_\mu$ is well-defined.
\end{lem}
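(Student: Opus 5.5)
We need to show: if $u,v \in \mA_\mu(\pi)$ with $u \lessdot ut_{ij}$, $v \lessdot vt_{ij}$, and $ut_{ij} \in \mA_\mu(\tau)$, $vt_{ij}\in\mA_\mu(\tau')$, then $\tau=\tau'$.

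The plan is to pass to inverse atoms and use the block description of Theorem~\ref{thm: mu involution atoms}. Right multiplication $vt_{ij}$ swaps positions $i,j$ of $v$, so on the inverse side $t_{ij}v^{-1}$ swaps the values $i$ and $j$ in the word $v^{-1}=Q_1\cdots Q_k$. The key observation is that $\mu$--atoms of a given $\tau$ are determined, position-wise, by $\tau$: the value set and block involution structure of the $r$th block of $\tau$ are read off from the $r$th segment of any inverse atom. Indeed, if $w^{-1}=Q'_1\cdots Q'_k$ with $Q'_r\in\mA^{-1}(B'_r)$, then the set of letters of $Q'_r$ equals the set of values of $B'_r$, and $\std(Q'_r)\in\mA^{-1}(\std(B'_r))$ determines $\std(B'_r)$ since atom sets of distinct involutions are disjoint (the atom $w$ determines $y=w^{-1}\circ w$). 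So $\tau$ is recovered from any one of its inverse atoms, as the $\mu$--involution whose $r$th block has the value set of the $r$th segment and standardization $(\std(Q'_r))^{-1}\circ\std(Q'_r)$.

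Using this, I split into cases according to whether $i$ and $j$ lie in the same block segment of $u^{-1}$ (equivalently of $v^{-1}$, since both are inverse atoms of $\pi$ and segment value sets are determined by $\pi$).

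\emph{Case 1: different segments.} Swapping $i,j$ in $u^{-1}$ and in $v^{-1}$ moves $i$ into the segment formerly containing $j$ and vice versa. The new segment value sets are the same for both, namely those of $\pi$ with $i,j$ exchanged. The standardizations of the segments might change. I expect the cover condition $\ell(ut_{ij})=\ell(u)+1$, together with $ut_{ij}$ being a $\mu$--atom, to force no value strictly between $i$ and $j$ in each affected segment. If it is hard to prove this directly, I will instead argue that whenever the resulting word is an inverse $\mu$--atom, each new block involution is obtained from the corresponding block of $\pi$ by relabeling $i\leftrightarrow j$. This works because the cycle conditions of Theorem~\ref{thm: involution atoms} on $\std(Q_r)$ and on the swapped word are compatible only when the relative orders agree. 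Either way, $\tau=\tau'$ is determined by $\pi$ and $\{i,j\}$.

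\emph{Case 2: same segment $r$.} All other segments are untouched, so $\tau$ and $\tau'$ agree with $\pi$ outside block $r$. Within block $r$, standardize: $\std(Q_r)$ and $\std(Q''_r)$ are atoms (inverted) for the same involution $y=\std(B_r)$, and swapping the standardized values $i',j'$ gives covers $x\lessdot xt_{i'j'}$ with $xt_{i'j'}$ atoms of $z$ and $z'$. By Theorem~\ref{t:atom-transposition}, the atoms of $z$ are exactly $\{xt: x\in\mA(y),\,t^\mI_t(y)=z\}$. I would show via Incitti's description (Figure~\ref{fig:I-bruhat-cover}) that if $xt_{i'j'}\in\mA(z)$ with a length-one increase, then $z=t^\mI_{i'j'}(y)$. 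This follows by restricting to $E=\{i',j',y(i'),y(j')\}$ and using \cite{hamaker2018transition}*{Cor.~3.19(b)}, exactly as in the proof of Theorem~\ref{t:atom-transposition}. Hence $z=z'=t^\mI_{i'j'}(y)$.

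The main obstacle is this last identification, which amounts to checking on at most four letters that an atom cover via $t_{i'j'}$ lands in an atom of a uniquely determined involution. I would do it by a finite case check over the configurations of two cycles in Figure~\ref{fig:I-bruhat-cover}, together with the inverse-atom subword patterns listed after Theorem~\ref{thm: involution atoms}.
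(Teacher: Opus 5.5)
Your preliminary observation (a $\mu$--involution is recovered from any one of its inverse $\mu$--atoms) is correct. Your Case~2 is essentially the paper's same-block argument: standardize and reduce to Theorem~\ref{t:atom-transposition}.

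The gap is Case~1, which carries the real content of the lemma. Both mechanisms you propose there are false.

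The cover condition only forbids values strictly between $i$ and $j$ from sitting \emph{positionally} between $i$ and $j$. Such values can still lie before $i$ in the segment of $i$, or after $j$ in the segment of $j$, and then the standardization of the affected block genuinely changes. The paper's own example shows this. For $\pi = 2|176|543$, the inverse atom $2176534$ becomes $5176234$ when $2$ and $5$ are swapped; this is a cover, and $t^\mu_{25}(\pi) = 5|176|234$.
\begin{itemize}
\item The third segment contains $3,4$, strictly between $2$ and $5$, so your first expectation fails.
\item The new block is not a relabeling of $543$. Relabeling the cycles $(5,3),(4,4)$ by $5\mapsto 2$ would pair $2$ with $3$ (block $324$), but $234$ has only fixed points, so your fallback fails too.
\end{itemize}
More generally (Figure~\ref{f:nested-mu-covers}), the cycle through $i$ can be broken into two fixed points, and nested pairs can become crossing or disjoint. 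So the heuristic that the atom conditions are ``compatible only when the relative orders agree'' does not hold either.

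What is missing is an argument that the new block involution, whatever it is, does not depend on the chosen atom. The paper gets this by localizing to pairs of cycles $c_1\ni i$ and $c_2$ in the block of $i$, handling the block of $j$ by reverse complement.
\begin{itemize}
\item For non-nested pairs, Theorem~\ref{thm: involution atoms} permits a single subword, so the swapped subword is forced.
\item For nested pairs, one lists the permitted subwords and discards those violating the cover condition. One then checks that all surviving swapped subwords are inverse atoms for one and the same pairwise cycle structure.
\end{itemize}
You need this finite check to close Case~1. Alternatively, you could give a genuine proof of the rule ``exchange $i,j$, then break any pair that becomes increasing into fixed points'' from the paper's remark on the operators $v_{ij}$.
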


\begin{proof}
    Let $\pi \in \mI_\mu$ and $v,v' \in \mA_\mu^{-1}(\pi)$ so that $t_{ij}v$ and $t_{ij}v'$ are also inverse $\mu$--atoms, say for $\sigma,\tau \in \mI_\mu$, respectively.
    The result will follow by showing $\Cyc_\mu(\sigma) = \Cyc_\mu(\tau)$, which we detect by applying the characterization for $\mu$--atoms in Theorem~\ref{thm: mu involution atoms} to $t_{ij}v$ and $t_{ij}v'$.
    Blocks that do not contain $i$ or $j$ are unchanged, so $\sigma,\tau$ agree on these blocks.
    We now consider the blocks containing $i$ and $j$.
    If the values $i,j$ are in the same block $B$, the result follows from Theorem~\ref{t:atom-transposition} since $v|_B$ and $v'|_B$ are inverse block atoms for the same block involution.

    Now let $i$ be in block $B_1$ and $j$ be in block $B_2$ of $\pi$.
    Consider $c_1, c_2 \in \Cyc_{B_1}(\pi)$.
    In $v$ and $v'$, the possible subwords of the values in these cycles is determined by Theorem~\ref{thm: involution atoms}.
    Since $t_{ij}v$ and $t_{ij}v'$ are inverse $\mu$--atoms, the subwords for these same values correspond to pairwise cycle structures in their associated $\mu$--involutions.
    We must show the resulting pairwise cycle structures are the same.

    When $c_1,c_2$ do not contain $i$, their corresponding subwords are unchanged.
    Therefore, we can reduce to the case where $c_1 = (i,h)/(h,i)$ and $c_2 = (k,l)$.
    Further, when $c_1$ and $c_2$ do not nest the values $h,i,k,l$ form a unique subword in $v$ and $v'$, so $t_{ij}v$ and $t_{ij}v'$ have the same subword for values $h,j,k,l$.
    Therefore, the cycles containing these values in $\sigma$ and $\tau$ must be the same.
    The possible cases where $c_1$ and $c_2$ nest are treated in Figure~\ref{f:nested-mu-covers}.
    Since $v \lessdot t_{ij}v$, only certain subwords can appear in the third column.
    Note that all the subwords in the fourth column correspond to the unique cycle structure in the fifth column.
    By taking the reverse complement, the same argument holds for $B_2$ so result follows.
\end{proof}

\begin{figure}
\small
\begin{tabular}{ |c||c|c|c|c| } 
\hline
Relative order & $\pi$--cycles & $\mA_\mu^{-1}(\pi)$--subwords & $\mA_\mu^{-1}(\tau)$--subwords & $\tau$--cycles\\
\hline
$l < h = i < k < j$ & $\cba{l}{i}{k}$ & $kli$, \st{$ikl$} & $klj$ & $\bac{l}{k}{j}$\\ 
\hline
$l < h < i < k < j$ &  $\dcba{l}{h}{i}{k}$ & $klih$,\st{$iklh$},\st{$ihkl$} & $kljh$ & $\cdab{l}{h}{k}{j}$\\
\hline
$i<l = k< j <h$ & $\cba{i}{k}{h}$ & $khi$, \st{$hik$} & $khj$ & $\acb{k}{j}{h}$\\
\hline
$i<l = k< h<j $ & $\cba{i}{k}{h}$  & $khi$, \st{$hik$} & $khj$ & \abc{k}{h}{j}\\
\hline
$i<l<j < k <h$ &  $\dcba{i}{l}{k}{h}$ & $klhi$,\st{$khil$},\st{$hikl$} & $klhj$ & $\cdab{l}{j}{k}{h}$\\
\hline
$i<l < k< j <h$ &  $\dcba{i}{l}{k}{h}$ & $klhi$,\st{$khil$},\st{$hikl$} & $klhj$ & $\badc{l}{k}{j}{h}$\\
\hline
$i<l < k <h<j$ &  $\dcba{i}{l}{k}{h}$ & $klhi$,\st{$khil$},\st{$hikl$} & $klhj$ & $\bacd{l}{k}{h}{j}$\\
\hline
$l<i<h<j<k$ &  $\dcba{l}{i}{h}{k}$ & $hkli,klhi,hikl$ & $hjkl,hklj,klhj$ & $\dbca{l}{h}{j}{k}$\\
\hline
$l<i<h<k<j$ &  $\dcba{l}{i}{h}{k}$ & $hkli,klhi$, \st{$hikl$} & $hklj,klhj$ & $\cbad{l}{h}{k}{j}$\\
\hline
\end{tabular}
\caption{The action when $t^\mu_{ij}(\pi) = \tau \neq \pi$ for nested cycles containing $i$ when $i,j$ are in different blocks in $\pi$.
The third column depicts the possible subwords of the values in these cycles in $v\in \mA_\mu^{-1}(\pi)$ where $t_{ij}v \in \mA_\mu^{-1}(\tau)$, with crossed out subwords excluded by the fact that $v \lessdot t_{ij}v$.
The fourth column depicts the corresponding subwords in $t_{ij}v$.
We omit cases where there are no intermediate values between $i$ and $j$.
\label{f:nested-mu-covers}}
\end{figure}

Note Figure~\ref{f:nested-mu-covers} gives a partial combinatorial description of the  operators $t_{ij}^\mu$ acting on the cycles of a $\mu$--involution when $i,j$ are in different blocks.
In Figure~\ref{f:other-mu-covers}, we complete this characterization.
For $i,j$ in the same block, the operators acts as for involutions so the combinatorial description appears in Figure~\ref{fig:I-bruhat-cover}.

Using the $t^\mu_{ij}$ operators, we obtain a direct analogue of Theorem~\ref{t:atom-transposition} for $\mu$--involutions.
\begin{theorem}
        \label{t:mu-atom-transposition}
        Let $\pi, \tau \in \mI_\mu$ such that $\pi \lessdot^\mu \tau$, then
        \[
        \mA_\mu(\tau) = \{w t_{ij}: w \in \mA_\mu(\pi),\, t_{ij}^\mu(\pi) = \tau, w \lessdot wt_{ij}\}.
        \]
    \end{theorem}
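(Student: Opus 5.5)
Both containments will be proved using the reformulation of Definition~\ref{d:mu-bruhat-cover} in terms of inverse atoms, together with the block factorization of Theorem~\ref{thm: mu involution atoms}.

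\textbf{The containment $\supseteq$.} Let $w \in \mA_\mu(\pi)$ with $w \lessdot wt_{ij}$ and $t^\mu_{ij}(\pi) = \tau$. We must show $wt_{ij} \in \mA_\mu(\tau)$. By Lemma~\ref{l:mu-bruhat-cover}, $t^\mu_{ij}$ is well defined. So there is some witness $v \in \mA_\mu(\pi)$ with $vt_{ij} \in \mA_\mu(\tau)$ and $v \lessdot vt_{ij}$. We then compare $(wt_{ij})^{-1} = t_{ij}w^{-1}$ with $t_{ij}v^{-1}$ block by block.

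\emph{Unaffected blocks.} Blocks of $\pi$ containing neither $i$ nor $j$ are unchanged.

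\emph{$i,j$ in the same block $B$.} Restrict to $B$ and apply Theorem~\ref{t:atom-transposition} to the standardized block involution. Since $w|_B$ is an inverse block atom and the cover condition restricts to $B$, we get $t_{ij}w^{-1}|_B \in \mA^{-1}(\text{corresponding block of }\tau)$.

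\emph{$i,j$ in different blocks.} Here we use the case analysis of Figure~\ref{f:nested-mu-covers} together with its complement, Figure~\ref{f:other-mu-covers}. Each allowed subword in the third column of $w^{-1}$ (allowed because $w \lessdot wt_{ij}$) becomes, after swapping $i$ and $j$, a subword compatible with the single cycle structure of $\tau$ listed in the fifth column. Pairwise conditions are exactly what Theorem~\ref{thm: involution atoms} requires. Hence the concatenation $t_{ij}w^{-1}$ satisfies the block atom conditions for every block of $\tau$, and Theorem~\ref{thm: mu involution atoms} gives $wt_{ij} \in \mA_\mu(\tau)$.

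\textbf{The containment $\subseteq$.} Let $u \in \mA_\mu(\tau)$.

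\emph{Producing $w$ with $w \lessdot u$.} By Theorem~\ref{t:mu-bruhat-def}, since $\pi \le^\mu \tau$ and $\ellmu(\pi) = \ellmu(\tau) - 1$, each reduced word of $u$ (which lies in $\mR_\mu(\tau)$) has a subword in $\mR_\mu(\pi)$. That subword is obtained by deleting one letter, so it is a reduced word for some $w \in \mA_\mu(\pi)$ with $w \lessdot u$. Writing $u = wt_{ij}$, Definition~\ref{d:mu-bruhat-cover} gives $t^\mu_{ij}(\pi) = \tau$, so $u$ lies in the right-hand set.

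\emph{Main obstacle.} The difficulty is the step that deleting a letter from a reduced $\mu$-word of $u$ yields a reduced $\mu$-word of an element of $\mA_\mu(\pi)$, rather than a non-reduced $\mu$-word. Theorem~\ref{t:mu-bruhat-def} only guarantees some subword in $\mR_\mu(\pi)$, but its length is $\ell(u)-1$, so exactly one letter is removed. The resulting word is reduced as an ordinary word because it lies in $\mR_\mu(\pi) = \bigsqcup \mR(w)$, so $w$ has length $\ell(u)-1$ and $w = ut$ for a transposition $t$. This handles the obstacle directly.

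\emph{Secondary obstacle.} The real labor is then in the $\supseteq$ direction, specifically the different-block case. There one needs that every $w \in \mA_\mu(\pi)$ satisfying $w \lessdot wt_{ij}$ has allowed subwords determining the same $\tau$. This is the content of the figures. I would rely on the same table-checking used in the proof of Lemma~\ref{l:mu-bruhat-cover}, extended to the non-nested cases of Figure~\ref{f:other-mu-covers}.
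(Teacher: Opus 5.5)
Your proposal follows the paper's proof. The containment $\supseteq$ comes from the block-by-block table check behind Lemma~\ref{l:mu-bruhat-cover}, and you reasonably re-run it for an arbitrary $w$ with $w \lessdot wt_{ij}$, since well-definedness alone does not literally say every such $wt_{ij}$ is a $\mu$--atom. The containment $\subseteq$ comes from the subword characterization in Theorem~\ref{t:mu-bruhat-def}, and like the paper you take for granted that a cover $\pi \lessdot^\mu \tau$ has $\ell_\mu(\tau) = \ell_\mu(\pi)+1$, which is what makes the subword a single-letter deletion.
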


    \begin{proof}
        By Lemma~\ref{l:mu-bruhat-cover}, the right-hand side is contained in $\mA_\mu(\tau)$.
        The opposite containment follows by the subword characterization of $\leq^\mu$ from Theorem~\ref{t:mu-bruhat-def}.
    \end{proof}

\begin{rem}
    One can extend the definition of $t^\mu_{ij}$ to transposition-like operators $v_{ij}$ on $\mu$--involutions as follows.
    For $\tau \in \mI_\mu$, let $v_{ij}$ act on $\tau$ by exchanging the values $i,j$ in $\Cyc_\mu(\tau)$, then breaking any resulting pairs that are increasing into two fixed points, reordering the cycles in each block based on the $\mu$-cycle order.
    To be consistent with the operators $t^\mu_{ij}$ when they act non-trivially, we additionally require for $a < b < c <d$ in the same block that $v_{ad}$ sends the cycle pair $(b,a),(d,c)$ to $(d,a),(b,b),(c,c)$ and the cycle pair $(c,a),(d,b)$ to $(d,a),(c,b)$.
    Similarly, for $a<b<c$ in the same block, $v_{ac}$ sends the cycle pairs $(b,a),(c,c)$ and $(a,a),(c,b)$ to $(c,a)(b,b)$.
\end{rem}
    
\begin{exa}
     Let $\tau = 651|3|742$. It's cycle notation is $(6,1)(5,5)|(3,3)|(7,2)(4,4)$.
     \begin{align*}
         t^\mu_{15}(\tau) =v_{15}(\tau) &= (6,5)(1,1)|(4,4)|(7,2)(3,3) = 165|4|732\\
         t^\mu_{34}(\tau) =v_{34}(\tau) &= (6,1)(5,5)|(4,4)|(7,2)(3,3) = 651|4|732
     \end{align*}
     Additionally, we have $t^\mu_{25}(\tau) = \tau$ but
     \begin{align*}
        t^\mu_{25} (\tau) \neq  v_{25}(\tau) &= (6,1)(2,2)|(3,3)|(4,4)(7,5) = 621|3|475.\\
     t^\mu_{17}(\tau) \neq v_{17}(\tau) &= (5,5)(6,6)(7,7)|(3,3)|(1,1)(2,2)(4,4) = 567|3|124.
     \end{align*}
Similarly, $\pi = 7|318945|62$ has cycle notation is $(7,7)|(3,1)(8,4)(9,5)|(6,2)$.
We compute
     \begin{align*}
         t^\mu_{18}(\pi) =v_{18}(\pi) &= (7,7)|(8,1)(3,3)(4,4)(9,5)|(6,2) = 7|834915|62,\\
         t^\mu_{49}(\pi) = v_{49}(\pi) &= (7,7)|(3,1)(9,4)(8,5)|(6,2) = 7|319854|62.
     \end{align*}
\end{exa}
\begin{exa}
    The $\mu$-Bruhat covers allow us to compute (inverse) atoms recursively using Theorem~\ref{t:atom-transposition}.
    Let $\pi=2|176|543$ and $\tau=5|176|234$.
    Then $\pi \lessdot^\mu \tau$ with $t_{25}^\mu(\pi) = \tau$ being the only transposition that connects these two.
    Therefore, since 
    \[
    \mA_\mu^{-1}(\pi) = \{2176453,2176534\},
    \]
    we see $\mA_\mu^{-1}(\tau) = \{5176234\}$ by applying $t_{25}$ on the left to $2176453,2176534$ and keeping only the ones that give Bruhat covers.
\end{exa}

\begin{figure}
    \small
    \begin{tabular}{ |c||c|c|c|c| } 
    \hline
    Relative order & $\pi$--cycles & $\mA_\mu^{-1}(\pi)$--subwords & $\mA_\mu^{-1}(\tau)$--subwords & $\tau$--cycles\\
    \hline
    $l=k<i<h<j$ & $\acb{k}{i}{h}$ & $khi$ & $khj$ & $\abc{k}{h}{j}$\\ 
    \hline
    $l<k<i<h<j$ & $\badc{l}{k}{i}{h}$ & $klhi$ & $klhj$ & $\bacd{l}{k}{h}{j}$\\ 
    \hline
    $l<i<k<j<h$ & $\cdab{l}{i}{k}{h}$ & $klhi$ & $klhj$ & $\badc{l}{k}{j}{h}$\\ 
    \hline
    $l<i<k<h<j$ & $\cdab{l}{i}{k}{h}$ & $klhi$ & $klhj$ & $\bacd{l}{k}{h}{j}$\\ 
    \hline
    \end{tabular}
    \caption{Non-trivial actions of $t_{ij}^\mu$ for $i,j$ in different blocks for pairs of cycles not nested.
    In this chart, we only depict cases where the pairwise structure of the cycle containing $i$ or its counterpart changes.}
    \label{f:other-mu-covers}
\end{figure}

Using Theorem~\ref{thm: mu involution atoms}, we prove an analogue of the Exchange Property for $\mu$--involutions.
For $\bfa = (a_1,\dots, a_p)$ a word, say $(\bfa,i)$ is \newword{nearly $\mu$--reduced} for $\tau \in \mI_\mu$ if $\bfa$ is not $\mu$--reduced and $(a_1,\dots,\widehat{a_i}, \dots, a_p) \in \mR_\mu(\tau)$. 
We first show nearly $\mu$--reduced words that are reduced fail to be $\mu$--reduced within a single block.

\begin{lem}
\label{l:nearly-reduced-same-block}
    Let $(\bfa,j)$ be nearly $\mu$--reduced for $\tau \in \mI_\mu$ so that $\bfa \in \mR(w)$ for some $w \in S_n$.
    Then there exists $v \in \mA_\mu(\tau)$ with $v = w t_{ik} \lessdot w$ so that $i,k$ are in the same block of $\tau$.
    
\end{lem}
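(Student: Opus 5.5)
We argue by contradiction: if the swapped values lie in different blocks, then $w$ itself is a $\mu$--atom, which forces $\bfa$ to be $\mu$--reduced. Write $\bfa=(a_1,\dots,a_p)\in\mR(w)$ and $\bfa'=(a_1,\dots,\widehat{a_j},\dots,a_p)\in\mR_\mu(\tau)$. Let $v=s_{a_1}\cdots\widehat{s_{a_j}}\cdots s_{a_p}$.

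First we pin down $v$. Since $\bfa'$ is a $\mu$--reduced word for $\tau$, it lies in $\mR(v')$ for some $v'\in\mA_\mu(\tau)$, and necessarily $v'=v$. Deleting one letter from a reduced word multiplies by a reflection, so $v=wt_{ik}$ for some $i<k$. Since $\ell(v)=p-1=\ell(w)-1$, we get $v\lessdot w$.

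Next we pass to inverses, where Theorem~\ref{thm: mu involution atoms} applies. Here $v^{-1}=t_{ik}w^{-1}$, so $w^{-1}$ is obtained from $v^{-1}$ by exchanging the values $i$ and $k$. By Theorem~\ref{thm: mu involution atoms}, $v^{-1}=Q_1\cdots Q_m$ with $Q_r\in\mA^{-1}(B_r)$, where $B_1,\dots,B_m$ are the blocks of $\tau$. The cover $v\lessdot w$ translates into two facts:
\begin{itemize}
\item $i$ occurs before $k$ in $v^{-1}$;
\item no value strictly between $i$ and $k$ occurs positionally between them.
\end{itemize}
The claim is that $i$ and $k$ lie in the same $Q_r$, equivalently in the same block of $\tau$. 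Suppose instead that $i\in Q_a$ and $k\in Q_b$ with $a<b$ (the only possibility, since $i$ comes first).

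We aim to show that $w^{-1}$ is an inverse $\mu$--atom of some $\sigma\in\mI_\mu$. Exchanging $i$ and $k$ turns $Q_a$ into $Q_a'$ and $Q_b$ into $Q_b'$, and all other blocks are unchanged. By Theorem~\ref{thm: mu involution atoms} it suffices to show $\std(Q_a')$ and $\std(Q_b')$ are inverse atoms of involutions. If so, $w\in\mA_\mu(\sigma)$, and since $\bfa\in\mR(w)$ we get $\bfa\in\mR_\mu(\sigma)$, so $\bfa$ is $\mu$--reduced. This contradicts the assumption that $(\bfa,j)$ is nearly $\mu$--reduced.

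The check on $Q_a'$ uses the cover condition. $Q_a$ lies entirely before $Q_b$, so every value of $Q_a$ strictly between $i$ and $k$ must sit to the left of $i$. Replacing $i$ by $k$ therefore changes $\std(Q_a)$ only by raising the rank of the last element among those values. In one-line terms this is a left multiplication of $\std(Q_a)$ by a cycle on consecutive ranks, all of whose other entries precede the moved entry. The symmetric statement holds for $Q_b$: values between $i$ and $k$ in $Q_b$ lie to the right of $k$, and replacing $k$ by $i$ lowers its rank. We then verify the three conditions of Theorem~\ref{thm: involution atoms} pairwise on cycles, using the new values to define the new cycles. Concretely, we rewrite the relevant pair of cycles of $B_a$ (or $B_b$) in the form in which Theorem~\ref{thm: involution atoms} is stated, with $i$ replaced by $k$ (or $k$ by $i$).

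The pairwise check should reduce to the finite case analysis already carried out in Lemma~\ref{l: split atoms} and Figures~\ref{f:nested-mu-covers} and~\ref{f:other-mu-covers}. Those tables list, for cycles containing $i$ paired with another cycle of its block, the admissible subwords of $v^{-1}$. Each admissible subword, after the replacement, is again a valid subword pattern for some pairwise cycle structure. Cycles not containing $i$ (resp.\ $k$) keep their relative order, so only pairs involving the moved value matter.

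The main obstacle is exactly this verification. Raising $i$ to $k$ can change whether the cycle containing it nests with, crosses, or is disjoint from another cycle of $B_a$, and the new subword must still be allowed for the new structure. In particular, we must rule out a value $h$ of $Q_a$ with $i<h<k$ paired in a cycle whose pattern becomes forbidden. If the direct check stalls, the fallback is to run the $0$--Hecke $\mu$--action along $\bfa$ and locate the first letter $a_m$ at which the action fails to grow. We would show that this letter acts between blocks only when it is the one swapping the values that end up as $i$ and $k$, and there the between-block action is plain left multiplication, which does not lose length.
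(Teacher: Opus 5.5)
Your reduction is sound and follows the paper's own route. You assume the exchanged values $i<k$ of $v^{-1}$ lie in different blocks, use $v\lessdot w$, and conclude that $w$ is a $\mu$--atom, so $\bfa$ would be $\mu$--reduced; the paper asserts this step in a single line. The gap is that the one nontrivial claim, that $\std(Q_a')$ and $\std(Q_b')$ are inverse atoms, is never proved. You defer it to a pairwise check that ``should reduce'' to the tables, plus a fallback, and carry out neither. Citing Figures~\ref{f:nested-mu-covers} and~\ref{f:other-mu-covers} here would be circular: they were compiled in the proof of Lemma~\ref{l:mu-bruhat-cover} under the hypothesis that $t_{ij}v$ is already an inverse $\mu$--atom. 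A pairwise check also needs the new involution in hand, and it is not obtained by substituting $k$ for $i$. For example, take $\mu=(4,2)$, $v^{-1}=4612|53\in\mA_\mu^{-1}(6421|53)$, $i=2$, $k=5$. The new block $4615$ is an inverse block atom for the cycles $(6,1),(4,4),(5,5)$, so the $2$--cycle $(4,2)$ through $i$ breaks into fixed points.

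The missing step follows from the observation you already made, with no case analysis. Let $r$ be the rank of $i$ in $Q_a$ and $c$ the number of values of $Q_a$ in $(i,k)$. These values have ranks $r+1,\dots,r+c$ and all lie left of $i$, so
\[
\std(Q_a') = s_{r+c-1}\cdots s_{r+1}s_r\,\std(Q_a).
\]
Each left multiplication here lowers length by one: before applying $s_{r+m}$, the entry in $i$'s position is $r+m$ and $r+m+1$ sits to its left. Hence $\std(Q_a)^{-1}=\std(Q_a')^{-1}s_{r+c-1}\cdots s_r$ with lengths adding. So some reduced word of the atom $\std(Q_a)^{-1}$ has a reduced word of $\std(Q_a')^{-1}$ as a prefix. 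Every reduced word of an atom is a reduced involution word. Every prefix of a reduced involution word is again one, since each letter must raise $\ellhat$ by exactly one. Therefore $\std(Q_a')^{-1}$ is an atom of the involution reached by that prefix.

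The same argument handles $Q_b'$. Let $q$ be the rank of $k$ in $Q_b$ and $d$ the number of values of $Q_b$ in $(i,k)$; these lie to the right of $k$, and $\std(Q_b')=s_{q-d}\cdots s_{q-1}\,\std(Q_b)$, again with each factor lowering length. Theorem~\ref{thm: mu involution atoms} then makes $w$ a $\mu$--atom, and your contradiction goes through. The fallback via the $0$--Hecke action is not needed.
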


\begin{proof}
    Since $(\bfa,j)$ is nearly $\mu$--reduced for $\tau$, $v = s_{a_1} \dots \widehat{s_{a_j}} \dots s_{a_p} \in \mA_\mu(\tau)$.
    Then for $t_{ik} = s_{a_p} \dots s_{a_{j+1}} s_{a_j} s_{a_{j+1}} \dots s_{a_p}$ we see $w = vt_{ik}$.
    If positions $i$, $k$ were in different blocks of $\mu$, then by Theorem~\ref{thm: mu involution atoms} we see $v(i)$ and $v(k)$ are in different blocks of $\mu$.
    Since $\bfa$ is not $\mu$--reduced, this implies $v(i) > v(k)$ so $w < v$, hence $\bfa$ is not reduced.
    However, by assumption $\bfa$ is reduced so $i,k$ must be in the same block.
\end{proof}

We now prove our exchange lemma for $\mu$--involutions.
\begin{thm}
    \label{t:mu-exchange}
    Let $(\bfa,m)$ be nearly $\mu$--reduced for $\tau \in \mI_\mu$.
    Then there exists unique $k \neq m$ so that $(\bfa,k)$ is nearly $\mu$--reduced for $\tau$.
\end{thm}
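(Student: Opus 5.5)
Write $\bfa = (a_1,\dots,a_p)$ and $v = s_{a_1}\cdots\widehat{s_{a_m}}\cdots s_{a_p} \in \mA_\mu(\tau)$, so that $\ell(v) = p-1 = \ellmu(\tau)$. Set $w = s_{a_1}\cdots s_{a_p}$, the ordinary product. The proof splits according to whether $\bfa$ is reduced as an ordinary word.

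\emph{Case 1: $\bfa$ is not reduced.} Here $w$ has length $p-2$. Since $v$ is reduced of length $p-1$, we have $w = v t$ with $w \lessdot v$. By the classical Exchange Property (Lemma~\ref{l:perm-exchange}, refined to the strong form for a reduced word with one extra letter), there is a unique $k \neq m$ with $s_{a_1}\cdots\widehat{s_{a_k}}\cdots s_{a_p} = v$. Equivalently, there are exactly two indices whose deletion gives $v$: namely $m$ and $k$.

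For this $k$, the word with $a_k$ deleted is a reduced word for $v$, hence lies in $\mR_\mu(\tau)$. So $(\bfa,k)$ is nearly $\mu$--reduced.

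For uniqueness, let $k'$ be any index such that deleting $a_{k'}$ gives a $\mu$--reduced word for $\tau$. This is in particular an ordinary reduced word of some $v' \in \mA_\mu(\tau)$. Deleting a letter from $\bfa$ gives $w t'$ for a reflection $t'$. Since $\ell(v') = p-1 > \ell(w)$, we get $v' = w t' \gtrdot w$.

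Now $w$ is not itself an atom for anything of the right length. I would use the $0$--Hecke structure to show $v' = v$: the product $s_{a_1}\circ\cdots\circ s_{a_p}$ equals $v'$ as well, since deleting a redundant letter from a word with a reduced $(p-1)$--subword computes the Demazure product. Both $v$ and $v'$ arise this way, so $v = v'$, and uniqueness reduces to the classical statement.

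\emph{Case 2: $\bfa$ is reduced.} Lemma~\ref{l:nearly-reduced-same-block} gives $v = w t_{ik} \lessdot w$ with positions $i,k$ in a single block of $\tau$. By Theorem~\ref{thm: mu involution atoms}, the $\mu$--atoms are exactly the concatenations of block inverse atoms. Hence $w$ is not a $\mu$--atom of anything, and its failure is localized to that block.

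Pass to the block $B$ containing $i,k$ and standardize. The restriction of $v$ is an atom for $\std(B)$, the corresponding restriction of $w$ covers it, and that restriction of $w$ is not an atom for any involution. (If it were, the concatenation would make $w$ a $\mu$--atom and $\bfa$ would be $\mu$--reduced.)

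Theorem~\ref{t:atoms-exchange} then gives a unique $u \neq v|_B$ with $u \lessdot w|_B$ and $u$ an atom of $\std(B)$. Lift $u$ back, keeping the other blocks as in $v$, to get $u' \in \mA_\mu(\tau)$ with $u' \lessdot w$. Since $\bfa$ is reduced, the elements $x \lessdot w$ correspond bijectively to the deletion positions in $\bfa$. So $u'$ corresponds to a unique index $k \neq m$, and $(\bfa,k)$ is nearly $\mu$--reduced.

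For uniqueness, any index $k'$ with $(\bfa,k')$ nearly $\mu$--reduced gives an element $w t' \lessdot w$ lying in $\mA_\mu(\tau)$. By Lemma~\ref{l:nearly-reduced-same-block}, $t'$ is a transposition within a block. It must be the same block as $B$, since the block-atom structure of $w$ outside $B$ is already valid and only $B$ fails. The uniqueness in Theorem~\ref{t:atoms-exchange} then forces the element to be $v$ or $u'$.

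\emph{Main obstacle.} The delicate step is this block-localization in Case 2. I need to show that Theorem~\ref{thm: mu involution atoms} really decouples the blocks. That is, I must check that $w$'s restrictions to the other blocks are inverse block atoms of the same block involutions as in $\tau$, and that covers $x \lessdot w$ via a within-block transposition correspond exactly to covers of the restricted block permutation. I would verify this through the inverse-atom description, where $t_{ik}$ acting on positions in one block of $w$ acts by left multiplication on a single factor $Q_j$ of the concatenation, leaving the other factors untouched.
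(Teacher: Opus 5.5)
Your proposal is correct and follows the paper's proof. It uses the same split on whether $\bfa$ is reduced, with Lemma~\ref{l:nearly-reduced-same-block} and Theorem~\ref{thm: mu involution atoms} localizing to one block where Theorem~\ref{t:atoms-exchange} supplies the second index; your Demazure-product argument in Case 1 and your exclusion of other blocks in Case 2 make explicit uniqueness steps the paper only asserts. One harmless slip: in Case 1, $\ell(w)=p-2$ and $w\lessdot v$ need not hold (e.g.\ $\mu=(1^3)$, $\bfa=(1,2,2,1)$, $m=2$ gives $w=e$), but none of your later steps depend on that claim.
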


\begin{proof}
    Let $v = s_{a_1} \dots \widehat{s_{a_m}} \dots s_{a_p} \in \mA^{-1}_\mu(\tau)$.
    If $\bfa$ is not reduced, by Lemma~\ref{l:perm-exchange} (Deletion Property), there exists $k \neq m$ so that $(\bfa,k)$ is nearly reduced for $v$.
    In fact, $k$ is unique so the result follows.

    If $\bfa \in \mR(w)$ for $w \in S_n$, Lemma~\ref{l:nearly-reduced-same-block} shows $w = t_{ij} v $ with $i,j$ in the same block $B$ of $\mu$.
    By Theorem~\ref{t:atoms-exchange} and the definition of Bruhat covers, there exist unique $i'<j'$ so that 
    \begin{equation}
        \label{eq:transposition-pair}
        v \mid_B \; \neq t_{i,j}t_{i',j'} (v\mid_B) = (t_{i,j} t_{i',j'
    }v) \mid_B \in \mA^{-1}(\tau \mid_B).
    \end{equation}
    Then by Theorem~\ref{thm: mu involution atoms} we see $v' = t_{i,j} t_{i',j'
    }v \in \mA^{-1}_\mu(\tau)$. Furthermore, $v' \lessdot t_{ij}v$, so there exists $k$ such that 
    $v' = s_{a_1} \dots \widehat{s_{a_k}} \dots s_{a_p}$.
    Therefore, $(\bfa,k)$ is nearly $\mu$--reduced for $\tau$.
    Since $i',j'$ can be identified from $k$, their uniqueness implies the uniqueness of $k$.
\end{proof}

The following is a strengthening of Theorem~\ref{t:mu-exchange} mirroring~\cite{hamaker2018transition}*{Thm.~3.23}.

\begin{cor}
\label{c:strong-mu-exchange}
    Fix $\tau \in \mI_\mu$ and recall $y_\tau$ is the involution so that $\Cyc_\mu(\tau) = \Cyc(y_\mu)$.
    Let $v \in \mA_\mu(\tau)$ and $w$ \textbf{not} a $\mu$--atom so that $v \lessdot w = v t_{ij}$.
    Then $i < j$ are in the same block $B$ of $\tau$.
    Furthermore, there exist unique numbers $i' < j'$ so that $v' = w t_{i'j'} \in \mA_\mu(\tau)$ satisfying $i' \in \{j,y_\tau(j)\}$ and $j' \in \{i,y_\tau(i)\}$.
\end{cor}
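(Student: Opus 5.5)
The argument has two parts. First I show that $i,j$ lie in the same block. Then I transfer the block-level statement, which is \cite{hamaker2018transition}*{Thm.~3.23}, to $\mu$--atoms through Theorem~\ref{thm: mu involution atoms}.

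\emph{Step 1: $i$ and $j$ are in the same block.} Since $w = vt_{ij}$ swaps the values in positions $i<j$, this is the same as saying the inverses satisfy $w^{-1} = t_{v(i)v(j)}v^{-1}$, with the swap acting on values in one-line notation. By Theorem~\ref{thm: mu involution atoms}, $v^{-1} = Q_1\cdots Q_k$ is a concatenation of inverse block atoms, and the entries of $Q_r$ are exactly the values of the $r$th block of $\tau$.

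The key point is that $v^{-1}$ is then an atom in the parabolic coset sense: positions $m_r+1,\dots,m_r+\mu_r$ of $v^{-1}$ carry the values of block $r$. So I will read the positions $i,j$ of $v$ as values of $v^{-1}$ that lie in blocks $B_a,B_b$ of $\tau$.

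Suppose $a\neq b$. Swapping the values $i$ and $j$ in $v^{-1}$ gives a word whose segments are still the standardizations of the $Q_r$, except that $Q_a$ and $Q_b$ have one value replaced. Since only relative order matters for block atoms, each segment is still an inverse block atom for some block involution. Concretely, the segment containing the new value is an inverse block atom of the block involution obtained by relabeling $i\leftrightarrow j$ in that block's cycles.

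By Theorem~\ref{thm: mu involution atoms}, $w^{-1}$ is therefore an inverse $\mu$--atom of the resulting $\mu$--involution, provided that relabeling keeps each block a valid block involution. It does, because standardization is preserved exactly when no relative order in that block changes. This argument needs some care. Relative orders within $Q_a$ may change, but $\std(Q_a')$ is still an inverse atom for the standardization of the relabeled block, because the relabeled block is defined from $Q_a'$ directly. It is cleaner to argue as in Lemma~\ref{l:nearly-reduced-same-block}: any concatenation of inverse block atoms is an inverse $\mu$--atom by the theorem. So $w$ is a $\mu$--atom, which is a contradiction. Hence $a=b$, and $i<j$ lie in the same block $B$.

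\emph{Step 2: reduce to the involution exchange.} Restrict to $B$. Here $v^{-1}|_B = Q_a$ is an inverse block atom of $B$, while $t_{v(i)v(j)}Q_a$ is a Bruhat cover of it. That cover is not an inverse block atom of any block involution, since otherwise Theorem~\ref{thm: mu involution atoms} would again make $w$ a $\mu$--atom. After standardizing, Theorem~\ref{t:atoms-exchange}, in its strong form \cite{hamaker2018transition}*{Thm.~3.23}, provides a unique $u\neq \std(Q_a)$ covered by $\std(t\,Q_a)$ with $u\in\mA^{-1}(\std(B))$. That theorem also specifies the transposition: it is $t_{i'j'}$ with $i'\in\{j,y(j)\}$ and $j'\in\{i,y(i)\}$, where $y$ is the involution and the indices are translated back through $\std$.

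Transporting back, set $v' = wt_{i'j'}$. It agrees with $v$ outside block $B$, and inside $B$ its inverse restricts to an inverse block atom of $B$. By Theorem~\ref{thm: mu involution atoms}, $v'\in\mA_\mu(\tau)$. The involution on $B$ is exactly $y_\tau$ restricted to the values of $B$, which gives the stated form $i'\in\{j,y_\tau(j)\}$, $j'\in\{i,y_\tau(i)\}$.

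\emph{Uniqueness.} Any other $v''\lessdot w$ in $\mA_\mu(\tau)$ has the form $wt$. By Theorem~\ref{thm: mu involution atoms}, $t$ cannot move values between blocks, since blocks of inverse atoms have fixed value sets, and it cannot change blocks other than $B$. So $t$ acts within $B$, and uniqueness in \cite{hamaker2018transition}*{Thm.~3.23} finishes the proof.

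\emph{Main obstacle.} The delicate part is Step 1, together with matching the position/value conventions when translating \cite{hamaker2018transition}*{Thm.~3.23}, which is stated for atoms rather than inverse atoms, and its index conditions through standardization. I would carefully verify, using Theorem~\ref{thm: mu involution atoms}, that a value swap across two blocks always yields a concatenation of inverse block atoms.
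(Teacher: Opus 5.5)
The gap is in Step~1. Your Step~2 and uniqueness argument follow the paper's route. The paper gets existence and uniqueness of $i',j'$ from Theorem~\ref{t:mu-exchange}, whose proof restricts to $B$, applies Theorem~\ref{t:atoms-exchange}, and reassembles with Theorem~\ref{thm: mu involution atoms}. It then reads off $i'\in\{j,y_\tau(j)\}$, $j'\in\{i,y_\tau(i)\}$ from part~(b) of the exchange theorem of~\cite{hamaker2018transition} applied to $B$.

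For Step~1 the paper uses Lemma~\ref{l:nearly-reduced-same-block}. It applies the lemma to a reduced word of $w$ from which deleting one letter gives a reduced word of $v$; this word is nearly $\mu$--reduced because $w$ is not a $\mu$--atom. You instead try to show directly that a cross-block cover of a $\mu$--atom is a $\mu$--atom, but none of the justifications you give establishes this.
\begin{itemize}
\item Replacing $i$ by $j$ in $Q_a$ (and $j$ by $i$ in $Q_b$) \emph{does} change relative order, so ``only relative order matters'' does not apply.
\item The ``relabel $i\leftrightarrow j$ in the cycles'' claim is false. For $\mu=(2,1)$ and $v^{-1}=21|3$, swapping the values $1,3$ is a Bruhat cover giving $23|1$. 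But $23$ is an inverse block atom of the identity block, not of the relabeled block $32$.
\item Defining the block ``from $Q_a'$ directly'' is circular. It presupposes that $\std(Q_a')$ is an inverse atom of \emph{some} involution, which is exactly what must be shown.
\end{itemize}
That presupposition fails for general swaps, and your Step~1 never uses $v\lessdot w$. For example, from $v^{-1}=132|4$, swapping the values $1,4$ raises length and gives $432|1$, yet $\std(432)=321$ is not an inverse atom of any involution. So the verification you defer under ``Main obstacle'' cannot succeed as you state it.

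The missing idea is to use the cover condition: no value strictly between $i$ and $j$ sits strictly between them in $v^{-1}$. Since $i$ precedes $j$ in $v^{-1}$, $i$ lies in an earlier block $a$ and $j$ in a later block $b$. Then every entry of $Q_a$ with value in $(i,j)$ lies left of $i$, and every such entry of $Q_b$ lies right of $j$.

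Hence $\std(Q_a')$ arises from $\std(Q_a)$ by repeatedly swapping the moving value with the next larger value, which sits to its left. For $Q_b$, you swap with the next smaller value, which sits to its right. Each such left multiplication by a simple transposition lowers length by one. So $\std(Q_a')^{-1}$ and $\std(Q_b')^{-1}$ lie below the atoms $\std(Q_a)^{-1}$ and $\std(Q_b)^{-1}$ in right weak order.

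Prefixes of atoms are atoms of some involution. Indeed, each letter raises $\hat{\ell}$ by at most one, so in a reduced involution word every letter raises it, and every prefix is again reduced. Thus $w^{-1}$ is a concatenation of inverse block atoms, and $w$ is a $\mu$--atom by Theorem~\ref{thm: mu involution atoms}, a contradiction. With this inserted, or with Step~1 replaced by the appeal to Lemma~\ref{l:nearly-reduced-same-block}, your proof goes through.

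One small slip: $w=vt_{ij}$ gives $w^{-1}=t_{ij}v^{-1}$, which swaps the \emph{values} $i,j$ of $v^{-1}$. It is not $t_{v(i)v(j)}v^{-1}$. Your Step~1 body uses the correct form, but the wrong one reappears as $t_{v(i)v(j)}Q_a$ in Step~2.
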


\begin{proof}
    The existence and uniqueness of $i',j'$ is equivalent to Theorem~\ref{t:mu-exchange}.
    From its proof, we see $i,j$ are in the same block.
    The final part is~\cite{hamaker2018transition}*{Thm.~3.23~(b)} applied to $B$.
\end{proof}

The precise values of $i' <j'$ in Corollary~\ref{c:strong-mu-exchange} are specified in Table 2 of~\cite{hamaker2018transition}.
 
\subsection{Transitions for $\mu$--involutions}

We conclude this section by proving transition equations for $\mu$--involution Schubert polynomials.
To do so, we require several definitions.
Let $\mu' = (\mu_1, \dots, \mu_k, 1)$ be the composition obtained from appending $1$ at the end of $\mu$.
Define $\tau'$ by appending $n+1$ as a fixed point to $\tau$, so $\tau' \in \mI_{\mu'}$.
In the remainder of this section, we identify $\tau$ with $\tau'$ as they have the same set of atoms by Theorem~\ref{thm: mu involution atoms}.
This identification is required to obtain all the necessary Bruhat covers in the subsequent definitions, as demonstrated in Example~\ref{e: transition}.

For $\mu \vDash n$, $\tau \in \mI_\mu$, and $i,j$ not necessarily distinct, let
    \begin{align*}
        \Phi(\tau;i,j) = \{t^\mu_{a,b}(\tau') \neq \tau\, : \,  a \in \{i,j\}\}, \\
        \Psi(\tau;i,j) = \{t^\mu_{a,b}(\tau') \neq \tau \, : \,  b \in \{i,j\}\}.
    \end{align*}
We now state the transition equations for $\mu$--involution Schubert polynomials.
    \begin{thm}
    \label{t:mu-transition}
        Let $\tau\in \mI_\mu$.
        For $(i,j) \in \Cyc_\mu(\tau)$,
        \begin{align*}
            2^{- \delta(i,j)} (x_i+x_j)\fS_\tau^\mu =   \sum_{\sigma \in \Phi(\tau;i,j)}\fS_\sigma^\mu - \sum_{\sigma \in \Psi(\tau;i,j)}\fS_\sigma^\mu.
        \end{align*}
        
    \end{thm}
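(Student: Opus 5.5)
The plan is to reduce to Monk's transition rule (Theorem~\ref{t:monk}) applied atom by atom, mirroring the proof of Theorem~\ref{t:inv-monk} in~\cite{hamaker2018transition}, with cancellation controlled by the $\mu$--exchange property. Write $\fS^\mu_\tau = \sum_{v \in \mA_\mu(\tau)} \fS_v$. Throughout we work with $\tau'$ in place of $\tau$, so that transpositions $t_{a,n+1}$ are available.

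\textbf{Case $i = j$.} Here $\delta(i,j) = 1$, so the left side is $x_i \fS^\mu_\tau$. Monk's transition gives
\[
x_i \fS_v = \sum_{i<s,\ v \lessdot vt_{is}} \fS_{vt_{is}} - \sum_{h<i,\ v\lessdot vt_{hi}} \fS_{vt_{hi}}.
\]
Summing over $v \in \mA_\mu(\tau)$ produces a signed multiset of permutations $w = vt_{ab}$ with $v \lessdot w$. These split into two classes.
\begin{itemize}
\item[(a)] Those $w$ that are $\mu$--atoms of some $\sigma$. By Lemma~\ref{l:mu-bruhat-cover} and Definition~\ref{d:mu-bruhat-cover}, $\sigma = t^\mu_{ab}(\tau)$, and then Theorem~\ref{t:mu-atom-transposition} says the collection of such $w$ for fixed $\sigma$ is exactly $\mA_\mu(\sigma)$, each appearing once. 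This yields $\fS^\mu_\sigma$ with sign $+$ if $a = i$ (so $\sigma \in \Phi$) and $-$ if $b = i$ (so $\sigma \in \Psi$).
\item[(b)] Those $w$ that are not $\mu$--atoms. By Corollary~\ref{c:strong-mu-exchange}, each such $w$ covers exactly two $\mu$--atoms $v, v'$ of $\tau$, with $w = vt_{ij} = v't_{i'j'}$. Its positions lie in one block $B$, with $i' \in \{j, y_\tau(j)\}$ and $j' \in \{i, y_\tau(i)\}$.
\end{itemize}

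For class (b) I must show the two occurrences carry opposite signs, that is, one involves $i$ as the smaller index and the other as the larger, so they cancel. When $(i,i)$ is a fixed point, $y_\tau(i) = i$, so the appropriate rows of Table~2 of~\cite{hamaker2018transition} apply within $B$. I expect this is literally the corresponding cancellation in the involution proof, restricted to the block via Theorem~\ref{thm: mu involution atoms}.

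\textbf{Case $i \neq j$.} Sum the Monk expansions for $x_i$ and $x_j$ over all atoms. Class (a) terms again assemble into $\fS^\mu_\sigma$ for $\sigma \in \Phi(\tau;i,j)$ or $\Psi(\tau;i,j)$. One must check that each $\sigma$ arises from only one of the two variables, or else that its contributions from $x_i$ and $x_j$ do not double count. Uniqueness of the transposition in Lemma~\ref{l:mu-bruhat-cover} and Theorem~\ref{t:mu-atom-transposition} should give this, since a given cover $\pi \lessdot^\mu \sigma$ has atoms related by a single $t_{ab}$ per atom.

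For class (b) in this case, Corollary~\ref{c:strong-mu-exchange} shows the exchanged indices lie in $\{i,j\} = \{y_\tau(i), i\}$. So both occurrences of each non-atom $w$ occur in the combined expansion of $x_i + x_j$, and I must verify they have opposite signs.

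\textbf{Main obstacle.} The hard step is the sign check for class (b), combined with confirming that cross-block transpositions never produce non-atoms. The latter follows from the first claim of Corollary~\ref{c:strong-mu-exchange}. The former reduces, via the block restriction $v|_B \in \mA^{-1}(\tau|_B)$, to the involution case. There I would invoke the cancellation lemma underlying Theorem~\ref{t:inv-monk}, namely~\cite{hamaker2018transition}*{Thm.~3.23(b)} with its table, and check case by case that the pair $(i,j),(i',j')$ always has the cycle element as the smaller index in one and the larger in the other.
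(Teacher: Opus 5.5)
Your strategy is the one the paper uses. You expand each $\fS_v$, $v\in\mA_\mu(\tau)$, by Theorem~\ref{t:monk}, assemble the terms that are $\mu$--atoms into $\fS^\mu_\sigma$ via Lemma~\ref{l:mu-bruhat-cover} and Theorem~\ref{t:mu-atom-transposition}, and cancel the remaining terms with Corollary~\ref{c:strong-mu-exchange}.

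The step you call the main obstacle is already settled by that corollary exactly as you quote it, and this is how the paper finishes. Suppose a non-atom $w$ enters with a plus sign, say $w=vt_{ab}$ with $a<b$ and $a\in\{i,j\}$. The corollary gives the other atom $v'=wt_{a'b'}\in\mA_\mu(\tau)$ with $a'<b'$ and $b'\in\{a,y_\tau(a)\}=\{i,j\}$, so $w$ also enters with a minus sign through $v'$. Moreover $a'\notin\{i,j\}$: otherwise $\{a',b'\}=\{j,i\}$ and $v'\lessdot v't_{ji}$, which is impossible because $i$ precedes $j$ in every inverse $\mu$--atom. Minus-sign entries are handled symmetrically, and the uniqueness in the corollary excludes further entries. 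So you need no case check against Table~2 of \cite{hamaker2018transition}. You also need not separate $i=j$ from $i\neq j$, since $2^{-\delta(i,j)}(x_i+x_j)=x_i$ when $i=j$.

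What is genuinely missing is multiplicity-freeness for the terms that \emph{are} $\mu$--atoms. You say each atom of $\sigma$ appears exactly once, citing Lemma~\ref{l:mu-bruhat-cover} and Theorem~\ref{t:mu-atom-transposition}, but neither gives this. The lemma says that one fixed $t_{ab}$ applied to different atoms of $\tau$ leads to the same $\sigma$, and the theorem is only an equality of sets. Nothing you cite rules out $u=v_1t_{ab}=v_2t_{a'b'}$ with $v_1\neq v_2$ in $\mA_\mu(\tau)$ and a cycle element in the same slot of both transpositions. Such a $u$ would get coefficient $2$ on the left and $1$ on the right. For non-atoms the uniqueness in Corollary~\ref{c:strong-mu-exchange} handles this, but not for atoms.

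The paper proves separately that the plus and minus Monk terms are multiplicity-free, by checking which subwords on the swapped values can occur in inverse $\mu$--atoms. For instance, writing $v_1,v_2$ for inverse atoms, $t_{ir}v_1=t_{jr}v_2$ with $r>i>j$ would force $j$ to precede $i$ in $v_2$. You need an argument of this kind. It must also exclude same-element coincidences such as $t_{ir}v_1=t_{is}v_2$ with $r\neq s$. These fall to the same subword analysis: the relative order of two values can vary among inverse $\mu$--atoms only when their cycles are nested in one block, and the two values of the outer cycle are never separated by those of the inner one.
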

    
    \begin{proof}
        We first define the following two sets of permutations:
        \begin{align*}
            A := \bigcup_{v \in \mA_\mu(\tau)} \{u \in S_n \, : \, v \lessdot u, u = vt_{ir} \text{ or }u = vt_{jr}\},\\
            C := \bigcup_{v \in \mA_\mu(\tau)} \{u \in S_n \, : \, v \lessdot u, u = vt_{ri} \text{ or }u = vt_{rj}\}.
        \end{align*}
        Viewed as multisets, there is no multiplicity in $A$ or $C$.
        To see this for $A$, given $v_1,v_2 \in \mA_\mu^{-1}(\tau)$ and any $r$, $t_{ir}v_1 = t_{jr}v_2$ cannot occur as $i$ must appear before $j$ in any inverse atom of $\tau$.
        Therefore any multiplicity is of the form $t_{ir}v_1 = t_{js}v_2$ where $r \neq s$.
        In this case, the subword formed by $\{i,j,r,s\}$ must be $irsj$ in $v_1$ and $rijs$ in $v_2$. However, since $r>i>j$, $r$ cannot appear to the left of $i,j$, so $rijs$ cannot be a subword in any inverse atom, a contradiction.
        A similar argument holds for $C$.
        
        By Theorem~\ref{t:monk}, the left hand side expands as
        \begin{align*}
            2^{-\delta(i,j)} (x_i+x_j)\fS_\tau^\mu &= 2^{-\delta(i,j)}\sum_{v \in \mA_\mu(\tau)}(x_i+x_j) \fS_v =  \sum_{u \in A} \fS_u - \sum_{u \in C} \fS_u.
        \end{align*}
        Now define $A' := \{u \in \mA_\mu(\sigma) : \sigma \in \Phi(\tau;i,j)\}$ and $C' := \{w \in \mA_\mu(\sigma) : \sigma \in \Psi(\tau;i,j)\}$.
        Note $A' \subseteq  A$ and $C' \subseteq C$ by Theorem~\ref{t:mu-atom-transposition}.
        Then the right hand side is
        \[
        \sum_{u \in A'} \fS_u - \sum_{w \in C'} \fS_w.
        \]
        Then the result will follow by showing $A \setminus A'  = C \setminus C'$.
        
        Let $w \in A \setminus A'$.
        By the definition of $A$, there exists $v \in \mA_\mu(\tau)$ so that $w = vt_{ab}$ with $a < b$ and $a \in \{i,j\}$.
        By Corollary~\ref{c:strong-mu-exchange}, we see $a,b$ are in the same block of $\tau$, hence $y_\tau(a)$ and $y_\tau(b)$ as well.
        Furthermore, the result guarantees there exists $a'<b'$ with $u = wt_{a'b'}$ so that $b' \in \{i,j\}$.
        Therefore, $w \in C \setminus C'$.
        By a symmetric argument, every $w \in C \setminus C'$ is also in $A \setminus A'$, so the result follows.
    \end{proof}

\begin{cor}
    \label{c:two-block-transition}
    The conclusion of Theorem~\ref{t:mu-transition} also holds for $(i,i)$ when $i$ is in a block of size 2.
\end{cor}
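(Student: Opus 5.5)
The plan is to rerun the proof of Theorem~\ref{t:mu-transition} with $i=j$ and to show that the step which used Corollary~\ref{c:strong-mu-exchange} is now vacuous. Let the block of $\tau$ containing $i$ be $B=\{i,k\}$. Since $(i,i)\in\Cyc_\mu(\tau)$ and $|B|=2$, the element $k$ is also a fixed point, so $B$ is the increasing word on $\{i,k\}$. In this case $\delta(i,i)=1$, so the left-hand side is $\tfrac12(2x_i)\fS^\mu_\tau=x_i\fS^\mu_\tau$. By Theorem~\ref{t:monk},
\[
x_i\fS^\mu_\tau=\sum_{u\in A}\fS_u-\sum_{u\in C}\fS_u ,
\]
where $A$ is the set of covers $u=vt_{ir}$ and $C$ the set of covers $u=vt_{ri}$, with $v$ ranging over $\mA_\mu(\tau)$. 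Here $A$ and $C$ are multiplicity-free, because for a fixed $v$ the transposition $t_{ir}$ determines $u$. If two atoms gave the same $u=v_1t_{ir}=v_2t_{is}$ with $r\neq s$, I would rule this out with the inverse-atom subword argument from the theorem's proof, which only gets easier since $j=i$. Define $A'$ and $C'$ as before. Then $A'\subseteq A$ and $C'\subseteq C$ by Theorem~\ref{t:mu-atom-transposition}, so it suffices to show $A\setminus A'=C\setminus C'$.

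The key step is to show that $A=A'$ and $C=C'$. Take $u=vt_{ab}\in A\cup C$ with $a<b$ and one of $a,b$ equal to $i$, and suppose $u$ is not a $\mu$--atom. By Corollary~\ref{c:strong-mu-exchange}, $a$ and $b$ must lie in the same block of $\tau$, which forces $\{a,b\}=\{i,k\}$. By Theorem~\ref{thm: mu involution atoms}, the inverse atom $v^{-1}$ restricted to $B$ is the unique inverse block atom of the increasing block, namely the increasing word on $\{i,k\}$. Since $u$ covers $v$, the swap puts the larger of $i,k$ first. The result is the decreasing word on $\{i,k\}$, which is precisely the inverse block atom of the block involution whose only cycle is the $2$--cycle on $\{i,k\}$. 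Applying Theorem~\ref{thm: mu involution atoms} again, $u$ is a $\mu$--atom of the $\mu$--involution $\sigma$ obtained from $\tau$ by replacing $B$ with this $2$--cycle. This contradicts the assumption, so every element of $A\cup C$ is a $\mu$--atom.

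Each such $u$ is then an atom of $t^\mu_{ab}(\tau')=\sigma\neq\tau$, so $u\in A'$ or $u\in C'$ according to whether $a=i$ or $b=i$. Hence $A=A'$ and $C=C'$, and summing over atoms gives the claimed identity.

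The step I expect to need the most care is this final bookkeeping, namely passing from sums over $A'$ and $C'$ to sums over $\Phi(\tau;i,i)$ and $\Psi(\tau;i,i)$. One must check that no $\sigma$ has some atoms arising from $a=i$ and others from $b=i$, since then $\fS^\mu_\sigma$ would be split between the two sums. I would handle this with the well-definedness in Lemma~\ref{l:mu-bruhat-cover} together with a case check using Figures~\ref{f:nested-mu-covers} and~\ref{f:other-mu-covers}. These figures show that the cycle structure of $\sigma$ records whether $i$ moved up or down, so whether each $\sigma$ lies in $\Phi$ or in $\Psi$ is determined by $\sigma$ alone. The identification of $\tau$ with $\tau'$ is used exactly as in Theorem~\ref{t:mu-transition}, to capture the covers $t_{i,n+1}$.
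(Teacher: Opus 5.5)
Your proof rests on a misreading of the hypothesis. You assume $(i,i)\in\Cyc_\mu(\tau)$, which forces the size-$2$ block $B$ to be increasing. In that case, however, Theorem~\ref{t:mu-transition} already applies verbatim, since fixed points are cycles (compare the equation for $x_5\fS^\mu_\tau$ in Example~\ref{e: transition}). So you have only reproved a special case of the theorem, and the corollary would have no content.

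The word ``also'' refers to the other case: $i$ lies in a $2$--cycle $\{i,k\}$ that forms a block of size $2$, so $(i,i)\notin\Cyc_\mu(\tau)$. This is exactly how the corollary is used in Example~\ref{e: transition}, where it expands $x_2\fS^\mu_\tau$ and $x_7\fS^\mu_\tau$ for the block $72$. Your argument never treats this case. Its key step is in fact false there: every inverse atom restricts on $B$ to the decreasing word, not the increasing one.

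The missing idea is that a block of size $2$ imposes no constraint at all. Whether $B$ is $ik$ or $ki$, its unique inverse block atom is the word $B$ itself. This is also the concatenation of the inverse atoms of two singleton blocks. Let $\nu$ refine $\mu$ by splitting that block. Then $\mI_\mu=\mI_\nu$ as sets of permutations, and Theorem~\ref{thm: mu involution atoms} gives $\mA_\mu(\sigma)=\mA_\nu(\sigma)$ for every $\sigma$. Hence $\fS^\mu_\sigma=\fS^\nu_\sigma$. Moreover $t^\mu_{ab}=t^\nu_{ab}$, because Definition~\ref{d:mu-bruhat-cover} only sees atoms. So the sets $A,A',C,C'$ and $\Phi(\tau;i,i),\Psi(\tau;i,i)$ are unchanged. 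In $\nu$, $i$ is a singleton block, so $(i,i)\in\Cyc_\nu(\tau)$, and Theorem~\ref{t:mu-transition} applied to $\nu$ gives the claim. This is the paper's one-line proof.

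Your direct route can also be repaired. In the $2$--cycle case there is no cover $v\lessdot vt_{ab}$ with $\{a,b\}=\{i,k\}$, so Corollary~\ref{c:strong-mu-exchange} makes every element of $A\cup C$ a $\mu$--atom. You would then still need to check $A'\subseteq A$ and $C'\subseteq C$ separately. That is, no $\sigma\in\Phi(\tau;i,i)\cup\Psi(\tau;i,i)$ may have atoms arising from a transposition involving the partner $k$, which is excluded from the index sets here. The splitting argument gives this for free.
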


\begin{proof}
    We can split the block containing $i$ into two blocks of size one without modifying the sets $A,A',C,C'$ from the proof of Theorem~\ref{t:mu-transition}.
\end{proof}

\begin{exa}
    \label{e: transition}
    Let $\tau = 5|4613|72|8$, then $\Cyc_\mu(\tau) = \{(5,5),(4,1),(6,3),(7,2),(8,8)\}$. It's $\mu$-Bruhat covers are given by the following transpositions:
    \begin{align*}
        t_{12}^\mu(\tau) &= 5|4623|71|8, \quad t_{28}^\mu(\tau) = 5|4613|78|2, \quad t_{37}^\mu(\tau) = 5|4167|32|8, \quad t_{56}^\mu(\tau) = 6|4513|72|8,\\ 
        \quad t_{67}^\mu(\tau) &= 5|4713|62|8, \quad t_{78}^\mu(\tau) = 5|4613|82|7, \quad t_{13}^\mu(\tau) = t_{16}^\mu(\tau) = t_{46}^\mu(\tau) = 5|6431|72|8.
    \end{align*}
    Then by Theorem~\ref{t:mu-transition},
    \begin{align*}
        x_5 \cdot \fS_\tau^\mu &= \fS_{t_{56}^\mu(\tau)}^\mu\\
        (x_4 + x_1)\fS_\tau^\mu &= \fS_{t_{12}^\mu(\tau)}^\mu + \fS_{t_{13}^\mu(\tau)}^\mu\\
        (x_7 + x_2)\fS_\tau^\mu &= \fS_{t_{28}^\mu(\tau)}^\mu + \fS_{t_{78}^\mu(\tau)}^\mu - \fS_{t_{12}^\mu(\tau)}^\mu - \fS_{t_{37}^\mu(\tau)}^\mu - \fS_{t_{67}^\mu(\tau)}^\mu\\
        (x_6 + x_3)\fS_\tau^\mu &= \fS_{t_{37}^\mu(\tau)}^\mu + \fS_{t_{67}^\mu(\tau)}^\mu - \fS_{t_{13}^\mu(\tau)}^\mu - \fS_{t_{56}^\mu(\tau)}^\mu
    \end{align*}
    Since $\{7,2\}$ are in a block of size $2$, Corollary~\ref{c:two-block-transition} also gives
    \begin{align*}
        x_2 \cdot \fS_\tau^\mu &= \fS_{t_{28}^\mu(\tau)}^\mu - \fS_{t_{12}^\mu(\tau)}^\mu\\
        x_7 \cdot \fS_\tau^\mu &= \fS_{t_{78}^\mu(\tau)}^\mu - \fS_{t_{37}^\mu(\tau)}^\mu - \fS_{t_{67}^\mu(\tau)}^\mu
    \end{align*}
    However, since $\{4,6,1,3\}$ are not in a block of size $2$ and are not fixed points in their block, Theorem~\ref{t:mu-transition} does not apply to the polynomials $x_i \cdot \fS_\tau^\mu$ for $i \in \{4,6,1,3\}$. In fact, one can check that these polynomials are not even in the $\ZZ$ span of $\mu$-involution Schubert polynomials for $\mu = (1,4,2,1)$, so it is impossible to expand them in this case.
\end{exa}

\section{Future directions}

The results of this paper suggest several natural directions of further study, both for combinatorial and geometric aspects of the theory of $\mu$–involutions and their associated Schubert polynomials.

\subsection{Shellability of $\mu$–Bruhat order}

A natural question to ask is whether the $\mu$–Bruhat order is thin and/or shellable.
Thinness is a key structural property often appearing in posets arising from Coxeter theory and geometry, while shellability shows that the order complex is homotopic to a wedge of spheres.
The two together would provide a combinatorial description of the M\"obius function of the poset.
Such results would parallel the well-known properties of Bruhat order on permutations and on involutions~\cites{edelman1981bruhat,incitti2004bruhat}.

\subsection{$\mu$–involution Stanley symmetric functions}

Another natural direction is to define and study Stanley symmetric functions associated to $\mu$–involutions.
Given $\pi \in I_\mu$, one may define
\[
F^\mu_\pi = \sum_{w \in A_\mu(\pi)} F_w
\]
where $F_w$ is the classical Stanley symmetric function from~\cite{stanley1984number} associated to a permutation $w$.
By construction, the coefficient in $F^\mu_\pi$ of $x_1 \dots x_{\ell_\mu(\pi)}$ is $|\mR_\mu(\pi)|$, the number of reduced $\mu$--involution words for $\pi$.

This construction interpolates between two well-studied cases.
When $\mu=(1^n)$, we recover the usual Stanley symmetric functions, which are known to be Schur positive~\cite{edelman1987balanced}.
When $\mu=(n)$, we obtain the involution Stanley symmetric functions studied in~\cite{Hamaker2017SchurPA}, which are Schur-$P$ positive.

\begin{question}
    Can one define a notion of positivity interpolating between Schur and Schur-P positivity that the $F^\mu_\pi$ satisfy?
\end{question}

\subsection{Combinatorial formulas for $\mu$--involution Schubert polynomials}

Schubert polynomials exhibit rich combinatorial structure through their many combinatorial formulas, including the famous pipedream~\cite{bb93,bjs93,fk96} and bumpless pipedream~\cite{lls21,weigandt21} formulas.
An analogous pipedream formula for involution Schubert polynomials was also introduced in ~\cite{hamaker2022involution}.

\begin{question}
    Is there a pipedream like formula for $\mu$--involution Schubert polynomials generalizing these two formulas?
    Or a bumpless pipedream formula?
\end{question}

One possible approach is through the framework of co-transition or transition formulas, which have been successful in other settings.
However, preliminary computations indicate that the co-transition approach does not behave as cleanly for $\mu$–involutions as it does for permutations or involutions.
The crux of the strategy is to construct a sequence of transition equations where each $\Psi$ set is empty, eventually terminating with the base case $w_0$.
In fact, for the permutation and involution cases any such sequence with $n$ fixed will arrive at $w_0$.
The following example shows that such an approach requires some modification if it is to be applied for $\mu$--involutions.
\begin{exa}
    Let $\tau = 653|421|7$ with $\Cyc(\tau) = \{(6,3),(5,5),(4,2),(1,1),(7,7)\}$.
    Using Theorem~\ref{t:mu-transition}, we have the following sequence of products with $\Psi(\tau;i,j)$ empty:
    \begin{align*}
        x_2 \cdot \fS_\tau^\mu &= \fS_{t_{27}^\mu(\tau)}^\mu = \fS_{653|417|2}^\mu,\\
        (x_3 + x_6)\fS_{653|417|2}^\mu &= \fS_{763|421|5}^\mu + \fS_{647|321|5}^\mu,\\
        x_2 \cdot \fS_{763|421|5}^\mu &= \fS_{763|415|2}^\mu.
    \end{align*}
    For $\pi = 763|415|2$ and every $(i,j) \in \Cyc_\mu(\pi)$, the resulting transition equation either requires the auxiliary value $8$ (so $t_{i8}^\mu$ is applied) or has nonempty $\Psi(\pi; i ,j)$. 
\end{exa}

In particular, it is unclear whether the cotransition outline will terminate with $w_0$ starting with an arbitrary $\mu$--involution.
Therefore, the upward induction on $\ell^\mu$ does not have a clear base case.

\subsection{K-theoretic generalizations}

Finally, it is natural to ask whether the theory developed here extends to $K$–theory of Borel orbits in $Q_n$.
Grothendieck polynomials $\fG_w$ represent $K$--theory classes of Schubert varieties, so are the $K$-theoretic analogues of Schubert polynomials.
We define $K$–theoretic analogues of $\mu$–involution Schubert polynomials as a sum of certain Grothendieck polynomials.
For $\pi \in \mI_\mu$, the \newword{Hecke atoms} of $\pi$ are $\mathcal{B}_\mu(\pi) = \{w: w^{-1} \muprod w = \pi\}$.
The \newword{$\mu$--involution Grothendieck polynomial} indexed by $\pi$ is
\begin{align*}
    \fG_\pi^\mu = \sum_{w \in \mathcal{B}_\mu(\pi)} \fG_w.
\end{align*}
It is not clear whether these polynomials have any geometric significance.
Even in the involution case, this is unknown.
The powers of 2 required to obtain Wyser and Yong's representatives from involution Schubert polynomials do not have a neat translation to the $K$--theory setting~\cite{marberg2024some}.
We hope $\mu$--involution Grothendieck polynomials can shed some light on the challenge of describing $K$--theory classes for $O_n$--orbit closures combinatorially.

\bibliographystyle{alpha}
\bibliography{citation}{}
\end{document}